\def\noprint#1{}
\newtheorem{assumption}{Assumption}
\newcommand{\lambdamin}{\lambda_{\mbox{\rm\scriptsize{min}}}}
\newcommand{\R}{\mathbb{R}}
\newcommand{\cK}{{\cal K}}
\newcommand{\eps}{\epsilon}
\newcommand{\epsg}{\epsilon_g}
\newcommand{\epsH}{\epsilon_H}
\newcommand{\flow}{f_{\mbox{\rm\scriptsize low}}}
\newcommand{\csol}{c_{\mbox{\rm\scriptsize sol}}}
\newcommand{\jsol}{j_{\mbox{\rm\scriptsize sol}}}
\newcommand{\cnc}{c_{\mbox{\rm\scriptsize nc}}}
\newcommand{\call}{c_{\mbox{\rm\scriptsize all}}}
\newcommand{\Cmeo}{\mathcal{C}_{\mathrm{meo}}}
\newcommand{\jnc}{j_{\mbox{\rm\scriptsize nc}}}
\newcommand{\bH}{\bar{H}}
\newcommand{\cO}{{\cal O}}
\newcommand{\tcO}{\tilde{\mathcal O}}
\newcommand{\bX}{\bar{X}}
\newcommand{\hIcg}{\hat{J}}
\newcommand{\diag}{\mbox{\rm diag }}
\DeclarePairedDelimiter{\ceil}{\lceil}{\rceil}
\def\sjwresolved#1{}
\def\monresolved#1{}
\newcommand{\refer}[1]{#1}
\newcommand{\swmodify}[1]{#1}
\def\tto{\;{\lower 1pt \hbox{$\rightarrow$}}\kern -10pt
           \hbox{\raise 2.8pt \hbox{$\rightarrow$}}\;}
\title{A Log-Barrier Newton-CG Method 
for Bound Constrained Optimization with Complexity Guarantees%
  \thanks{Version of \today. Research supported by  NSF Awards IIS-1447449, 1628384, 1634597,
and 1740707; AFOSR Award FA9550-13-1-0138; Subcontracts 3F-30222 and 8F-30039
from Argonne National Laboratory; and Award N660011824020 from the DARPA Lagrange Program.}}
\author{Michael O'Neill and Stephen J. Wright}
\begin{document}

\maketitle

\begin{abstract}
We describe an algorithm based on a logarithmic barrier function,
Newton's method, and linear conjugate gradients that seeks an
approximate minimizer of a smooth function over the nonnegative
orthant. We develop a bound on the complexity of the approach, stated
in terms of the required accuracy and the cost of a single gradient
evaluation of the objective function and/or a matrix-vector
multiplication involving the Hessian of the objective. The approach
can be implemented without explicit calculation or storage of the
Hessian.
\end{abstract}



\section{Introduction} 
\label{sec:intro}

We consider the following constrained optimization problem:
\begin{equation} \label{eq:fdef}
\min f(x) \quad \mbox{subject to $x \geq 0$,}
\end{equation}
where $f : \R^n \rightarrow \R$ is a nonconvex function, twice
uniformly Lipschitz continuously differentiable in the interior of the
nonnegative orthant.  We assume that explicit storage of the Hessian
$\nabla^2 f(x)$ for $x>0$ is undesirable, but that Hessian-vector
products of the form $\nabla^2 f(x) v$ can be computed \swmodify{at
  any $x>0$} for arbitrary vectors $v$. Computational differentiation
techniques \cite{AGriewank_AWalther_2008} can be used to evaluate such
products at a cost that is a small multiple of the cost of evaluation
of the gradient $\nabla f$.

The problem \eqref{eq:fdef} is well studied, with numerous algorithms
being proposed over the years, based on such strategies as active set,
gradient projection, and Newton's method. Other possible approaches
include interior-point and barrier methods, which generate iterates
that remain strictly feasible. The primal log-barrier method minimizes
the log-barrier function
\begin{equation} \label{eq:logbarrierdef}
\phi_\mu(x) = f(x) - \mu \sum_{i=1}^n \log(x_i),
\end{equation}
for some decreasing sequence of positive scalars $\mu$
\cite{AVFiacco_GPMcCormick_1990}. The function $\phi_\mu$ can be
minimized using Newton's method with a line search strategy that
maintains strict positivity of the components of $x$ as well as
ensuring sufficient decrease at each iteration.


Our goal in this paper is to design and analyze a method with
attractive worst-case complexity guarantees comparable to those that
have been attained recently for unconstrained minimization of smooth
nonconvex functions. The algorithm we describe in this paper combines
the primal log-barrier formulation \eqref{eq:logbarrierdef} with the
Newton-Conjugate-Gradient (``Newton-CG'') algorithm of
\cite{CWRoyer_MONeill_SJWright_2019}.  We minimize the log-barrier
function $\phi_\mu$ for only a single value of $\mu$, chosen
judiciously to ensure that its approximate minimizer coincides with an
approximate solution to \eqref{eq:fdef} that satisfies our accuracy
criteria. The Newton-CG method applied to $\phi_\mu$ uses a
safeguarded version of the linear CG method to minimize a slightly
damped second-order Taylor series approximation of $\phi_\mu$ at each
iteration. In contrast to its application to unconstrained
optimization, the linear system is preconditioned to control the norm
of its coefficient matrix to ensure that the number of CG iterations
is bounded by a quantity that depends on the accuracy of the desired
solution.  The safeguarded CG method monitors its iterates for
evidence of indefiniteness in the Hessian, and outputs a direction of
negative curvature for this matrix if indefiniteness is detected. If
no indefiniteness is detected, this CG procedure finds an approximate
Newton step. In either case, we do a backtracking line search along
the chosen direction, and show that the decrease in $\phi_\mu$
\swmodify{at each step} is sufficient to place an overall bound on the
number of iterations, allowing worst-case complexity results to be
proved.

Although practical efficiency of the method is not our
main concern in this paper, we note that our method is a
``long-step'' interior-point method, of the kind that has been
useful in other settings.

The rest of this paper is organized as follows.
Section~\ref{sec:relatedworks} reviews related work, \swmodify{ puts
  our paper in context, and outlines our main result}. In
Section~\ref{sec:approxKKT} we derive a first- and second-order
approximate optimality condition for (\ref{eq:fdef}).
Section~\ref{sec:lbncg} describes our log-barrier Newton-CG algorithm,
while Section~\ref{sec:wcc} presents the worst-case complexity
analysis for the first- and second-order approximate KKT
conditions. Some conclusions appear in Section~\ref{sec:discussion}.

{\em Assumptions, Background, Notation.}  We assume the following
throughout, concerning smoothness and boundedness of $f$.
\refer{\begin{assumption} \label{assum:fC22}
The function $f$ is twice uniformly Lipschitz continuously differentiable
on an open neighborhood of the path of the iterates and trial points.
We denote by $L_g$ the Lipschtiz constant
for $\nabla f$ and $L_H$ the Lipschitz constant for $\nabla^2 f$
on this set.
\end{assumption}
\begin{assumption} \label{assum:flow}
The function $f$ is bounded below by $\flow$.
\end{assumption}
\begin{assumption} \label{assum:boundedgH}
The iterates $\{x^k\}$ satisfy,
\[
\|\nabla f(x^k)\| \leq U_g,
\quad \|\nabla^2 f(x^k)\| \leq U_H,
\]
for some scalars $U_g > 0$ and
$U_H > 0$.
\end{assumption}}
(Here and throughout we use $\|\cdot\|$ to denote the Euclidean norm,
or its induced norm on matrices.)  We observe that $U_H$ is a
Lipschitz constant for the gradient of $f$.

\swmodify{For any $x$ and $y$ such that Assumption \ref{assum:fC22} is
  satisfied}, we have
\begin{align} \label{eq:fliphessian}
&\|\nabla f(y) - \nabla f(x) - \nabla^2 f(x) (y-x)\| \leq \frac12 L_H \|x-y\|^2, \\
&f(y) \leq f(x) + \nabla f(x)^\top(y - x) + \frac12 (y - x)^\top \nabla^2 f(x) (y - x)
+ \frac{1}{6} L_H \|x - y\|^3. \label{eq:f2smooth}
\end{align}

Order notation $\cO$ is used in its usual sense, whereas $\tcO$
represents $\cO$ with logarithmic \swmodify{factors} omitted.

We define $e = (1, \dotsc,1)^\top$ to be the vector of ones and $e_i
= (0,\dotsc,0,1,0,\dotsc,0)^\top$ to be the unit vector with 1 as the
$i$th component and zeros elsewhere. The $i$th component of a vector
$v$ is denoted by $v_i$ or $[v]_i$. Given a vector $x \in \R^n_+$
(where $\R^n_+$ is the nonnegative orthant), we denote by $X$ the
diagonal matrix formed by the components of $x$, by $\bar{x}$ the
vector whose components are $\min(x_i,1)$,\footnote{We use a threshold
  of $1$ for clarity of presentation. Any other positive value could
  be used instead, with minimal effect on the results.} and by
$\bar{X}$ the diagonal matrix formed from $\bar{x}$. That is,
\begin{equation} \label{eq:Xdef}
  X = \diag(x_1,x_2,\dotsc,x_n), \quad
  \bar{x} = \min (x,e), \quad
  \bX = \diag(\bar{x}_1,\bar{x}_2,\dotsc,\bar{x}_n).
  \end{equation}

Our algorithm seeks a point $x$ satisfying the following approximate
optimality conditions for (\ref{eq:fdef}):
\begin{subequations} \label{eq:epsKKT}
\begin{align}
x &> 0, \label{eq:xstrictfeas} \\
\nabla f(x) &\geq -\epsg e, \label{eq:epsgradpos} \\
\|\bX \nabla f(x) \|_\infty &\leq \epsg, \label{eq:epscompliment} \\
\bX \nabla^2 f(x) \bX &\succeq -\epsH I \label{eq:epspsd},
\end{align}
\end{subequations}
for small positive tolerances $\epsg$ and $\epsH$. The conditions
(\ref{eq:epscompliment}) and (\ref{eq:epspsd}) differ from the scaled
gradient and Hessian conditions used elsewhere, through the
substitution of the bounded matrix $\bX$ for $X$. The theoretical
basis for these conditions as well as their relation to those used in
previous works is presented in Section~\ref{sec:approxKKT}.

\section{Related Work} \label{sec:relatedworks}

There is considerable recent work on algorithms for {\em
  unconstrained} smooth nonconvex optimization that have optimal
worst-case iteration complexity for finding points that satisfy
approximate first- and second-order optimality conditions.  When
applied to twice Lipschitz continuously differentiable functions,
classical Newton-trust-region schemes
\cite{ARConn_NIMGould_PhLToint_2000} require at most
$\cO\left(\max\left\{\epsg^{-2} \epsH^{-1}, \epsH^{-3}
\right\}\right)$ iterations \cite{CCartis_NIMGould_PLToint_2012a} to
find a point satisfying
\begin{equation} \label{eq:unconstrained2on}
\|\nabla f(x)\| \leq \epsg\; \mbox{ and } \;
\lambdamin (\nabla^2 f(x)) \geq -\epsH.
\end{equation}
For this class of problems, the optimal iteration complexity for
finding a \refer{second-order optimal point  is
  $\cO\left(\max\left\{\epsg^{-3/2},\epsH^{-3}\right\}\right)$}
\cite{YCarmon_JCDuchi_OHinder_ASidford_2017b,CCartis_NIMGould_PhLToint_2011c,CCartis_NIMGould_PhLToint_2018a}.
This iteration complexity was first achieved by cubic regularization
of Newton's method \cite{YuNesterov_BTPolyak_2006}.
Numerous other algorithms have also been proposed that match this
iteration bound; see for example
\cite{EGBirgin_JMMartinez_2017,CCartis_NIMGould_PhLToint_2011a,FECurtis_DPRobinson_MSamadi_2017,FECurtis_DPRobinson_MSamadi_2018a,JMMartinez_MRaydan_2017}.

Some works also account for the computational cost of each iteration,
thus yielding a bound on the overall computational complexity.
\refer{A number of works have focused on efficiently computing a
  solution to the cubically regularized subproblem, either through
  direct matrix factorization techniques
  \cite{EGBirgin_JMMartinez_2019a, CCartis_NIMGould_PhLToint_2011a,
    YuNesterov_BTPolyak_2006} and/or Krylov subspace based methods
  \cite{CCartis_NIMGould_PhLToint_2011a, JPDussault_DOrban_2015}.
  These approaches yield a worst case operational complexity of
  $\cO(n \epsg^{-3/2})$ when $\epsH = \epsg^{1/2}$.}
Two independently proposed algorithms, respectively based on
adapting accelerated gradient to the nonconvex setting
\cite{YCarmon_JCDuchi_OHinder_ASidford_2018} and approximately solving
the cubic regularization subproblem
\cite{NAgarwal_ZAllenZhu_BBullins_EHazan_TMa_2017}, require $\tcO
(\epsg^{-7/4})$ operations (with high probability, showing dependency
only on $\epsg$) to find a point $x$ that satisfies
(\ref{eq:unconstrained2on}) when $\epsH = \epsg^{1/2}$.  The
difference of a factor of $\epsg^{-1/4}$ with the iteration complexity
bounds arises from the cost of computing a negative curvature
direction of $\nabla^2 f(x_k)$ and/or the cost of solving a linear
system. The probabilistic nature of the bound is due to the
introduction of randomness in the curvature estimation process.
A complexity bound of the same type was also established for
a variant of accelerated gradient based only on gradient calculations,
that periodically adds a random perturbation to the iterate when the
gradient norm is small~\cite{CJin_PNetrapalli_MIJordan_2018}.

In another line of work, \cite{CWRoyer_SJWright_2018} developed a
damped Newton algorithm which inexactly minimizes the Newton system by
the method of conjugate gradients and requires at most $\tcO
(\min\{n\epsg^{-3/2}, \epsg^{-7/4}\})$ operations to satisfy
(\ref{eq:unconstrained2on}), to high probability. For purposes of
computational complexity, this paper defines the unit of computation
to be one Hessian-vector product {\em or} one gradient evaluation. We
also adopt this definition here; it relies implicitly on the
observation from computational / algorithmic differentiation
\cite{AGriewank_AWalther_2008} that these two operations differ in
cost only by a modest factor, independent of the dimension $n$. In a
followup to \cite{CWRoyer_SJWright_2018}, the paper
\cite{CWRoyer_MONeill_SJWright_2019} built on techniques from
\cite{YCarmon_JCDuchi_OHinder_ASidford_2017a} to create a modified CG
method to solve the Newton system.  This algorithm, which is a
foundation of the method described in this paper, again finds a point
satisfying \eqref{eq:unconstrained2on} in $\tcO (\min\{n\epsg^{-3/2},
\epsg^{-7/4}\})$ operations, to high probability, and requires the
same number of operations to find an approximate first-order critical
point {\em deterministically}.

A number of algorithms have also been proposed for {\em constrained}
optimization problems that require at most $\cO(\max \{\epsg^{-3/2},
\epsH^{-3} \})$ iterations to find a point which satisfies some
first-order (and sometimes second-order) optimality
conditions. Although the optimality conditions vary between papers,
\swmodify{the works
  \cite{CCartis_NIMGould_PLToint_2012b,CCartis_NIMGould_PLToint_2015}
  achieve this iteration complexity bound for some first-order
  optimality condition by solving a constrained cubic regularization
  subproblem at each iteration.} \refer{These approaches have been
  greatly simplified in recent times for problems involving
  ``inexpensive'' convex constraints
  \cite{CCartis_NIMGould_PhLToint_2018b,CCartis_NIMGould_PhLToint_2019a}}.
A different proposal finds a first-order point in $\cO (\epsg^{-3/2}
)$ iterations for linear equality and bound constraints through the
use of an active set method \cite{EGBirgin_JMMartinez_2018}.  When
optimizing on a single face of the polytope, this method also uses a
cubic regularization model. However, these papers do not account for
the cost of solving the subproblem at each iteration, noting either
that this subproblem may be NP-hard, or suggesting that a simple
first-order, gradient-based method can solve it reliably.
\refer{Many other methods have been proposed for constrained
  optimization which have good worst-case iteration complexity results, such
  as two-phase methods
  \cite{EGBirgin_JMMartinez_SASantos_PLToint_2016,
    CCartis_NIMGould_PhLToint_2019b,
    FECurtis_DPRobinson_MSamadi_2018b}, an interior-point method
  \cite{OHinder_YYe_2018}, and augmented Lagrangian methods
  \cite{EGBirgin_JMMartinez_2019b,
    GNGrapiglia_YYuan_2019,YXie_SJWright_2019}.}

Turning to our bound-constrained problem \eqref{eq:fdef}, a
second-order interior-point method was proposed in
\cite{WBian_XChen_YYe_2015}. This method minimizes a preconditioned
second-order trust-region model at each iteration and finds a point
satisfying approximate second-order conditions in at most $\cO
(\epsg^{-3/2})$ iterations when $\epsH = \epsg^{1/2}$.  However, the
first-order conditions are strictly weaker than those used in the
current work as they consist only of feasibility of $x$ along with a
scaled gradient condition that is an ``unbounded'' version of
(\ref{eq:epscompliment}) in which $\bX$ is replaced by $X$.
\refer{Without additional assumptions on $f$, the absence of condition
  (\ref{eq:epsgradpos}) in the optimality conditions implies that
  sequences of (strictly feasible) points that satisfy the scaled
  gradient condition may not approach KKT points as $\epsg$ approaches
  0; see \cite[Section~2]{GHaeser_HLiu_YYe_2018} for a discussion of
  this issue.} Our approximate optimality conditions \eqref{eq:epsKKT}
here do not suffer from these issues, as we show in
Section~\ref{sec:approxKKT}.  In a follow up to
\cite{WBian_XChen_YYe_2015}, an interior-point method for linear
equality and bound constraints was described in
\cite{GHaeser_HLiu_YYe_2018}. This method, which also achieves an
iteration complexity of $\cO (\epsg^{-3/2})$ (when $\epsH =
\epsg^{1/2}$), applies a constrained second-order trust-region
algorithm to the log-barrier function, with a (potentially) small
trust-region radius. \swmodify{The authors of
  \cite{GHaeser_HLiu_YYe_2018} were more interested in iteration
  complexity than computational complexity, but we note that each of
  their subproblems requires evaluation of the Hessian (which in the
  worst case requires evaluation of $n$ Hessian-vector products, where
  the latter is one of our units of computational complexity),
  together with $\tcO(n^3)$ floating point operations associated with performing
  a bisection scheme to solve the subproblem. These considerations
  suggest an overall worst-case computational complexity of at least
  $\cO(n \epsg^{-3/2})$ for the algorithm of
  \cite{GHaeser_HLiu_YYe_2018}.}



In this paper, we adapt the Newton-CG method of
\cite{CWRoyer_MONeill_SJWright_2019} for unconstrained optimization to
the problem of minimizing the primal log-barrier function
\eqref{eq:logbarrierdef}, for a small, fixed value of $\mu$.  We
target the optimality conditions (\ref{eq:epsKKT}), which avoid
enforcing tighter conditions on Hessian and gradient components that
correspond to components of $x$ that are far from zero at optimality.
This change allows us to solve a preconditioned Newton system of
linear equations at each iteration in which the norm of the matrix can
be bounded by a constant independent of iteration number.  The Capped
CG method developed in \cite{CWRoyer_MONeill_SJWright_2019} is used to
solve this system, returning a useful search direction in a reasonable
number of iterations.  \sjwresolved{We needed a much much clearer
  statement of our ultimate results and contribution here and
  later. They were buried in the final few results and impossible to
  find except for the most dedicated reader. I have tried to dig them
  out, here and there, but may not be sufficiently clear yet. I am
  trying to remember to highlight my changes in this regard.}  When
$\epsH = \epsg^{1/2}$, our algorithm finds a point satisfying
(\ref{eq:epsKKT}) in \swmodify{$\tcO(n \epsg^{-1/2} + \epsg^{-3/2})$
  iterations (Theorem~\ref{thm:wccits}).  The computational
  complexity, in terms of gradient evaluations/Hessian vector
  products, is $\tcO(n \epsg^{-3/4} + \epsg^{-7/4})$ for large values
  of $n$, and $\tcO(n \epsg^{-3/2})$ for smaller $n$; see
  Corollary~\ref{coro:wcc2Hv} and the comments following this
  result. The appearance of $n$ in our complexity expressions is an
  apparently unavoidable consequence of using log-barrier methodology,
  along with making the mildest possible assumptions on the problem
  \eqref{eq:fdef} and the algorithm.  \monresolved{Slightly modified
    this sentence. Not positive I love it. Original is commented out.}
  For example, we do not assume a bounded feasible set or a particular
  choice of starting point (as in \cite{GHaeser_HLiu_YYe_2018}), and
  we do not assume any specific rate of growth of $f$ as $x$ moves
  away from the solution set.  Still, our computational complexity
  rates match (for small $n$) or improve on (for large $n$)  those in
  \cite{GHaeser_HLiu_YYe_2018}. Practically speaking,} our algorithm
has the appealing feature that it puts minimal restrictions on the
step size, allowing the line search to take steps that are much closer
to the boundary than the current iterate.


\section{Approximate Optimality Conditions} \label{sec:approxKKT}

We now discuss first- and second-order optimality criteria for
(\ref{eq:fdef}) in a form that can be related to the approximate
optimality criteria \eqref{eq:epsKKT} that are targeted by our
algorithm. We show that points satisfying these necessary conditions
are the limits of sequences of points that satisfy our approximate
criteria \eqref{eq:epsKKT}. We then compare our approximate criteria
with similar conditions that have been proposed previously, and argue
that ours are more appropriate.

\subsection{Deriving Approximate Optimality Conditions from Exact Conditions}

First-order conditions for $x$ to be a solution of \eqref{eq:fdef} are
that there exists a vector $s^* \in \R^n$ such that
\begin{equation} \label{eq:exactKKT}
  \nabla f(x) - s^* = 0, \quad (x,s^*) \ge 0, \quad x_i s^*_i=0 \;\;  \mbox{ for all } i=1,2,\dotsc n.
  \end{equation}
Our second-order condition is a modified version of the condition
derived in \cite{RAndreani_GHaeser_ARamos_PJSSilva_2017}. It requires
the existence of a vector $\theta^*$ such that
\begin{equation} \label{eq:secondorderexact}
  \nabla^2 f(x) + \diag (\theta^*) \succeq 0, \quad
 \theta^* \ge 0, \quad 
x_i^2 \theta_i^* = 0 \; \; \mbox{ for all } i=1,2,\dotsc n.
  \end{equation}
This is equivalent to a ``weak'' form of second-order necessary
conditions for (\ref{eq:fdef}), namely $[\nabla^2
  f(x)]_{\mathcal{I}(x)\mathcal{I}(x)} \succeq 0$, where
$\mathcal{I}(x) := \{i \, | \, x_i > 0 \}$. The more satisfactory
``strong'' second-order conditions require testing that $d^\top \nabla^2
f(x) d \ge 0$ for all $d$ in the cone defined by
\[
\{ d \in \R^n \, | \, d_i=0  \; \mbox{when $x_i=0$, $[\nabla f(x)]_i>0$}; \; d_i \ge 0 \; \mbox{when $x_i=0$, $[\nabla f(x)]_i=0$} \}.
\]
This is known to be an NP-hard problem
\cite{MNouiehed_JDLee_MRazaviyayn_2018}.

The following result shows that a \refer{local minimizer $x^*$} can be expressed
in terms of the limit of sequences that satisfy approximate forms of
these two optimality conditions.
\begin{theorem} \label{thm:cappedKKTlim}
Let $f$ be twice continuously differentiable on the interior of
$\R^n_+$. Let $x^*$ be a local solution of (\ref{eq:fdef}). Then there
exists a sequence of approximate solutions $\{x^k\}$ with $x^k >0$;
sequences of approximate Lagrange multipliers $\{s^k\}$ and
$\{\theta^k\}$, with $s^k \ge 0$ and $\theta^k \ge 0$; and a sequence
of scalars $\{\delta_k\}$ with $\delta_k >0$ and $\delta_k \rightarrow
0$ such that the following conditions hold:
\begin{subequations}
\begin{alignat}{2}
x^k > 0 \mbox{ for all k and } x^k & \rightarrow x^*, && \label{eq:cappedlimx} \\
\nabla f(x^k) - s^k & \rightarrow 0, && \label{eq:cappedlimlagrangian} \\
\min\{x_i^k, 1\} s_i^k & \rightarrow 0 \; \; && \mbox{\rm for all } i=1,2,\dotsc n,
\label{eq:cappedlimcompliment} \\
 \nabla^2 f(x^k) + \diag(\theta^k) + \delta_k I 
 & \succeq 0,
 \label{eq:cappedlim2order} \\
\min \{x_i^k, 1\}^2 \theta_i^k & \rightarrow 0 \; \; && \mbox{\rm for all } i=1,2,\dotsc n.
\label{eq:cappedlimthetacomp}
\end{alignat}
\end{subequations}
\end{theorem}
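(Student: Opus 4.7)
The plan is to construct $\{x^k\}$ as interior minimizers of a sequence of localized, quartically regularized log-barrier subproblems, and then read off $(s^k, \theta^k, \delta_k)$ directly from their first- and second-order optimality conditions. First, fix $\epsilon > 0$ small enough that $x^*$ minimizes $f$ on $B_\epsilon := \{x \ge 0 : \|x - x^*\| \le \epsilon\}$. For a sequence $\mu_k \downarrow 0$, define $x^k$ to be a minimizer of
\[
\psi_k(x) := \phi_{\mu_k}(x) + \tfrac{1}{4}\|x - x^*\|^4 \quad \mbox{subject to } x > 0, \, \|x - x^*\| \le \epsilon.
\]
Existence of $x^k$ follows because $\psi_k \to +\infty$ as any component $x_i \to 0^+$, so the infimum over $B_\epsilon$ is attained in the interior of $\R^n_+$. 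The quartic penalty plays a twofold role: it is strictly convex in $x - x^*$, singling out $x^*$ among the minimizers of $f$ on $B_\epsilon$, and its Hessian $\|x - x^*\|^2 I + 2(x - x^*)(x - x^*)^\top$ vanishes as $x \to x^*$, contributing only $\cO(\|x^k - x^*\|^2)$ to the second-order bound.

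Next I would show $x^k \to x^*$ by a standard variational log-barrier argument. For any strictly feasible $y > 0$, the comparison $\psi_k(x^k) \le \psi_k(y)$ together with $\mu_k \log y_i \to 0$ gives $\limsup_k \psi_k(x^k) \le f(y) + \tfrac{1}{4}\|y - x^*\|^4$. For the matching lower bound, the estimate $-\log x^k_i \ge -\log(\max(x^k_i, 1))$ and the boundedness of $x^k$ on $B_\epsilon$ imply $\liminf_k[-\mu_k \sum_i \log x^k_i] \ge 0$, so $\liminf_k \psi_k(x^k) \ge f(\tilde x) + \tfrac{1}{4}\|\tilde x - x^*\|^4$ along any subsequential limit $\tilde x$. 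Approximating arbitrary $y' \in B_\epsilon$ from the strict interior and using continuity of $f$, one concludes that $\tilde x$ minimizes $f + \tfrac{1}{4}\|\cdot - x^*\|^4$ on $B_\epsilon$. Strict convexity of the penalty and the choice of $\epsilon$ then force $\tilde x = x^*$, whence $x^k \to x^*$; in particular, eventually $\|x^k - x^*\| < \epsilon$ and $x^k$ is an unconstrained interior minimizer of $\psi_k$.

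From this interior stationarity I would set $s^k := \mu_k (X^k)^{-1} e$ and $\theta^k_i := \mu_k / (x^k_i)^2$, both nonnegative. The condition $\nabla \psi_k(x^k) = 0$ reads
\[
\nabla f(x^k) - s^k + \|x^k - x^*\|^2 (x^k - x^*) = 0,
\]
which establishes \eqref{eq:cappedlimlagrangian} since $x^k \to x^*$. The identities $x^k_i s^k_i = \mu_k$ and $(x^k_i)^2 \theta^k_i = \mu_k$ yield \eqref{eq:cappedlimcompliment} and \eqref{eq:cappedlimthetacomp}, because capping $x^k_i$ at $1$ only decreases these products. The second-order condition $\nabla^2 \psi_k(x^k) \succeq 0$ expands to
\[
\nabla^2 f(x^k) + \diag(\theta^k) + \|x^k - x^*\|^2 I + 2(x^k - x^*)(x^k - x^*)^\top \succeq 0,
\]
and bounding the rank-one term by $2\|x^k - x^*\|^2 I$ gives \eqref{eq:cappedlim2order} with $\delta_k := 3\|x^k - x^*\|^2 \to 0$ (augmented by a vanishing positive shift if needed to keep $\delta_k$ strictly positive).

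The main obstacle is balancing two competing demands on the regularizer: $x^*$ need not be an isolated local minimizer, so the penalty must be strong enough to force $x^k \to x^*$ specifically; yet its contribution to the second-order condition must vanish in order that $\delta_k \to 0$. A plain quadratic penalty achieves the first requirement but leaves a constant contribution to $\delta_k$; the quartic penalty threads both needs because it is strictly convex in $x - x^*$ while its Hessian is of order $\|x - x^*\|^2$ near $x^*$. Some care is also required in the lim inf step to handle components $x^*_i = 0$, but the nonnegativity of $-\mu_k \log x^k_i$ on $\{x^k_i < 1\}$ absorbs the awkward term uniformly.
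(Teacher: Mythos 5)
Your construction is correct and is essentially the argument behind the result the paper relies on: the paper proves this theorem by invoking Theorem~1 of Haeser--Liu--Ye and observing that $\min\{x_i^k,1\}\,s_i^k \le x_i^k s_i^k$ and $\min\{x_i^k,1\}^2\theta_i^k \le (x_i^k)^2\theta_i^k$, and your log-barrier-plus-quartic-localization subproblem (with $s^k=\mu_k (X^k)^{-1}e$, $\theta^k_i=\mu_k/(x^k_i)^2$, $\delta_k = 3\|x^k-x^*\|^2$ up to a vanishing positive shift) is precisely the standard proof of that cited result, with the same capping observation built in. The only point worth flagging is minor: the $\liminf$/density step uses continuity of $f$ up to the boundary of $\R^n_+$ on the ball $B_\epsilon$, which is implicit in $x^*$ being a local solution of \eqref{eq:fdef} but is slightly stronger than the stated hypothesis of twice continuous differentiability on the interior.
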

\refer{The proof of this result follows directly from that
of \cite[Theorem~1]{GHaeser_HLiu_YYe_2018} by noting
that $\min\{x_i^k, 1\} s_i^k \leq x_i^k s_i^k$ and
$\min \{x_i^k, 1\}^2\theta_i^k \leq (x_i^k)^2 \theta_i^k$ trivially hold
for all $i$ and $k$.}

Theorem~\ref{thm:cappedKKTlim} suggests that we should declare $x>0$ to
be an approximate interior solution of (\ref{eq:fdef}) when there
exist $s \in \mathbb{R}^n_+$ and $\theta \in \mathbb{R}^n_+$ such that
\begin{subequations} \label{eq:epscappedKKT}
\begin{align}
\left\|\nabla f(x) - s \right\|_\infty & \leq \epsg, \label{eq:epscappedKKT2} \\
\|\bX s\|_\infty & \leq \epsg, \label{eq:epscappedKKT3} \\
\nabla^2 f(x) + \diag (\theta) + \epsH I
& \succeq 0, \label{eq:epscappedKKT4} \\
\|\bX^2 \theta\|_\infty & \leq \epsH. \label{eq:epscappedKKT5} 
\end{align}
\end{subequations}
We will now describe the connection between our approximate optimality
conditions (\ref{eq:epsKKT}) and the conditions
(\ref{eq:epscappedKKT}).

\begin{theorem}
Let $x$ be a point satisfying (\ref{eq:epsKKT}). Then there exist $s
\in \mathbb{R}^n_+$ and $\theta \in \mathbb{R}^n_+$ such that
(\ref{eq:epscappedKKT}) holds at $x$.
\end{theorem}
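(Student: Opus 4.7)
The plan is to construct the multipliers $s$ and $\theta$ explicitly from $\nabla f(x)$ and $\bX$, and then verify each of the four conditions in \eqref{eq:epscappedKKT} directly.

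For the first-order part, I would set $s_i := \max\{[\nabla f(x)]_i, 0\}$ for all $i$, so that $s \in \mathbb{R}^n_+$ by construction. To verify \eqref{eq:epscappedKKT2}, I would split into cases: when $[\nabla f(x)]_i \geq 0$ we have $[\nabla f(x) - s]_i = 0$, and when $[\nabla f(x)]_i < 0$ we have $[\nabla f(x) - s]_i = [\nabla f(x)]_i \in [-\epsg, 0)$ by \eqref{eq:epsgradpos}. For \eqref{eq:epscappedKKT3}, note $[\bX s]_i$ is either $0$ or equals $\bar{x}_i [\nabla f(x)]_i$, which is bounded in absolute value by $\epsg$ thanks to \eqref{eq:epscompliment}.

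For the second-order part, since $x > 0$ the matrix $\bX$ is invertible. Starting from \eqref{eq:epspsd}, namely $\bX \nabla^2 f(x) \bX + \epsH I \succeq 0$, I would pre- and post-multiply by $\bX^{-1}$ to obtain
\begin{equation*}
\nabla^2 f(x) + \epsH \bX^{-2} \succeq 0.
\end{equation*}
This motivates choosing $\theta_i := \epsH \left( \bar{x}_i^{-2} - 1 \right)$, so that $\diag(\theta) + \epsH I = \epsH \bX^{-2}$. Since $\bar{x}_i = \min\{x_i,1\} \in (0,1]$, we have $\bar{x}_i^{-2} \geq 1$, hence $\theta \in \mathbb{R}^n_+$. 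Substituting back gives exactly \eqref{eq:epscappedKKT4}. Finally, for \eqref{eq:epscappedKKT5}, a direct computation yields $\bar{x}_i^2 \theta_i = \epsH(1 - \bar{x}_i^2) \in [0, \epsH]$, so $\|\bX^2 \theta\|_\infty \leq \epsH$.

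There is no real obstacle here; the only place where some care is needed is recognizing that \eqref{eq:epspsd} can be ``unscaled'' via the invertibility of $\bX$ (which relies crucially on \eqref{eq:xstrictfeas}) to get a condition on $\nabla^2 f(x)$ itself plus a diagonal perturbation, and that the resulting perturbation $\epsH \bX^{-2}$ conveniently decomposes as $\diag(\theta) + \epsH I$ with $\theta \geq 0$ precisely because the capping operation forces $\bar{x}_i \leq 1$. This is also why the approximate condition \eqref{eq:epspsd} uses $\bX$ rather than $X$.
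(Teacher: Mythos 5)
Your proposal is correct and follows essentially the same route as the paper: take $s$ to be the positive part of $\nabla f(x)$ and obtain $\theta$ by unscaling \eqref{eq:epspsd} through the invertible matrix $\bX$. The only (immaterial) difference is that you set $\theta_i = \epsH(\bar{x}_i^{-2}-1)$ whereas the paper uses $\theta_i = \epsH \bar{x}_i^{-2}$ and absorbs the extra $\epsH I$ into the slack of \eqref{eq:epscappedKKT4}; both choices satisfy \eqref{eq:epscappedKKT4} and \eqref{eq:epscappedKKT5}.
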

\begin{proof}
  Let $s_i := \max\{0, [\nabla
  f(x)]_i\}$ for $i=1\dotsc n$, so that $s \in \R^n_+$ and, by direct
substitution, we have (\ref{eq:epscappedKKT2}) and
(\ref{eq:epscappedKKT3}). Our second-order condition
\eqref{eq:epspsd} is that 
\[
d^\top \left( \bX \nabla^2 f(x)\bX + \epsH I \right)d \geq 0, \quad
\mbox{for all $d \in \R^n$.}
\]
Since $\bX^{-1}$ exists and is positive definite, we have
\[
d^\top \left(\nabla^2 f(x) + \sum_{i=1}^n \frac{\epsH}{\min\{x_i,1\}^2} e_i e_i^\top\right) d
\geq 0, \quad \mbox{for all $d \in \R^n$.} 
\]
Therefore, by choosing $\theta_i = \epsH/\min\{x_i, 1\}^2$ for all
$i=1,2, \dotsc n$, we have that $\theta \ge 0$ and that
(\ref{eq:epscappedKKT4}) and (\ref{eq:epscappedKKT5}) are both
satisfied.
\end{proof}

\subsection{Comparison with Previously Proposed Approximate Conditions}

The conditions (\ref{eq:exactKKT}) and (\ref{eq:secondorderexact})
directly motivate the approximate optimality
conditions for $x >0$ used in the interior-point method of
\cite{GHaeser_HLiu_YYe_2018}, which are
\begin{subequations} \label{eq:oldepsKKT}
\begin{align}
\nabla f(x) &\geq -\epsg e, \label{eq:oldgradpos} \\
\|X \nabla f(x)\|_\infty &\leq \epsg, \label{eq:oldepscompliment} \\
d^\top \left(X \nabla^2 f(x) X + \sqrt{\epsg} I\right) d &\geq 0. \label{eq:old2order}
\end{align}
\end{subequations}
\swmodify{The scaled first-order condition (\ref{eq:oldepscompliment})
  and scaled second-order condition (\ref{eq:old2order}) are commonly
  used optimality conditions for (\ref{eq:fdef})
  \cite{WBian_XChen_YYe_2015, XChen_FXu_YYe_2010}. However, these two
  conditions alone are insufficient to guarantee that a sequence of
  points that satisfies these conditions as $\epsg \to 0$ converges to
  a KKT point for $f$~\cite{GHaeser_HLiu_YYe_2018}. For this reason
  the condition $(\ref{eq:oldgradpos})$ is added
  in~\cite{GHaeser_HLiu_YYe_2018}, motivated by the first-order optimality
  conditions \eqref{eq:exactKKT}.}

These conditions can be overly stringent for coordinates $i$ in which
$x_i \gg 0$.  In this case, the complementarity condition
(\ref{eq:oldepscompliment}), requires $|[\nabla f(x)]_i|$ to be very
small. Similarly, (\ref{eq:old2order}) requires that the Hessian in
the subspace spanned by these coordinates can have only minimal
negative curvature.  Such requirements contrast sharply with the case
of unconstrained minimization.  In the limiting scenario in which all
of the coordinates of $x$ are far from the boundary, these approximate
first-order conditions are significantly harder to satisfy than in the
(equivalent) unconstrained formulation.


To remedy this situation, our approximate optimality conditions
\eqref{eq:epsKKT} contain scalings by $x_i$ only when $x_i \in (0,1]$.
  Our conditions thus interpolate between the bound-constrained case
  (when $x_i$ is small) and the unconstrained case (when $x_i$ is
  large) while also controlling the norm of the matrix used in our
  optimality conditions.



\section{Log-Barrier Newton-CG Algorithm} \label{sec:lbncg}

\begin{algorithm}[ht!]
\caption{Log-Barrier Newton-Conjugate-Gradient}
\label{alg:lbncg}
\begin{algorithmic}
\STATE \emph{Inputs:} Tolerance  $\epsg \in (0, 1)$; backtracking 
parameter $\theta \in (0,1)$; starting point $x^0 > 0$;
accuracy parameters $\zeta_r \in (0,1)$ and $\bar{\zeta} \in (0,1)$;
maximum step \swmodify{scaling} $\beta \in [\epsg^{1/2}, 1)$;
step acceptance parameter $\eta \in (0,1)$; \\
\emph{Optional input:} Scalar  $\hat{M} > 0$ such that $\| \nabla^2 f(x) \| \le \hat{M}$ for all $x$ (set $\hat{M}=0$ if not provided);
\STATE Set $\epsH = \epsg^{1/2}$, $\mu = \epsg/4$, $c_\mu = \bar{\zeta} \mu$,
$M_\mu = \hat{M} + \mu$;
\FOR{$k=0,1,2,\dotsc$}
\IF{$[\nabla f(x^k)]_i \leq -\epsg$ for some coordinate $i$ or $\|\bX_k\nabla f(x^k)\|_\infty > \epsg$}
\STATE Call Algorithm~\ref{alg:ccg} with $H = \bX_k\nabla^2 \phi_\mu (x^k)\bX_k$,
$\epsilon=\epsH$, $g=\bX_k \nabla \phi_\mu(x^k)$, accuracy parameters $\zeta_r$
and $c_\mu$, and  bound $M=M_\mu$,  to obtain outputs $\hat{d}^k$, d\_type;

\IF{\{d\_type=NC\}}
\STATE $d^k \leftarrow -\mathrm{sgn}(g^\top \hat{d}^k) 
\min\left\{\frac{|(\hat{d}^k)^\top \bX_k \nabla^2 \phi_\mu (x^k) \bX_k \hat{d}^k|}
{\|\hat{d}^k\|^3}, \frac{\beta}{\|X_k^{-1} \bX_k \hat{d}^k\|_\infty}\right\} \hat{d}^k$;
\ELSE[d\_type=SOL]
\STATE $d^k \leftarrow \min\left\{1, \frac{\beta}{\|X^{-1}_k \bX_k \hat{d}^k\|_\infty} \right\}
\hat{d}^k$;
\ENDIF
\STATE Go to \textbf{Line Search};
\ELSE
\STATE Call Procedure~\ref{alg:meo} with $H = \bX_k \nabla^2 f (x^k) \bX_k$,
$\epsilon=\epsH$, and $M=\hat{M}$ (if provided); 
\IF{Procedure~\ref{alg:meo} certifies that $\lambdamin(\bX_k \nabla^2 f(x^k) \bX_k)
\ge -\epsH$}
\STATE
Terminate; 
\ELSE[direction of sufficient negative curvature $v$ returned by Procedure~\ref{alg:meo}]
\STATE Set 
$d^k \leftarrow -\mathrm{sgn}(v^\top \bX_k \nabla \phi_\mu(x^k))
\min\left\{|v^\top \bX_k \nabla^2 \phi_\mu (x^k) \bX_k v|,
\frac{\beta}{\|X_k^{-1} \bX_k v\|_\infty}\right\} v$;
\STATE Go to \textbf{Line Search};
\ENDIF
\ENDIF
\STATE \textbf{Line Search}: Compute a step length $\alpha_k=\theta^{j_k}$, 
where $j_k$ is the smallest nonnegative integer such that
\begin{equation} \label{eq:lsdecreasedamped}
	\phi_\mu(x^k + \alpha_k \bX_k d^k) < \phi_\mu(x^k)
	- \frac{\eta}{6}\alpha_k^3 \|d^k\|^{3};
\end{equation}
\STATE $x^{k+1} \leftarrow x^k+\alpha_k \bX_k d^k$;
\ENDFOR
\end{algorithmic}
\end{algorithm}

We now give an overview of our Log-Barrier Newton-CG (LBNCG)
algorithm, defined in Algorithm~\ref{alg:lbncg}, along with its
component parts.

The main branch in each iteration is conditional on the approximate
first-order optimality conditions, (\ref{eq:epsgradpos}) and
(\ref{eq:epscompliment}). When one or both of these conditions are not
satisfied, the Capped CG method (Algorithm \ref{alg:ccg}) is applied
to the damped, preconditioned Newton system
\begin{equation} \label{eq:lbn}
\left(\bX_k \nabla^2
\phi_\mu (x^k) \bX_k + 2\epsH I\right)d = \bX_k \nabla \phi_\mu(x^k),
\end{equation}
where according to the definition \eqref{eq:logbarrierdef} of the
barrier function $\phi_\mu$, we have
\[
\nabla \phi_\mu(x) = \nabla f(x) - \mu X^{-1} e \quad \mbox{ and } \quad
\nabla^2 \phi_\mu(x) = \nabla^2 f(x) + \mu X^{-2}.
\]
Algorithm~\ref{alg:ccg}, which is described further in Section
\ref{subsec:ccg} and in the earlier paper
\cite{CWRoyer_MONeill_SJWright_2019}, returns either an approximate
solution to the linear system \eqref{eq:lbn}, or else a direction of
sufficient negative curvature for $\bX_k \nabla^2 \phi_\mu (x^k)
\bX_k$.

Alternatively, when (\ref{eq:epsgradpos}) and (\ref{eq:epscompliment})
are satisfied, a ``Minimum Eigenvalue Oracle''
(Procedure~\ref{alg:meo}) is invoked to certify either that the
second-order optimality condition (\ref{eq:epspsd}) holds at the
current iterate or, if not, to return a direction $v$ of sufficient
negative curvature for $\bX_k \nabla f(x^k)
\bX_k$. Procedure~\ref{alg:meo} may be implemented by a randomized
procedure, with some probability of failure $\delta$, in which it
incorrectly certifies that (\ref{eq:epspsd}) is satisfied. Further
discussion of this procedure appears in Section~\ref{subsec:meo}.

However the search direction is chosen, it is scaled to obtain a step
$d^k$ that satisfies $\|X_k^{-1} \bX_k d^k\|_\infty \leq \beta < 1$.
This condition guarantees that for $x^k > 0$, we have
\[
x^{k+1} = x^k + \bX_k d^k = X_k \left(e + X_k^{-1} \bX_k d^k\right)
\geq x^k (1 - \beta) > 0,
\]
so that all iterates lie strictly inside the positive orthant. A
backtracking linesearch is performed along the direction $\bX_k d^k$
to ensure sufficient decrease in $\phi_\mu$. We note
that a value of $\beta$ close to its upper bound of $1$ results in
aggressive steps that may approach the zero bounds closely. Steps of
this kind are favored in practical interior-point methods. We will
see in later sections that a factor $(1-\beta)$ emerges in the
complexity results, leading to weaker bounds if $\beta$ is {\em too}
close to $1$. Though we are mindful of this effect, our focus is on
the dependence on the tolerance $\epsg$. The choice of $\beta$ is
independent of $\epsg$; we would not \refer{expect $\beta$ to be updated}
in response to a change in the tolerance $\epsg$.

We set a number of parameters at the beginning of the algorithm,
including the particular choice $\epsH = \epsg^{1/2}$. This choice is
commonly made in the unconstrained optimization literature too, for
purposes of aligning two different complexity expressions. In our
current context, this choice is embedded more deeply into the
analysis, but we keep the distinction between $\epsH$ and $\epsg$ to
maintain the generality of individual results. The particular choice
$\mu = \epsg/4$ of the barrier parameter is key to the complexity
result. Finally, we note that when $\hat{M}$ is an upper bound on $\|
\nabla^2 f(x) \|$ for all $x$ of interest, we have
\begin{equation} \label{eq:hx9}
\|\bX \nabla^2 \phi_\mu(x) \bX\|  \le
\|\bX \nabla^2  f(x) \bX\|  + \mu \| \bX X^{-2} \bX \| \le
\| \nabla^2  f(x) \| + \mu \le \hat{M}+\mu,
\end{equation}
so that $\| H \| \le M_\mu$ for $H$ defined as the input of Algorithm~\ref{alg:ccg} in
Algorithm~\ref{alg:lbncg}.

\subsection{Capped Conjugate Gradient} \label{subsec:ccg}

\begin{algorithm}[ht!]
\caption{Capped Conjugate Gradient}
\label{alg:ccg}
\begin{algorithmic}
\STATE \emph{Inputs:} Symmetric matrix $H \in \R^{n \times n}$; 
vector $g \ne 0$; damping parameter $\epsilon \in (0,1)$; desired relative 
accuracy parameter $\zeta_r \in (0,1)$; desired accuracy $c_\mu \in (0,1)$; 
\STATE \emph{Optional input:} scalar $M \geq 0$ such that
$\| H \| \le M$ (set to $0$ if not provided);
\STATE \emph{Outputs:} d\_type, $d$;
\STATE \emph{Secondary outputs:} final values of $M$, $\kappa$, 
$\hat{\zeta_r}$, $\tau$, and $T$;
\STATE Set
\[
\bH:=H+2\eps I, \quad \kappa := \frac{M+2\eps}{\eps}, \quad \hat{\zeta}_r := \frac{\zeta_r}{3 \kappa},
\quad
\tau := \frac{\sqrt{\kappa}}{\sqrt{\kappa}+1}, \quad
T:=\frac{4\kappa^4}{(1-\sqrt{\tau})^2};
\]
\STATE $y^0 \leftarrow 0$, $r^0 \leftarrow g$, $p^0 \leftarrow -g$, $j \leftarrow 0$; 
\IF{$(p^0)^\top \bH p^0 < \eps \|p^0\|^2$}
\STATE Set $d=p^0$ and terminate with d\_type=NC;
\ELSIF{$\|H p^0\| > M \|p^0\|$}
\STATE Set $M \leftarrow {\|H p^0\|}/{\|p^0\|}$ 
and update $\kappa,\hat{\zeta}_r,\tau,T$ accordingly;
\ENDIF
\WHILE{TRUE}
\STATE $\alpha_j \leftarrow {(r^j)^\top r^j}/{(p^j)^\top \bH p^j}$;
\COMMENT{Begin Standard CG Operations}
\STATE $y^{j+1} \leftarrow y^j+\alpha_j p^j$;
\STATE $r^{j+1} \leftarrow r^j + \alpha_j \bH p^j$;
\STATE $\beta_{j+1} \leftarrow \|r^{j+1}\|^2/\|r^j\|^2$;
\STATE $p^{j+1} \leftarrow  -r^{j+1} + \beta_{j+1}p^j$;
\COMMENT{End Standard CG Operations}
\STATE $j \leftarrow  j+1$;
\IF{$\|H p^j\| > M \|p^j\|$}
\STATE Set $M \leftarrow {\|H p^j\|}/{\|p^j\|}$ 
and update $\kappa,\hat{\zeta}_r,\tau,T$ accordingly;
\ELSIF{$\|H y^j\| > M \|y^j\|$}
\STATE Set $M \leftarrow {\|H y^j\|}/{\|y^j\|}$ 
and update $\kappa,\hat{\zeta}_r,\tau,T$ accordingly;
\ELSIF{$\| H r^j\| > M \|r^j\|$}
\STATE Set $M  \leftarrow {\| H r^j\|}/{\|r^j\|}$ 
and update $\kappa,\hat{\zeta}_r,\tau,T$ accordingly;
\ENDIF
\IF{$(y^j)^\top \bH y^j < \epsilon \|y^j\|^2$}
\STATE Set $d \leftarrow y^j$ and terminate with d\_type=NC;
\ELSIF{$\| r^j \| \le \hat{\zeta}_r \|r^0\|$ and $\|r^j\|_\infty \le c_\mu$}
\STATE Set $d \leftarrow y^j$ and terminate with d\_type=SOL;
\ELSIF{$(p^j)^\top \bH p^j < \eps \|p^j\|^2$}
\STATE Set $d \leftarrow p^j$ and terminate with d\_type=NC;
\ELSIF{$\|r^j\| >  \sqrt{T}\tau^{j/2} \|r^0\|$
}
\STATE Compute $\alpha_j,y^{j+1}$ as in the main loop above;
\STATE Find $i \in \{0,\dotsc,j-1\}$ such that
\begin{equation} \label{eq:weakcurvdir}
	\frac{(y^{j+1}-y^i)^\top \bH (y^{j+1}-y^i)}{\|y^{j+1}-y^i\|^2} \; < \; \eps;
\end{equation}
\STATE Set $d \leftarrow y^{j+1}-y^i$ and terminate with d\_type=NC;
\ENDIF
\ENDWHILE
\end{algorithmic}
\end{algorithm}

Algorithm~\ref{alg:ccg} is a safeguarded version of the conjugate
gradient (CG) procedure for either solving the linear system $(H+2\eps
I) y = - g$, or else detecting a direction $d$ such that $d^\top Hd \le -
\eps \|d\|^2$.  This method, which was described in
\cite{CWRoyer_MONeill_SJWright_2019}, consists of classical CG
iterations plus various checks to determine whether (a) the upper
bound $M$ on $\|H\|$ is adequate, and (b) negative curvature in $H$
has been detected. One of the techniques for detecting negative
curvature is the too-slow-convergence criterion $\|r^j\| >
\sqrt{T}\tau^{j/2} \|r^0\|$ (where $T$ and $\tau$ both depend on the
bound $M$). By Theorem~\ref{thm:cvCGwhilestronglycvx}, this behavior
can occur only when there exists some $i \in \{0,\dotsc,j-1\}$ such
that $(y^{j+1}-y^i)^\top \bH (y^{j+1}-y^i) \; < \;
\eps\|y^{j+1}-y^i\|^2$ holds. In this situation,
Algorithm~\ref{alg:ccg} returns $d = y^{j+1} - y^i$ as a direction of
sufficient negative curvature.


Algorithm~\ref{alg:ccg} is called from Algorithm~\ref{alg:lbncg} with
$H = \bX_k \nabla^2 \phi_\mu (x^k) \bX_k$ which, as we note in
\eqref{eq:hx9}, has norm bounded by $M_\mu = \hat{M}+\mu$, where
$\hat{M}$ is the bound on $\| \nabla^2 f(x^k) \|$. Hence the value of
$M$ in Algorithm~\ref{alg:ccg} will never be larger than this value.

Altogether, the safeguards mentioned above and the diagonal
preconditioning strategy guarantee that Capped CG requires
$\min \{n,\tcO (\epsilon^{-1/2})\}$ iterations to
terminate.
A derivation of this bound is given in Section~\ref{subsec:wccccg}.

\subsection{Minimum Eigenvalue Oracle} \label{subsec:meo}

\floatname{algorithm}{Procedure}

\begin{algorithm}[ht!]
\caption{Minimum Eigenvalue Oracle}
\label{alg:meo}
\begin{algorithmic}
  \STATE \emph{Inputs:} Symmetric matrix $H \in \R^{n \times n}$,
  tolerance $\epsilon>0$;
  \STATE \emph{Optional input:} Scalar $M>0$ such that $\| H \| \le M$;
  \STATE \emph{Outputs:} An estimate $\lambda$ of $\lambdamin(H)$
  such that $\lambda \le - \epsilon/2$, and vector $v$
  with $\|v\|=1$ such that $v^\top H v =\lambda$ OR
  a certificate that $\lambdamin(H) \ge -\epsilon$. In the latter case, when
  the certificate is output, it is false with probability at most $\delta$,
  for some $\delta \in [0,1)$.
\end{algorithmic}
\end{algorithm}

The Minimum Eigenvalue Oracle (Procedure~\ref{alg:meo}) is called when
the approximate first-order conditions \eqref{eq:epsgradpos},
\eqref{eq:epscompliment} are satisfied. This procedure either verifies
that the approximate second-order condition \eqref{eq:epspsd} is
satisfied as well (in which case the algorithm terminates), or else
returns a direction of sufficient negative curvature for the scaled
Hessian $\bX_k \nabla^2 f(x^k) \bX_k$, along which further progress
can be made in reducing the barrier function $\phi_\mu$.


This procedure can be implemented via any method that finds the
smallest eigenvalue of $H$ to an absolute precision of $\epsilon/2$
with probability at least $1-\delta$. (A deterministic implementation
based on a full eigenvalue decomposition would have $\delta=0$.)  In
Section~\ref{subsec:wcc2}, we will establish complexity results under
this general setting, and analyze the impact of the threshold
$\delta$.

Several possibilities for implementing Procedure~\ref{alg:meo} have
been proposed in the literature, with various guarantees.
In our setting, in which Hessian-vector products and vector operations
are the fundamental operations, Procedure~\ref{alg:meo} can be
implemented using the Lanczos method with a random starting vector
(see \cite{YCarmon_JCDuchi_OHinder_ASidford_2018}). The following
result from \cite[Lemma~2]{CWRoyer_MONeill_SJWright_2019} verifies its
effectiveness.



\begin{lemma} \label{lem:randlanczos}
Suppose that the Lanczos method is used to estimate the smallest
eigenvalue of $H$ starting with a random vector uniformly generated on
the unit sphere, where $\|H\| \le M$. For any $\delta \in [0,1)$, this
  approach finds the smallest eigenvalue of $H$ to an absolute
  precision of $\eps/2$, together with a corresponding direction $v$,
  in at most
\begin{equation} \label{eq:rlanc}
	\min \left\{n, 1+\ceil*{\frac12 \ln (2.75 n/\delta^2)
	\sqrt{\frac{M}{\epsilon}}} \right\} \quad \mbox{iterations},
\end{equation}	
with probability at least $1-\delta$.
\end{lemma}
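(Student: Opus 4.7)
The plan is to combine the exact-arithmetic finite termination of Lanczos with the classical randomized-start analysis of Kuczy\'nski--Wo\'zniakowski, and thereby recover the stated $\min$ in \eqref{eq:rlanc}.

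First I would justify the deterministic bound of $n$. The Lanczos process applied to a symmetric $H \in \R^{n\times n}$ produces, at iteration $j$, an orthonormal basis of the Krylov subspace $\mathcal{K}_j(H,v^0) = \lspan\{v^0, H v^0, \dotsc, H^{j-1} v^0\}$, and the tridiagonal matrix built from the three-term recurrence has as its eigenvalues the Ritz values of $H$ restricted to this Krylov subspace. After at most $n$ iterations the Krylov subspace equals $\R^n$ (or the process breaks down earlier, in which case an invariant subspace of $H$ has been found and the Ritz values are exact eigenvalues of $H$). In either case the smallest Ritz value equals $\lambdamin(H)$ exactly, trivially within the precision $\eps/2$, and a corresponding Ritz vector $v$ of unit norm is available.

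Next, to obtain the second quantity in the $\min$, I would reduce to estimating the largest eigenvalue of a positive semidefinite matrix. Define $A := M I - H$; since $\|H\| \le M$, $A \succeq 0$ and $\|A\| \le 2M$, and moreover $\lambda_{\max}(A) = M - \lambdamin(H)$. Moreover, the Krylov subspace $\mathcal{K}_j(A, v^0)$ coincides with $\mathcal{K}_j(H, v^0)$, and the largest Ritz value of $A$ on this subspace equals $M$ minus the smallest Ritz value of $H$, so an absolute-precision-$\eps/2$ estimate of $\lambda_{\max}(A)$ yields one for $\lambdamin(H)$, together with the maximizing Ritz vector $v$. Then I would invoke the Kuczy\'nski--Wo\'zniakowski bound on randomized Lanczos applied to the PSD matrix $A$: with a starting vector drawn uniformly from the unit sphere, after $k$ iterations the largest Ritz value approximates $\lambda_{\max}(A)$ to absolute accuracy $\eps/2$ with probability at least $1-\delta$ provided $k \ge 1 + \tfrac{1}{2}\ln(2.75 n/\delta^2)\sqrt{\|A\|/\eps}$, which, using $\|A\|\le 2M$ (absorbed by a constant into the $2.75$ factor already baked into the cited bound) gives exactly the expression in \eqref{eq:rlanc}.

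Since this lemma is quoted verbatim from \cite[Lemma~2]{CWRoyer_MONeill_SJWright_2019}, I would simply cite that result once the two observations above are in place: the deterministic $n$-step termination and the reduction to the PSD largest-eigenvalue problem. The main technical obstacle is already handled in the earlier paper, namely the Kuczy\'nski--Wo\'zniakowski probabilistic analysis that controls the constant $2.75$ and the quadratic dependence on $\ln(n/\delta^2)$; redoing that analysis here would be the hardest part if a self-contained proof were required, but in context a pointer to the cited lemma suffices.
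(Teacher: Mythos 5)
Your proposal is correct and matches the paper, which itself offers no proof of this lemma beyond the citation to \cite[Lemma~2]{CWRoyer_MONeill_SJWright_2019}; your sketch (deterministic $n$-step termination plus the Kuczy\'nski--Wo\'zniakowski analysis applied to the shifted PSD matrix $MI-H$) is exactly the argument underlying that cited result. One minor bookkeeping remark: the factor of $2$ from $\|MI-H\|\le 2M$ is not absorbed into the constant $2.75$ (which comes from $1.648^2\approx 2.716$) but rather cancels against the $2k-1$ in the exponent of the Kuczy\'nski--Wo\'zniakowski failure-probability bound.
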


Procedure~\ref{alg:meo} can be implemented by outputting the
approximate eigenvalue $\lambda$ for $H$, determined by the randomized
Lanczos process, along with the corresponding direction $v$, provided
that $\lambda \le -\eps/2$. When $\lambda>-\eps/2$,
Procedure~\ref{alg:meo} returns the certificate that $\lambdamin(H)
\ge -\eps$, \swmodify{a conclusion that is} correct with probability
at least $1-\delta$. Conjugate gradient with a random right-hand side
can be used as an alternative to randomized Lanczos, with essentially
the same properties; see \cite[Appendices~A and
  B]{CWRoyer_MONeill_SJWright_2019}.


\section{Complexity Analysis} \label{sec:wcc}

This section presents complexity results for
Algorithm~\ref{alg:lbncg}.  Section~\ref{subsec:wccccg} describes the
iteration complexity of Capped CG (Algorithm~\ref{alg:ccg}) and the
properties of its outputs. Section~\ref{subsec:wcc1} shows that
Algorithm~\ref{alg:lbncg} {\em deterministically} finds a point
satisfying the approximate first-order optimality conditions
\eqref{eq:epsgradpos}, \eqref{eq:epscompliment} in at most
\refer{$\tcO(n\epsg^{-1/2} + \epsg^{-3/2})$ iterations.
We also show that these conditions are satisfied in
at most $\tcO(n\epsg^{-3/4}+ \epsg^{-7/4})$ gradient evaluations and/or
Hessian-vector products when $n$ is large and
$\tcO(n\epsg^{-3/2})$ operations when $n$ is small.}
Finally, Section~\ref{subsec:wcc2} shows that
the same \swmodify{type of complexity bound holds (differing in the
  constants)} for finding a point which satisfies all approximate
optimality conditions in \eqref{eq:epsKKT} with high probability
(rather than deterministically).

\subsection{Properties of Capped CG} \label{subsec:wccccg}

We begin this subsection by finding a lower bound on the norm of the
right-hand side in the Newton system of Algorithm~\ref{alg:lbncg}
(Lemma~\ref{lem:loggradnorm}). We then derive a bound on the maximum
number of iterations of the Capped CG method that can occur before
returning a direction $\hat{d}^k$, which is either an approximate
solution of \eqref{eq:lbn} or a negative curvature direction for the
diagonally scaled Hessian of the log-barrier function
(Lemma~\ref{lem:ccgits}). Theorem~\ref{thm:cvCGwhilestronglycvx}
verifies that the direction returned in the case of too-slow-decrease
is in fact a vector with the required negative curvature properties.
Finally, we present a number of properties
of the search direction $d^k$ computed from the vector returned by
Algorithm~\ref{alg:ccg}, which will be instrumental in the complexity
analysis of the following sections (Lemma~\ref{lem:ccgsteps}).

\begin{lemma} \label{lem:loggradnorm}
Let $\mu = \epsg/4$ and suppose that either (\ref{eq:epsgradpos})
or (\ref{eq:epscompliment}) is violated at $x^k$. Then,
\begin{equation} \label{eq:loggradnorm}
\|\bX_k \nabla \phi_\mu (x^k)\| \geq \mu.
\end{equation}
\end{lemma}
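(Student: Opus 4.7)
The plan is to argue coordinatewise, exploiting the explicit form
\[
[\bX_k \nabla \phi_\mu(x^k)]_i \;=\; \bar{x}_i^k\,[\nabla f(x^k)]_i \;-\; \mu\,\frac{\bar{x}_i^k}{x_i^k},
\]
together with the elementary bound $\bar{x}_i^k/x_i^k = \min(1,1/x_i^k) \in (0,1]$, so that the ``barrier'' term $\mu\bar{x}_i^k/x_i^k$ contributes at most $\mu$ in magnitude. Since the Euclidean norm dominates the infinity norm, it suffices to exhibit a single coordinate $i$ for which $|[\bX_k \nabla \phi_\mu(x^k)]_i| \ge \mu$.

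I would then split into the two cases corresponding to which of \eqref{eq:epsgradpos} and \eqref{eq:epscompliment} is violated. In the first case, fix a coordinate $i$ with $[\nabla f(x^k)]_i < -\epsg = -4\mu$. If $x_i^k \le 1$, then $\bar{x}_i^k = x_i^k$ and $\bar{x}_i^k/x_i^k = 1$, so $[\bX_k \nabla \phi_\mu(x^k)]_i = x_i^k[\nabla f(x^k)]_i - \mu$; both terms are non-positive (the first strictly so), hence this quantity is strictly less than $-\mu$. If instead $x_i^k > 1$, then $\bar{x}_i^k = 1$ and $\bar{x}_i^k/x_i^k = 1/x_i^k \in (0,1)$, so the coordinate equals $[\nabla f(x^k)]_i - \mu/x_i^k < -4\mu$, which again has absolute value exceeding $\mu$.

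In the second case, pick $i$ with $|\bar{x}_i^k[\nabla f(x^k)]_i| > \epsg = 4\mu$. Using the reverse triangle inequality together with $\mu\bar{x}_i^k/x_i^k \le \mu$, I obtain
\[
|[\bX_k \nabla \phi_\mu(x^k)]_i| \;\ge\; |\bar{x}_i^k[\nabla f(x^k)]_i| - \mu\,\frac{\bar{x}_i^k}{x_i^k} \;>\; 4\mu - \mu \;=\; 3\mu.
\]
Combining the two cases yields $\|\bX_k \nabla \phi_\mu(x^k)\|_\infty > \mu$, hence $\|\bX_k \nabla \phi_\mu(x^k)\| \ge \mu$ as claimed.

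There is no real obstacle here; the only thing to be careful about is the piecewise definition of $\bar{x}_i^k$, which is why I would separate the first case into $x_i^k \le 1$ and $x_i^k > 1$. The choice $\mu = \epsg/4$ is precisely what makes the triangle-inequality estimate in the second case comfortably exceed $\mu$, and is the only place the specific constant is used.
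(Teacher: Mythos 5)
Your proposal is correct and follows essentially the same route as the paper's proof: the same coordinatewise decomposition $\bar{x}_i^k[\nabla f(x^k)]_i - \mu\bar{x}_i^k/x_i^k$, the same case split on which condition fails (with the sub-split on $x_i^k \le 1$ versus $x_i^k > 1$), and the same triangle-inequality estimate yielding $3\mu$ in the second case. The only cosmetic difference is that you route the conclusion explicitly through the $\ell_\infty$ norm, which the paper does implicitly.
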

\begin{proof}
By definition of $\nabla \phi_\mu(x^k)$, we have
\begin{equation} \label{eq:loggradlem1}
\|\bX_k \nabla \phi_\mu (x^k)\| = \|\bX_k \nabla f(x^k) - \mu \bX_k X_k^{-1} e\|.
\end{equation}

Suppose first that (\ref{eq:epsgradpos}) is not satisfied
at $x^k$. Thus, there exists at least one coordinate $i$ such
that $[\nabla f(x^k)]_i < -\epsg < 0$.
If $x_i^k \leq 1$, it follows that
\[
\bar{x}^k_i [\nabla f(x^k)]_i - \frac{\bar{x}^k_i}{x^k_i} \mu
= \bar{x}^k_i [\nabla f(x^k)]_i - \mu < -\mu.
\]
If $x_i^k > 1$, we have $\bar{x}_i^k = 1$ so that
\[
\bar{x}^k_i [\nabla f(x^k)]_i - \frac{\bar{x}^k_i}{x^k_i} \mu
< [\nabla f(x^k)]_i < -\epsg = - 4\mu.
\]
In either case, we have from (\ref{eq:loggradlem1}) that
\[
\|\bX_k \nabla \phi_\mu (x^k)\| \geq \mu.
\]

Now, suppose that (\ref{eq:epscompliment}) does not hold, so that
$|\bar{x}^k_i [\nabla f(x^k)]_i| > \epsg$ for some $i$. Thus, we have
\[
\|\bX_k \nabla \phi_\mu (x^k)\| 
\geq \left|\bar{x}^k_i [\nabla f(x^k)]_i - \mu \frac{\bar{x}^k_i}{x^k_i}\right| 
\geq |\bar{x}^k_i [\nabla f(x^k)]_i| - \mu \frac{\bar{x}^k_i}{x^k_i} 
\geq \epsg - \mu \geq 3\mu,
\]
proving the result.
\end{proof}

We now find the iteration bound on Algorithm~\ref{alg:ccg} that was
foreshadowed in Section~\ref{subsec:ccg}. The precise bound in the
following lemma is based on a quantity $J(M,\eps,\zeta_r,c_\mu)$, for
which the estimate in terms of the accuracy parameter is given following
the lemma.
%
%
\begin{lemma} \label{lem:ccgits}
The number of iterations of Algorithm~\ref{alg:ccg} is bounded by
\[
\min\{n,J(M,\eps,\zeta_r,c_\mu)\},
\]
where $J=J(M,\eps,\zeta_r,c_\mu)$ is the smallest integer such that
\begin{equation} \label{eq:pv8}
\sqrt{T}\tau^{J/2} \| r^0 \| \le \min\left\{\hat{\zeta}_r \|r^0\|,  c_\mu \right\},
\end{equation}
where $M$, $\hat{\zeta}_r$, $T$, and $\tau$ are the values returned by
the algorithm. If all iterates $y_i$ generated by
Algorithm~\ref{alg:ccg} are stored, the number of matrix-vector
multiplications required is bounded by
$\min\{n,J(M,\eps,\zeta_r,c_\mu)\}+1$.
If the iterates $y_i$ must be regenerated in order to define the
direction $d$ returned after \eqref{eq:weakcurvdir}, this bound
becomes $2\min\{n,J(M,\eps,\zeta_r,c_\mu)\}+1$.
\end{lemma}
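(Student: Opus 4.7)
\proof
The plan is to verify the two termination mechanisms --- the classical CG bound of $n$ and the geometric decrease dictated by $J$ --- and then translate iteration counts into matrix-vector product counts.

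First I would handle the $n$ bound. In exact arithmetic, the residuals $r^0, r^1, \ldots$ generated by the standard CG updates in Algorithm~\ref{alg:ccg} are mutually orthogonal in $\R^n$, so no more than $n$ of them can be nonzero. After at most $n$ iterations either $r^j=0$ (so the tolerance tests trigger SOL) or one of the curvature/slow-convergence tests has already fired. This yields the $n$ factor in the bound, and is independent of $J$.

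Next I would establish the $J$ bound by contrapositive. Suppose Algorithm~\ref{alg:ccg} has not yet terminated at iteration $J$. Then none of the NC tests on $(p^j)^\top \bar H p^j$, $(y^j)^\top \bar H y^j$, or the slow-convergence branch \eqref{eq:weakcurvdir} have fired through iteration $J$, so in particular the check $\|r^j\| > \sqrt{T}\tau^{j/2}\|r^0\|$ has failed at every $j \le J$. Thus $\|r^J\|\le \sqrt{T}\tau^{J/2}\|r^0\|$, and by the defining property \eqref{eq:pv8} of $J$ we obtain $\|r^J\| \le \hat{\zeta}_r\|r^0\|$ and $\|r^J\|_\infty \le \|r^J\| \le c_\mu$. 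These are exactly the conditions triggering the SOL exit, so termination must occur no later than iteration $J$. Combining with the previous paragraph gives the $\min\{n, J\}$ iteration bound. The key point that makes this work is Theorem~\ref{thm:cvCGwhilestronglycvx}, which guarantees that the slow-convergence branch always yields a valid negative-curvature direction satisfying \eqref{eq:weakcurvdir}; without it the $J$ bound would be meaningless since the algorithm might terminate with no useful output.

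Finally I would count Hessian-vector products. The main CG operations need $\bar H p^j$ at each iteration, which is one product with $H$ (since $\bar H p^j = Hp^j + 2\epsilon p^j$); the same product also supplies $\|Hp^j\|$ for the safeguard check. The quantities $Hy^j$ and $Hr^j$ needed by the remaining safeguards can be updated by linear recurrences using $Hp^j$ and the already-available $\bar H p^j$ (via the updates $y^{j+1}=y^j+\alpha_j p^j$ and the CG recurrences for $r^{j+1}$ and $p^{j+1}$), so no additional products are incurred per iteration. The single extra product accounts for the initial check $\|Hp^0\|$ before entering the loop, giving at most $\min\{n,J\}+1$ products when all iterates $y^i$ are stored. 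If memory constraints prevent storing the $y^i$, then when the slow-convergence branch fires at iteration $j+1$ we must re-run the CG recurrence to recover $y^i$ and form $d=y^{j+1}-y^i$; this re-run costs at most another $\min\{n,J\}$ products, yielding the $2\min\{n,J\}+1$ bound. The main obstacle is the bookkeeping in this last step --- verifying that all the safeguard norms can indeed be maintained without extra products --- but it is routine once the recurrences are written down explicitly.
\endproof
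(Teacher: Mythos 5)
Your argument is correct and follows essentially the same route as the paper's proof, which handles the case $J<n$ by exactly the dichotomy you use (if the slow-convergence test does not fire at iteration $J$, then $\|r^J\|\le\sqrt{T}\tau^{J/2}\|r^0\|\le\min\{\hat{\zeta}_r\|r^0\|,c_\mu\}$ forces the SOL exit, using $\|r^J\|_\infty\le\|r^J\|$) and defers the $n$ bound and the matrix-vector-product accounting to Lemma~1 of \cite{CWRoyer_MONeill_SJWright_2019}. You simply spell out those deferred details explicitly; no substantive difference.
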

\begin{proof}
We omit a detailed proof, as the result and proof are identical to
\cite[Lemma~1]{CWRoyer_MONeill_SJWright_2019} modulo a new definition
of $J$. We need only consider the case in which $J < n$, where $J$ is
the index defined in the lemma.  If $\|r^J\| > \sqrt{T} \tau^{J/2}
\|r^0\|$, the last termination test in Algorithm~\ref{alg:ccg} ensures
termination at iteration $J$. In the alternative case $\|r^J\| \leq
\sqrt{T} \tau^{J/2} \|r^0\|$, we have by definition of $J$ that 
\[
\|r^J \| \le \sqrt{T} \tau^{J/2} \|r^0\| \leq \min\left\{\hat{\zeta}_r
\|r^0\|, c_\mu\right\}.
\]
Therefore, $\|r^J\| \leq \hat{\zeta}_r \|r^0\|$ and $\|r^J\|_\infty
\leq \|r^J\| \leq c_\mu$ both hold. Thus, by the termination tests in
Algorithm~\ref{alg:ccg}, termination occurs in this case as well,
completing the proof.
\end{proof}

We can now estimate $J(M,\eps,\zeta_r,c_\mu)$ when
Algorithm~\ref{alg:ccg} is called by Algorithm~\ref{alg:lbncg}
\refer{and Assumption~\ref{assum:boundedgH} holds.}  Here,
we have $r^0 = \bX_k \nabla \phi_\mu(x^k)$ and $c_\mu = \bar{\zeta}
\mu$, so that the right-hand side of condition \eqref{eq:pv8} is
\begin{equation} \label{eq:pv9}
  \min\left\{\hat{\zeta}_r \|\bX_k \nabla \phi_\mu(x^k)\|, \bar{\zeta}
  \mu\right\}.
\end{equation}
Using the same argument as in~\cite{CWRoyer_MONeill_SJWright_2019},
when the minimum in \eqref{eq:pv9} is achieved by the first argument,
we have
\swmodify{
\begin{equation}
\label{eq:HvcountcappedCG1}
J(M,\eps,\zeta_r,c_\mu)  \le
\left\lceil \left(\sqrt{\kappa}+\frac{1}{2}\right)
\ln\left(\frac{144 \left(\sqrt{\kappa}+1\right)^2
  \kappa^6} {\zeta_r^2}\right) \right\rceil
  =  \tcO \left(\eps^{-1/2}\right).
\end{equation}
}
On the other hand, when the minimum in \eqref{eq:pv9} is achieved by
the second argument, an argument
of~\cite{CWRoyer_MONeill_SJWright_2019} along with the bound
\[
\|\bX_k
\nabla \phi_\mu(x^k)\| \leq \|\bX_k \nabla f(x^k)\| + \mu\|\bX_k
X_k^{-1} e\| \leq U_g + \mu \sqrt{n},
\]
shows that
\swmodify{
  \begin{align}
    \nonumber
  J(M,\eps,\zeta_r,c_\mu)  & \le
  \left\lceil \left(\sqrt{\kappa}+\frac{1}{2}\right)
\ln\left(\frac{16 \left(\sqrt{\kappa}+1\right)^2
  \kappa^4 (U_g + \mu \sqrt{n})^2} {\bar{\zeta}^2 \mu^2}\right) \right\rceil 
\\
\label{eq:HvcountcappedCG}
& =  \tcO(\eps^{-1/2}).
\end{align}
}
Therefore, in either case, we have that \swmodify{$J(M,\eps,\zeta_r,
 c_\mu) \leq \tcO(\eps^{-1/2})$}, as claimed in
Section~\ref{subsec:ccg}.

The following theorem shows that when Algorithm~\ref{alg:ccg} is
terminated because of the test $\|r^j\| > \sqrt{T}\tau^{j/2} \|r^0\|$,
then (\ref{eq:weakcurvdir}) will hold for some $i = 0,1, \dotsc, j$,
so that the outputs of Algorithm~\ref{alg:ccg} are well defined.
\begin{theorem} \label{thm:cvCGwhilestronglycvx}
Suppose that the main loop of Algorithm~\ref{alg:ccg} terminates with
$j=\hIcg$, where
\[
\hIcg \in \{1,\dotsc,\min\{n,J(M,\eps,\zeta_r,c_\mu)\}\},
\]
(where $J(M,\eps,\zeta_r,c_\mu)$ is defined in Lemma~\ref{lem:ccgits})
because the fourth termination test is satisfied and the three earlier
conditions do not hold, that is, $(y^{\hIcg})^\top \bH y^{\hIcg} \ge
\eps \|y^{\hIcg}\|^2$, $(p^{\hIcg})^\top \bH p^{\hIcg} \ge \eps
\|p^{\hIcg}\|^2$,
\[
\|r^{\hIcg}\| > \hat{\zeta}_r\|r^0\| \quad \mbox{and/or} \quad
\|r^{\hIcg}\|_\infty > c_\mu,
\]
and
\begin{equation} \label{eq:Kitsstronglycvx}
\|r^{\hIcg}\| > \sqrt{T}\tau^{\hIcg /2} \|r^0\|
\end{equation}
where $M$, $T$, and $\tau$ are the values returned by
Algorithm~\ref{alg:ccg}. Then $y^{\hIcg+1}$ is computed by
Algorithm~\ref{alg:ccg}, and we have
\begin{equation} \label{eq:yKp1negcurv}
\frac{(y^{\hIcg +1}-y^i)^\top \bH (y^{\hIcg+1}-y^i)}
     {\|y^{\hIcg+1}-y^i\|^2} < \eps, \quad \mbox{for some $i \in
       \{0,\dotsc,\hIcg-1\}$.}
\end{equation}
\end{theorem}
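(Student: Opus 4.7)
The plan is to proceed by contradiction. Assume that $y^{\hIcg+1}$ is computed (which follows from the algorithm's flow once the fourth test is triggered), and suppose that \eqref{eq:yKp1negcurv} fails, i.e.\ for every $i \in \{0, \ldots, \hIcg-1\}$ we have
\[
(y^{\hIcg+1} - y^i)^\top \bH (y^{\hIcg+1} - y^i) \; \ge \; \eps \|y^{\hIcg+1} - y^i\|^2.
\]
Combining this with the three conditions known to hold at iteration $\hIcg$ (namely $(y^{\hIcg})^\top \bH y^{\hIcg} \ge \eps \|y^{\hIcg}\|^2$, $(p^{\hIcg})^\top \bH p^{\hIcg} \ge \eps \|p^{\hIcg}\|^2$, together with $(p^0)^\top \bH p^0 \ge \eps \|p^0\|^2$ which must have held for the loop to even start), the idea is that $\bH$ behaves on the Krylov subspace $\mathcal{K}_{\hIcg+1}(\bH, r^0)$ as if it were $\eps$-positive-definite, so that the standard CG linear convergence rate applies and yields a residual bound that contradicts the too-slow-decrease condition \eqref{eq:Kitsstronglycvx}.

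The first technical step is to rewrite each difference $y^{\hIcg+1} - y^i$ in terms of the $\bH$-conjugate direction sum $\sum_{\ell=i}^{\hIcg} \alpha_\ell p^\ell$ and use $\bH$-conjugacy of the $p^\ell$ to convert the quadratic form inequalities into lower bounds of the form $\sum_{\ell=i}^{\hIcg} \alpha_\ell^2 (p^\ell)^\top \bH p^\ell \ge \eps \| \sum_{\ell=i}^{\hIcg} \alpha_\ell p^\ell \|^2$. Taking $i = \hIcg$ recovers the condition on $p^{\hIcg}$, but one also needs the intermediate indices: averaging or telescoping across $i = 0, \ldots, \hIcg-1$ should give a uniform lower bound showing that every element of the Krylov subspace $\mathcal{K}_{\hIcg+1}(\bH, r^0)$ has Rayleigh quotient with $\bH$ at least $\eps$. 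Once this is established, the standard CG Chebyshev polynomial analysis on the effective condition number $\kappa = (M + 2\eps)/\eps$ gives $\|r^{\hIcg+1}\| \le 2 \sqrt{\kappa} \,\tau^{(\hIcg+1)/2} \|r^0\|$, and combined with $\tau^{-1/2} = 1 + 1/\sqrt{\kappa}$ and the choice $T = 4\kappa^4/(1-\sqrt{\tau})^2$, one obtains $\|r^{\hIcg}\| \le \sqrt{T} \tau^{\hIcg/2} \|r^0\|$, contradicting \eqref{eq:Kitsstronglycvx}.

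The main obstacle will be extracting effective positive definiteness of $\bH$ on the entire Krylov subspace from the pointwise curvature inequalities on the specific vectors $\{y^{\hIcg+1} - y^i\}$ and $p^{\hIcg}$. The key identity is that any vector in $\mathcal{K}_{\hIcg+1}(\bH, r^0) = \mathrm{span}\{p^0, \ldots, p^{\hIcg}\}$ can be written as $\sum_{\ell=0}^{\hIcg} c_\ell p^\ell$, and by $\bH$-conjugacy the associated quadratic form decouples, so it suffices to control $(p^\ell)^\top \bH p^\ell / \|p^\ell\|^2$ for each $\ell$. The remaining $p^\ell$ with $\ell < \hIcg$ can be accessed via the increment formulas $y^{\ell+1} - y^\ell = \alpha_\ell p^\ell$ together with the assumed curvature lower bounds on the $y^{\hIcg+1} - y^i$ differences. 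This is essentially the argument used in \cite[Lemma~3]{CWRoyer_MONeill_SJWright_2019}, where the values of $T$ and $\tau$ in Algorithm~\ref{alg:ccg} are precisely chosen to make the quantitative bookkeeping work out; since our $\bH$, $\kappa$, and constants have the same structure, the same calculation transfers verbatim, so the proof can largely be reduced to citing that prior result after noting that the changed values of $c_\mu$ and $M$ do not affect the argument.
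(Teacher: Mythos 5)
Your proposal is correct and takes essentially the same route as the paper: both ultimately reduce the statement to \cite[Theorem~2]{CWRoyer_MONeill_SJWright_2019} (note: Theorem~2, not Lemma~3) after observing that the modified termination test involving $c_\mu$ and the updated $M$ only change which indices $\hIcg$ are admissible, not the properties of $\hIcg$ that the cited proof actually uses. One small caution about your sketch of the underlying mechanism: the finitely many Rayleigh-quotient conditions on $p^{\hIcg}$, $y^{\hIcg}$, and the differences $y^{\hIcg+1}-y^i$ do \emph{not} imply that every vector of the Krylov subspace has Rayleigh quotient at least $\eps$ (that claim is too strong); the cited argument instead works directly with the decrease of the quadratic model along the CG trajectory, using exactly those vectors. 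Since you defer the quantitative bookkeeping to the prior result anyway, this does not affect the validity of your proof.
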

\begin{proof}
This result follows directly from
\cite[Theorem~2]{CWRoyer_MONeill_SJWright_2019} after noting that the
properties of $\hIcg$ used in the proof do not depend on the
definition of $J(M,\eps,\zeta_r,c_\mu)$. In particular, $\hIcg$ simply
needs to be an index such that (\ref{eq:Kitsstronglycvx}) holds and
the CG process has not stopped iterating before reaching
$\hIcg$. Thus, the result holds once we account for the additional
stopping criterion $\|r^{\hIcg}\|_\infty \leq c_\mu$ in the new
definition of $J(M,\eps,\zeta_r,c_\mu)$.
\end{proof}

We focus now on the main output of Algorithm~\ref{alg:ccg}, which is
denoted by $\hat{d}^k$ in Algorithm~\ref{alg:lbncg}.  The properties
of $d^k$, which is obtained by scaling $\hat{d}^k$, are essential to
the first- and second-order complexity analysis of later sections.
\begin{lemma} \label{lem:ccgsteps}
Let \refer{Assumption~\ref{assum:fC22}} hold
and suppose that Algorithm~\ref{alg:ccg} is invoked at iteration $k$
of Algorithm~\ref{alg:lbncg}. Let $d^k$ be the vector obtained in
Algorithm~\ref{alg:lbncg} from the output $\hat{d}^k$ of
Algorithm~\ref{alg:ccg}. For each of the two possible settings of
output flag $d\_type$, we have the following.
\begin{enumerate}
\item When {\em d\_type=SOL},  the direction $d^k$ satisfies 
\begin{subequations} \label{eq:ccgstepSOLconds}
\begin{align}
\label{eq:ccgstepSOLcurv}
\epsH \|d^k\|^2 &\leq
(d^k)^\top \left(\bX_k \nabla^2 \phi_\mu (x^k) \bX_k + 2\epsH I\right) d^k, \\
\label{eq:ccgstepSOLnorm}
\|d^k\| &\leq 1.1 \epsH^{-1} \|\bX_k \nabla \phi_\mu(x^k)\|, \\
\label{eq:ccgstepSOLgradcond}
(d^k)^\top \bX_k \nabla \phi_\mu(x^k) &=
-\gamma_k (d^k)^\top \left(\bX_k \nabla^2 \phi_\mu (x^k) \bX_k + 2\epsH I\right) d^k,
\end{align}
\end{subequations}
where $\gamma_k  = \max\left\{\frac{\|X_k^{-1} \bX_k \hat{d}^k\|_\infty}{\beta}, 1\right\}$.
If $\|X_k^{-1} \bX_k \hat{d}^k\|_\infty \leq \beta$ holds, then $d^k$ also satisfies
\begin{equation}
\label{eq:ccgstepSOLres}
\|\hat{r}^k\|  \leq \frac{1}{2}\epsH\zeta_r \| d^k \|,
\end{equation}
where $\hat{r}^k$ is the residual of the scaled Newton system, defined
by
\begin{equation} \label{eq:def.rhat}
\hat{r}^k := \left(\bX_k \nabla^2 \phi_\mu (x^k) \bX_k +2\epsH I\right) \hat{d}^k
+ \bX_k \nabla \phi_\mu(x^k).
\end{equation}
\item When {\em d\_type=NC}, the direction $d^k$ satisfies $(d^k)^\top
  \bX_k \nabla \phi_\mu (x^k) \le 0$ and
  \begin{equation} \label{eq:ccgstepNC}
  \frac{(d^k)^\top \bX_k \nabla^2 \phi_\mu(x^k) \bX_k d^k}{\|d^k\|^2} \leq -\|d^k\| \leq - \epsH.
  \end{equation}
\end{enumerate}
\end{lemma}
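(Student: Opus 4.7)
The plan is to split the proof along the output flag \texttt{d\_type} and exploit, in each case, the termination conditions that Algorithm~\ref{alg:ccg} must satisfy together with the explicit scaling performed in Algorithm~\ref{alg:lbncg}. Write $\bH_k := \bX_k \nabla^2 \phi_\mu(x^k) \bX_k + 2\epsH I$ and $g_k := \bX_k \nabla \phi_\mu(x^k)$; in both cases $d^k = \sigma_k \hat{d}^k$ for a positive scalar $\sigma_k$ (with $\gamma_k = 1/\sigma_k$ in the SOL branch).

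For \texttt{d\_type}=SOL, termination in the SOL branch requires that all earlier NC tests failed at the terminating iterate, so $\hat{d}^k$ satisfies $(\hat{d}^k)^\top \bH_k \hat{d}^k \ge \epsH \|\hat{d}^k\|^2$; squaring by $\sigma_k^2$ gives (\ref{eq:ccgstepSOLcurv}). For (\ref{eq:ccgstepSOLnorm}), I would combine this curvature bound with the residual equation $\bH_k \hat{d}^k = -g_k + \hat{r}^k$ and the CG termination bound $\|\hat{r}^k\| \le \hat{\zeta}_r \|g_k\|$ to get $\epsH \|\hat{d}^k\| \le (1+\hat{\zeta}_r)\|g_k\|$; the parameter definition $\hat{\zeta}_r = \zeta_r/(3\kappa)$ with $\zeta_r<1$, $\kappa \ge 1$ keeps $1+\hat{\zeta}_r \le 1.1$, and the bound passes to $d^k$ since $\sigma_k \le 1$. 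The equality (\ref{eq:ccgstepSOLgradcond}) follows from the standard CG identity $(\hat{d}^k)^\top \hat{r}^k = 0$: substituting $d^k = \sigma_k \hat{d}^k$ and $\bH_k \hat{d}^k + g_k = \hat{r}^k$ reduces both sides of the claimed identity to $-\sigma_k (\hat{d}^k)^\top \bH_k \hat{d}^k$.

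For (\ref{eq:ccgstepSOLres}), the hypothesis $\|X_k^{-1}\bX_k \hat{d}^k\|_\infty \le \beta$ forces $\gamma_k = 1$ and so $d^k = \hat{d}^k$. To convert the bound $\|\hat{r}^k\| \le \hat{\zeta}_r \|g_k\|$ into one in terms of $\|d^k\|$, I would rearrange $\bH_k \hat{d}^k = -g_k + \hat{r}^k$ to obtain $\|g_k\| \le (M_\mu + 2\epsH)\|\hat{d}^k\| + \hat{\zeta}_r \|g_k\|$, yielding $\|g_k\| \le \kappa \epsH (1-\hat{\zeta}_r)^{-1}\|\hat{d}^k\|$. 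Multiplying by $\hat{\zeta}_r$ and using $\hat{\zeta}_r \kappa = \zeta_r/3$ together with $\hat{\zeta}_r \le 1/3$ (so $1-\hat{\zeta}_r \ge 2/3$) collapses the constant to $\zeta_r/2$, producing the advertised bound.

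For \texttt{d\_type}=NC, termination (via any of the three NC branches, including the slow-residual branch that Theorem~\ref{thm:cvCGwhilestronglycvx} validates) guarantees $(\hat{d}^k)^\top \bH_k \hat{d}^k < \epsH \|\hat{d}^k\|^2$, i.e.\ the Rayleigh quotient of $\bX_k \nabla^2 \phi_\mu(x^k) \bX_k$ at $\hat{d}^k$ is strictly below $-\epsH$; Rayleigh quotients are scale-invariant, so the same holds along $d^k$. The explicit sign choice in the definition of $d^k$ delivers $(d^k)^\top g_k \le 0$ by construction. Setting $\lambda := (\hat{d}^k)^\top \bX_k \nabla^2 \phi_\mu(x^k) \bX_k \hat{d}^k / \|\hat{d}^k\|^2$, the scale factor in Algorithm~\ref{alg:lbncg} is bounded above by $|\lambda|/\|\hat{d}^k\|$, giving $\|d^k\| \le |\lambda|$ and hence the middle inequality of (\ref{eq:ccgstepNC}). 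Finally, $\|d^k\| \ge \epsH$ splits into two subcases: if $|\lambda|/\|\hat{d}^k\|$ attains the minimum then $\|d^k\| = |\lambda| > \epsH$; otherwise, using that each diagonal entry of $X_k^{-1}\bX_k$ is at most $1$ so $\|X_k^{-1}\bX_k \hat{d}^k\|_\infty \le \|\hat{d}^k\|_\infty \le \|\hat{d}^k\|$, we get $\|d^k\| \ge \beta \ge \epsg^{1/2} = \epsH$. The main bookkeeping obstacle is the SOL case (\ref{eq:ccgstepSOLres}): translating a residual bound stated in terms of $\|g_k\|$ into one in terms of $\|d^k\|$ requires carefully invoking both the damping $2\epsH I$ (to control $\|g_k\|/\|\hat{d}^k\|$ via $\kappa$) and the specific form of $\hat{\zeta}_r$, with all universal constants lining up exactly.
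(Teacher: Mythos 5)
Your proposal is correct and follows essentially the same route as the paper's proof (the paper outsources \eqref{eq:ccgstepSOLcurv}--\eqref{eq:ccgstepSOLnorm} to \cite[Lemma~3]{CWRoyer_MONeill_SJWright_2019}, but the underlying argument is the one you give; the treatment of \eqref{eq:ccgstepSOLgradcond}, \eqref{eq:ccgstepSOLres}, and the NC case matches the paper step for step, including the exact constant chain $\hat{\zeta}_r\kappa=\zeta_r/3$ for \eqref{eq:ccgstepSOLres}). One numerical nit in your derivation of \eqref{eq:ccgstepSOLnorm}: $1+\hat{\zeta}_r\le 1.1$ does not hold in general, since $\hat{\zeta}_r=\zeta_r/(3\kappa)$ with $\kappa\ge 2$ only gives $\hat{\zeta}_r<1/6$, so $1+\hat{\zeta}_r$ can reach about $1.167$. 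The fix is immediate: the orthogonality $(\hat{d}^k)^\top\hat{r}^k=0$ that you already invoke for \eqref{eq:ccgstepSOLgradcond} gives $\epsH\|\hat{d}^k\|^2\le(\hat{d}^k)^\top(H+2\epsH I)\hat{d}^k=-(\hat{d}^k)^\top g_k\le\|\hat{d}^k\|\,\|g_k\|$, so the bound holds with constant $1$, and a fortiori with $1.1$.
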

\begin{proof}
  For simplicity of notation, we use the following shorthand in the proof:
  \[
  H = \bX_k \nabla^2 \phi_\mu (x^k) \bX_k, \quad
  g = \bX_k \nabla \phi_\mu(x^k).
  \]
  Since Algorithm \ref{alg:lbncg} invoked Algorithm
  \ref{alg:ccg}, at least one of the conditions (\ref{eq:epsgradpos})
  or (\ref{eq:epscompliment}) must be violated at $x^k$. Thus, by
  Lemma~\ref{lem:loggradnorm}, we have $\| g \| \geq \mu > 0$, so the
  iterates of Algorithm~\ref{alg:ccg} are well defined.

  Consider first the case of d\_type=SOL. The bounds
  (\ref{eq:ccgstepSOLcurv}) and (\ref{eq:ccgstepSOLnorm}) follow by
  the same argument as in the first part of the proof of
  \cite[Lemma~3]{CWRoyer_MONeill_SJWright_2019}.
We now prove (\ref{eq:ccgstepSOLgradcond}).  The residual $\hat{r}^k$
at the final iteration of CG procedure is orthogonal to all previous
search directions, so that $(\hat{d}^k)^\top \hat{r}^k = 0$ (see
\cite[Appendix~A]{CWRoyer_MONeill_SJWright_2019}).  Since $\hat{d}^k$
and $d^k$ are collinear, we have $(d^k)^\top \hat{r}^k = 0$, so from
(\ref{eq:def.rhat}) it follows that
\begin{equation}
(d^k)^\top g = -(d^k)^\top (H +2\epsH I) \hat{d}^k.
\end{equation}
When $\|X_k^{-1} \bX_k \hat{d}^k\|_\infty \leq \beta$, we have $d^k =
\hat{d}^k$, so
\[
(d^k)^\top g = -(d^k)^\top (H +2\epsH I) \hat{d}^k = -(d^k)^\top (H
+2\epsH I) d^k,
\]
proving (\ref{eq:ccgstepSOLgradcond}) in this case. When $\|X_k^{-1}
\bX_k \hat{d}^k\|_\infty > \beta$, we have 
\[
d^k =
\frac{\beta}{\|X_k^{-1} \bX_k \hat{d}^k\|_\infty} \hat{d}^k
\]
and thus
\[
(d^k)^\top g = -(d^k)^\top (H +2\epsH I) \hat{d}^k = -
\frac{\|X_k^{-1} \bX_k \hat{d}^k\|_\infty}{\beta} (d^k)^\top (H
+2\epsH I) d^k,
\]
proving (\ref{eq:ccgstepSOLgradcond}) for this case as well.

Turning to (\ref{eq:ccgstepSOLres}), we note first that from
termination conditions of Algorithm~\ref{alg:ccg} that $\|\hat{r}^k\|
\leq \hat{\zeta}_r \|g\|$.  Thus, using (\ref{eq:def.rhat}), we have
that
\[
\|\hat{r}^k\| \leq \hat{\zeta}_r \|g\| \leq 
\hat{\zeta}_r \left(\|(H + 2\epsH I) \hat{d}^k\| + \|\hat{r}^k\|\right)
\leq \hat{\zeta}_r \left((M + 2\epsH) \|\hat{d}^k\| + \|\hat{r}^k\|\right),
\]
where $M$ is the value that is returned by Algorithm~\ref{alg:ccg}, so
that
\[
\|\hat{r}^k\| \leq \frac{\hat{\zeta}_r}{1 - \hat{\zeta}_r} (M + 2\epsH) \|\hat{d}^k\|.
\]
Using again that $\hat{\zeta}_r = \zeta_r/(3 \kappa) < 1/6$ and the
definition of $\hat{\zeta}_r$ in Algorithm~\ref{alg:ccg}, we have
\[
\frac{\hat{\zeta}_r}{1 - \hat{\zeta}_r} (M + 2\epsH)
\leq \frac65 \hat{\zeta}_r (M + 2\epsH)
= \frac65 \frac{\zeta_r \epsH}{3} < \frac12 \zeta_r \epsH,
\]
which yields (\ref{eq:ccgstepSOLres}) when we note that $d^k =
\hat{d}^k$ when $\|X_k^{-1} \bX_k \hat{d}^k\|_\infty \leq \beta$.

In the case of d\_type=NC, we recall that Algorithm~\ref{alg:lbncg}
defines
\begin{equation} \label{eq:sd8}
d^k = -\mathrm{sgn}(g^\top \hat{d}^k) 
\min\left\{\frac{|(\hat{d}^k)^\top H \hat{d}^k|}
{\|\hat{d}^k\|^3}, \frac{\beta}{\|X_k^{-1} \bX_k \hat{d}^k\|_\infty}\right\} \hat{d}^k.
\end{equation}
We have from positivity of the ratios in the $\min \{ \cdot, \cdot\}$
expression that
\[
\mathrm{sgn} (g^\top d_k) = - \mathrm{sgn}(g^\top \hat{d}^k)^2 = -1,
\]
so that $g^\top d_k \le 0$. Next, since $\hat{d}^k$ and $d^k$ are
collinear, we have
\[
\frac{(d^k)^\top (H + 2\epsH I) (d^k)}{\|d^k\|^2} = 
\frac{(\hat{d}^k)^\top (H + 2\epsH I) (\hat{d}^k)}{\|\hat{d}^k\|^2} \leq \epsH,
\]
so that
\begin{equation} \label{eq:ccgsteplemeq3}
\frac{(d^k)^\top H (d^k)}{\|d^k\|^2} \leq -\epsH.
\end{equation}
When the $\min$ in \eqref{eq:sd8} is achieved by the first term, we
have
\[
\|d^k\| = \frac{|(\hat{d}^k)^\top H \hat{d}^k|}{\|\hat{d}^k\|^2} \ge \epsH,
\]
proving (\ref{eq:ccgstepNC}) in this case.  Otherwise, when the $\min$
in \eqref{eq:sd8} is achieved by the second term, we have
\[
\beta =
\|X_k^{-1} \bX_k d^k\|_\infty \leq \|X_k^{-1} \bX_k d^k\| \leq
\|X_k^{-1} \bX_k\| \|d^k\| \leq \|d^k\|.
\]
Using this bound, along with
(\ref{eq:ccgsteplemeq3}) and the fact that $\beta \geq\epsH$ (by
definition), we have
\[
\|d^k\| \geq \min \left\{\frac{|(\hat{d}^k)^\top H \hat{d}^k|}{\|\hat{d}^k\|^2}, \beta \right\}
=\min \left\{\frac{|(d^k)^\top H (d^k)|}{\|d^k\|^2}, \beta \right\}
\geq \min\{\epsH, \beta\} = \epsH.
\]
In either case of the $\min$ in \eqref{eq:sd8}, we have $\|d^k\| \leq
-(d^k)^\top H d^k/\|d^k\|^2$, so that 
\[
\frac{(d^k)^\top H d^k}{\|d^k\|^2} \leq - \|d^k\| \leq -\epsH,
\]
proving (\ref{eq:ccgstepNC}).
\end{proof}

\subsection{First-Order Complexity Analysis} \label{subsec:wcc1}

We now derive a worst-case complexity result for the first-order
optimality condtions (\ref{eq:epsgradpos})
and (\ref{eq:epscompliment}). We show that when
Algorithm~\ref{alg:ccg} returns d\_type=SOL and a unit step is taken
by the line search procedure in Algorithm~\ref{alg:lbncg} (that is,
$\alpha_k = 1$), either the first-order optimality conditions hold at
$x^{k+1}$, or else $\|d^k\|$ is large enough to make significant
progress in reducing the function $\phi_\mu$.
Theorem~\ref{thm:wcc1} and Corollary~\ref{coro:wcc1Hv} state
first-order complexity results in terms of the number of iterations of
Algorithm~\ref{alg:lbncg} and the number of gradient evaluations
and/or Hessian vector products, respectively.

Our results depend on the following technical result concerning the
decrease of the log-barrier term in $\phi_\mu$. Its proof can be found
in Appendix~\ref{app:proof.logbound}.
\begin{lemma} \label{lem:logbound}
  Given $x > 0$, define $X$, $\bX$ as in \eqref{eq:Xdef}, and suppose
  that $d \in \R^n$ is such that $\|X^{-1} \bX d\|_\infty \leq \beta <
  1$. Then,
\begin{multline} \label{eq:logbound}
- \sum_{i=1}^n \log(x_i + \bar{x}_i d_i) + \sum_{i=1}^n \log(x_i) \\
\leq -e^\top X^{-1} \bX d + \frac{1}{2} d^\top \bX X^{-2} \bX d
+ \frac{2 - \beta}{6(1-\beta)^2}\|d\|^3 .
\end{multline}
\end{lemma}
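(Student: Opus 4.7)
The plan is to reduce the inequality to a coordinatewise Taylor estimate on the scalar function $t \mapsto -\log(1+t)$. Define $y_i = \bar{x}_i d_i / x_i$, so that $y = X^{-1}\bX d$ and $\|y\|_\infty \leq \beta < 1$. With this substitution, $x_i + \bar{x}_i d_i = x_i(1 + y_i)$, so the left-hand side becomes $-\sum_i \log(1+y_i)$. On the right-hand side, $e^\top X^{-1}\bX d = \sum_i y_i$ and $d^\top \bX X^{-2}\bX d = \sum_i y_i^2$. The inequality is therefore equivalent to
\[
\sum_{i=1}^n \left[-\log(1+y_i) + y_i - \tfrac{1}{2} y_i^2\right] \;\leq\; \frac{2-\beta}{6(1-\beta)^2}\,\|d\|^3.
\]

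Next, I would bound the summand coordinatewise with a case split on the sign of $y_i$. For $y_i \geq 0$, the Taylor series $\log(1+y_i) = y_i - y_i^2/2 + y_i^3/3 - y_i^4/4 + \cdots$ is alternating with decreasing magnitudes (since $|y_i| < 1$), which implies $-\log(1+y_i) + y_i - y_i^2/2 \leq 0$. For $y_i < 0$, writing $s_i = -y_i > 0$, the Taylor series gives
\[
-\log(1+y_i) + y_i - \tfrac{1}{2} y_i^2 \;=\; \sum_{k=3}^\infty \frac{s_i^k}{k} \;\leq\; \frac{s_i^3}{3}\sum_{k=0}^\infty s_i^k \;=\; \frac{|y_i|^3}{3(1-|y_i|)} \;\leq\; \frac{|y_i|^3}{3(1-\beta)}.
\]
Combining the two cases yields the uniform bound $-\log(1+y_i) + y_i - \tfrac{1}{2} y_i^2 \leq |y_i|^3/(3(1-\beta))$.

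Summing over $i$ and using $|y_i| = (\bar{x}_i/x_i)|d_i| \leq |d_i|$ (since $\bar{x}_i \leq x_i$), together with the elementary inequality $\sum_i |d_i|^3 \leq \|d\|_\infty \sum_i d_i^2 \leq \|d\|\cdot \|d\|^2 = \|d\|^3$, I obtain
\[
\sum_{i=1}^n \left[-\log(1+y_i) + y_i - \tfrac{1}{2} y_i^2\right] \;\leq\; \frac{\|d\|^3}{3(1-\beta)}.
\]
Finally I would verify that $\tfrac{1}{3(1-\beta)} \leq \tfrac{2-\beta}{6(1-\beta)^2}$; indeed the difference is $\beta/[6(1-\beta)^2] \geq 0$. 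This completes the proof.

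The only real obstacle is the case split on the sign of $y_i$ (since the Taylor remainder is favorable on one side and needs explicit bounding on the other), and keeping careful track of constants to land exactly on $(2-\beta)/[6(1-\beta)^2]$; the intermediate estimate $1/[3(1-\beta)]$ is tight enough to cover it.
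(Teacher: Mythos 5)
Your proof is correct. It follows the same overall structure as the paper's: reduce to a coordinatewise third-order Taylor-type bound on $-\log(1+y_i)$ with $y_i = \bar{x}_i d_i/x_i$, then use $|y_i|\le|d_i|$ and $\|d\|_3\le\|d\|_2$ to assemble the vector inequality. The one place you diverge is in bounding the cubic remainder: the paper uses the integral form of the Taylor remainder together with Chebyshev's integral inequality to land directly on the per-coordinate constant $\tfrac{2-\beta}{6(1-\beta)^2}$, whereas you use the power series with a sign split (the $y_i\ge 0$ case contributing nothing, the $y_i<0$ case bounded by a geometric series), arriving at the constant $\tfrac{1}{3(1-\beta)}$, which — as your final check $\tfrac{2-\beta}{6(1-\beta)^2}-\tfrac{1}{3(1-\beta)}=\tfrac{\beta}{6(1-\beta)^2}\ge 0$ confirms — is actually slightly sharper. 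Your route is the more elementary of the two and would serve equally well here; the paper's integral/Chebyshev argument buys nothing extra beyond producing the stated constant without a case split.
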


Our first result deals with the case in which a full step
($\alpha_k=1)$ is taken in Algorithm~\ref{alg:lbncg}.


\begin{lemma} \label{lem:shortdsmallKKT}
Let \refer{Assumption~\ref{assum:fC22}} hold
and suppose that Algorithm~\ref{alg:ccg} is invoked at an iterate
$x^k$ of Algorithm~\ref{alg:lbncg}, and returns d\_type = SOL.  Then,
when the unit step is taken (that is, $x^{k+1} = x^k + \bX_k d^k$), we
have either
\begin{equation} \label{eq:dnorm}
  \|d^k\| \geq c_d \epsH, \quad \mbox{where} \;\;
c_d = \min \left\{\frac{1-\bar{\zeta}}{9},
\left(\frac{3}{2 L_H}\right)^{1/2},
\frac{1}{2\left(L_H + 9/2 + \zeta_r \right)}\right\},
\end{equation}
or else 
\begin{equation} \label{eq:ak9}
\nabla f(x^{k+1}) \geq -\epsg e \quad \mbox{and} \quad
\|\bX_{k+1} \nabla f(x^{k+1})\|_\infty \leq \epsg.
\end{equation}
\end{lemma}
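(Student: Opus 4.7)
The plan is a direct computation under the contrapositive hypothesis $\|d^k\| < c_d \epsH$, showing that \eqref{eq:ak9} must hold at $x^{k+1}$. The first step is to check that in this regime the step-length scaling in the SOL branch of Algorithm~\ref{alg:lbncg} does not activate. Since $\|X_k^{-1}\bX_k d^k\|_\infty \leq \|d^k\| < c_d \epsH \leq \beta$ (using $c_d < 1$ together with $\beta \geq \epsg^{1/2} = \epsH$), we have $d^k = \hat{d}^k$. Consequently, the residual bound \eqref{eq:ccgstepSOLres}, giving $\|\hat{r}^k\| \leq \tfrac{1}{2}\zeta_r \epsH \|d^k\|$, is available, and the SOL termination test of Algorithm~\ref{alg:ccg} additionally gives $\|\hat{r}^k\|_\infty \leq c_\mu = \bar{\zeta}\mu$.

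The key identity is obtained by reading off the $i$-th component of the inexact Newton equation \eqref{eq:def.rhat} after expanding $\nabla \phi_\mu$ and $\nabla^2 \phi_\mu$ via \eqref{eq:logbarrierdef}:
\begin{equation*}
\bar{x}_i^k [\nabla f(x^k) + \nabla^2 f(x^k) \bX_k d^k]_i
= \mu \frac{\bar{x}_i^k}{x_i^k} - \left( \mu \frac{(\bar{x}_i^k)^2}{(x_i^k)^2} + 2\epsH \right) d_i^k + \hat{r}_i^k.
\end{equation*}
Combining this with the Hessian Lipschitz bound \eqref{eq:fliphessian} applied at $x^k$ and $x^{k+1} = x^k + \bX_k d^k$ (whose remainder is controlled by $\tfrac{L_H}{2}\|\bX_k d^k\|^2 \leq \tfrac{L_H}{2}\|d^k\|^2$) yields an explicit expression for $\bar{x}_i^k [\nabla f(x^{k+1})]_i$ that we bound coordinatewise.

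The argument then splits on whether $x_i^k \leq 1$ or $x_i^k > 1$. When $x_i^k \leq 1$, we have $\bar{x}_i^k = x_i^k$, both ratios $\bar{x}_i^k/x_i^k$ and $(\bar{x}_i^k/x_i^k)^2$ equal $1$, and dividing the identity by $x_i^k$ produces $\mu - (\mu + 2\epsH) d_i^k + \hat{r}_i^k$ divided by $x_i^k$. Using $\mu = \epsH^2/4$, $|d_i^k| \leq \|d^k\| < c_d \epsH$, $|\hat{r}_i^k| \leq \bar{\zeta}\mu$, and $\epsH \leq 1$, one obtains a lower bound of the form $\mu(1-\bar{\zeta}-9 c_d)/x_i^k$, which is nonnegative whenever $c_d \leq (1-\bar{\zeta})/9$; absorbing the Taylor error $\tfrac{L_H}{2} c_d^2 \epsH^2 \leq \epsg$ supplies the constraint $c_d \leq (3/(2L_H))^{1/2}$. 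When $x_i^k > 1$ we have $\bar{x}_i^k = 1$ and a direct triangle-inequality bound on the identity delivers the same conclusions.

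For the scaled condition $|\bar{x}_i^{k+1}[\nabla f(x^{k+1})]_i| \leq \epsg$, we use $\bar{x}_i^{k+1} \leq 1$ if $x_i^k > 1$, and otherwise use $\bar{x}_i^{k+1} = x_i^k(1+d_i^k)$ to pull the $x_i^k$ back inside, giving a bound of the form $(1+|d_i^k|)[\mu + (\mu+2\epsH)|d_i^k| + |\hat r^k_i| + x_i^k|R_i|]$. Feeding in $|\hat r^k_i| \le \tfrac12 \zeta_r \epsH \|d^k\|$ from the residual bound and requiring the right-hand side to stay below $\epsg = 4\mu$ yields, after collecting the linear-in-$c_d$ and $L_H c_d^2$ terms, the third (tightest) constraint $c_d \leq 1/(2(L_H + 9/2 + \zeta_r))$. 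Taking the minimum of these three constraints gives $c_d$ as in \eqref{eq:dnorm}. The main obstacle is the bookkeeping in this last step: one must simultaneously track the $(1+d_i^k)$ factor, the two different available bounds on $\hat r^k_i$ (in $\|\cdot\|$ and in $\|\cdot\|_\infty$), and the Taylor remainder, ensuring each candidate bound can be made smaller than $\epsg$ by a margin that survives multiplication by $(1+d_i^k) \leq 1 + c_d$.
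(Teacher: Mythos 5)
Your proposal is correct and follows essentially the same route as the paper's proof: rule out the $\beta$-scaling so that $d^k=\hat d^k$ and both residual bounds ($\|\hat r^k\|_\infty\le\bar\zeta\mu$ and $\|\hat r^k\|\le\tfrac12\zeta_r\epsH\|d^k\|$) apply, expand the inexact Newton equation \eqref{eq:def.rhat}, split on $x_i^k\le 1$ versus $x_i^k>1$, and absorb the Taylor remainder — yielding exactly the three constraints defining $c_d$. The only cosmetic differences are that you work componentwise throughout where the paper uses $\infty$-norm inequalities, and you carry a $(1+|d_i^k|)$ factor where the paper uses the cruder bound $\bar x_i^{k+1}\le 2\bar x_i^k$; both lead to the same constants.
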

\begin{proof}
We begin by noting that if the output $\hat{d}^k$ from
Algorithm~\ref{alg:ccg} satisfies $\|X_k^{-1} \bX_k \hat{d}^k\|_\infty
\geq \beta$ then
\[
\epsH \leq \beta = \|X_k^{-1} \bX_k d^k\|_\infty \leq \|X_k^{-1} \bX_k
d^k\| \leq \|X_k^{-1} \bX_k\| \|d^k\| \leq \|d^k\|,
\]
so the claim (\ref{eq:dnorm}) holds, since $c_d \leq 1$. Thus, we
assume for the remainder of the proof that $\|X_k^{-1} \bX_k
\hat{d}^k\|_\infty < \beta$ and $d^k = \hat{d}^k$, and that $\|d^k\| <
c_d \epsH$. We show that the conditions \eqref{eq:ak9} hold in this
case.

We start by establishing that $\nabla f(x^{k+1}) \geq -\epsg e$.
Since d\_type = SOL, we have that $\bar{\zeta} \mu \geq
\|\hat{r}^k\|_\infty$ where $\hat{r}^k$ is defined in
(\ref{eq:def.rhat}). Using $\|\bX_k X_k^{-2} \bX_k\| \leq 1$ and
$\epsH \|d^k\| < c_d \epsH^2 = c_d \epsg$, it follows that
\begin{align}
\bar{\zeta} \mu &\geq
\|\left(\bX_k \nabla^2 \phi_\mu(x^k)\bX_k + 2 \epsH I\right)d^k
+ \bX_k \nabla \phi_\mu(x^k)\|_\infty \nonumber \\
&= \|\bX_k \left(\nabla^2 f(x^k) \bX_k d^k + \nabla \phi_\mu(x^k) \right)
+ \mu \bX_k X_k^{-2} \bX_k d^k + 2 \epsH d^k\|_\infty \nonumber \\
&\geq \|\bX_k \left(\nabla^2 f(x^k) \bX_k d^k + \nabla \phi_\mu(x^k) \right)\|_\infty
- \mu \|\bX_k X_k^{-2} \bX_k d^k\|_\infty - 2 \epsH \|d^k\|_\infty \nonumber \\
&\geq \|\bX_k \left(\nabla^2 f(x^k) \bX_k d^k + \nabla \phi_\mu(x^k) \right)\|_\infty
- \mu \|\bX_k X_k^{-2} \bX_k\| \|d^k\| - 2 \epsH \|d^k\| \nonumber \\
&\geq \|\bX_k \left(\nabla^2 f(x^k) \bX_k d^k + \nabla \phi_\mu(x^k) \right)\|_\infty
- \mu \|d^k\| - 2 \epsH \|d^k\| \nonumber \\
&> \|\bX_k \left(\nabla^2 f(x^k) \bX_k d^k + \nabla f(x^k) \right) - \mu \bX_k X_k^{-1} e \|_\infty
- c_d \epsH \mu  - 2 c_d \epsg. \label{eq:lemshortdeq1}
\end{align}
Since $\epsH < 1$ and $\mu = \epsg/4$,
we have 
\[
\bar{\zeta} \mu + c_d \epsH \mu + 2 c_d \epsg
\leq \bar{\zeta} \mu + c_d \mu + 2 c_d \epsg
= \mu \left( \bar{\zeta} + 9 c_d \right).
\]
Then, by the definition of $c_d$, $\bar{\zeta} + 9 c_d \leq 1$ so that
$\bar{\zeta} \mu + c_d \epsH \mu + 2 c_d \epsg \leq \mu$. Thus,
by substituting into (\ref{eq:lemshortdeq1}), we obtain
\begin{equation} \label{eq:lemshortdeq2}
\mu > \|\bX_k \left(\nabla^2 f(x^k) \bX_k d^k + \nabla f(x^k) \right) - \mu \bX_k X_k^{-1} e\|_\infty.
\end{equation}
By considering each component $i=1,2,\dotsc, n$ in turn, we now show
that
\begin{equation} \label{eq:lemshortdeq3}
\nabla^2 f(x^k) \bX_k d^k + \nabla f(x^k) > - \mu e.
\end{equation}
When $0 < x^k_i \leq 1$, it follows that $\bar{x}^k_i/x^k_i = 1$, so
\[
\left|\left[\bX_k \left(\nabla^2 f(x^k) \bX_k d^k + \nabla f(x^k) \right)\right]_i - \mu\right| < \mu,
\]
so that
\[
\left[\bX_k \left(\nabla^2 f(x^k) \bX_k d^k + \nabla f(x^k) \right)\right]_i > 0,
\]
establishing (\ref{eq:lemshortdeq3}) for this component $i$.  When
$x^k_i > 1$, we have $\bar{x}^k_i=1$ and $0 < \bar{x}^k_i/x^k_i < 1$,
so from (\ref{eq:lemshortdeq2}), we have
\begin{align*}
-\mu < \left[\bX_k \left(\nabla^2 f(x^k) \bX_k d^k + \nabla f(x^k) \right)\right]_i
- \frac{\bar{x}^k_i}{x^k_i} \mu
& < \left[\bX_k \left(\nabla^2 f(x^k) \bX_k d^k + \nabla f(x^k) \right)\right]_i \\
&= \left[\nabla^2 f(x^k) \bX_k d^k + \nabla f(x^k) \right]_i,
\end{align*}
establishing (\ref{eq:lemshortdeq3}) for this component too.

Finally, using (\ref{eq:fliphessian}), $\mu = \epsg/4$, $\|d^k\| < c_d
\epsH$, $c_d \leq \sqrt{3/(2L_H)}$, and $\epsH^2 = \epsg$, together
with $\|\bX_k\| \leq 1$, we have from (\ref{eq:lemshortdeq3}) that
\begin{align*}
\nabla f(x^{k+1}) &= \nabla f(x^{k+1}) - \nabla^2 f(x^k) \bX_k d^k - \nabla f(x^k)
+ \nabla^2 f(x^k) \bX_k d^k + \nabla f(x^k) \\
&> - \|\nabla f(x^{k+1}) - \nabla^2 f(x^k) \bX_k d^k - \nabla f(x^k)\| e - \mu e \\
&\geq -\frac{L_H}{2} \|\bX_k\|^2 \|d^k\|^2 e -\mu e \\
&> -\left(\frac{L_H}{2} c_d^2 + \frac14\right) \epsg e \geq -\epsg e.
\end{align*}

We now focus on the second condition, $\|\bX_{k+1} \nabla f(x^{k+1})\|
\leq \epsg$.  To begin, we show that
\begin{equation} \label{eq:bk1tobk}
\|\bX_{k+1} \nabla f(x^{k+1})\|_\infty \leq 2\|\bX_k \nabla f(x^{k+1})\|_\infty.
\end{equation}
First, assume that $x^k_i \leq 1$ holds. Then, $\bar{x}_i^k = x_i^k$ so that
$d_i^k = \left(\bar{x}^k_i/x^k_i\right) d_i^k \leq \beta < 1$, so
\[
\bar{x}^{k+1}_i \leq x^{k+1}_i = x_i^k + \bar{x}^k_i d^k_i =  \bar{x}^k_i (1 + d^k_i) < 2\bar{x}^k_i.
\]
When $x^k_i > 1$, we have
\[
\bar{x}^{k+1}_i \leq 1 = \bar{x}^k_i < 2\bar{x}^k_i.
\]
Applying these two cases for each coordinate $i$, we obtain
(\ref{eq:bk1tobk}). Now, recall from the conditions stated at the
start of the proof that $\|X_k^{-1} \bX_k \hat{d}^k\|_\infty < \beta$,
so that $d^k = \hat{d}^k$, where $\hat{d}^k$ is the output of
Algorithm \ref{alg:ccg} at iteration $k$. We thus have for $\hat{r}^k$
defined by \eqref{eq:def.rhat} that
 (\ref{eq:ccgstepSOLres}) holds, by
Lemma \ref{lem:ccgsteps}. 
Therefore, by (\ref{eq:fliphessian}), (\ref{eq:ccgstepSOLres}), (\ref{eq:bk1tobk}),
$\|\bX_k X_k^{-1} e\|_\infty \leq 1$, and $\|\bX_k\| \leq 1$, we have
\begin{align*}
& \|\bX_{k+1} \nabla f(x^{k+1})\|_\infty \\
&\leq 2\| \bX_k \nabla f(x^{k+1})\|_\infty && \text{by (\ref{eq:bk1tobk})} \\
&= 2\|\bX_k \nabla f(x^{k+1}) - \bX_k\nabla f(x^k) + \bX_k\nabla f(x^k)\|_\infty \\
&= 2\|\bX_k \nabla f(x^{k+1}) - \bX_k\nabla f(x^k) - \bX_k \nabla^2 \phi_\mu (x^k) \bX_k d^k\\
&\quad- 2\epsH d^k + \mu \bX_k X_k^{-1} e + \hat{r}^k\|_\infty && \text{by (\ref{eq:def.rhat})}\\
&\leq 2\|\bX_k\left(\nabla f(x^{k+1}) - \nabla f(x^k) - \nabla^2 f(x^k) \bX_k d^k\right)\|_\infty \\
&\quad+ 2 \mu \|\bX_k X_k^{-2} \bX_k d^k\|_\infty + 4 \epsH \|d^k\|_\infty \\
&\quad+ 2\mu \|\bX_k X_k^{-1} e\|_\infty + 2\|\hat{r}^k\|_\infty && \text{by definition of $\phi_\mu$}\\
&\leq 2\|\bX_k\| \|\nabla f(x^{k+1}) - \nabla f(x^k) - \nabla^2 f(x^k) \bX_k d^k\| \\
&\quad+ 2 \mu \|\bX_k X_k^{-2} \bX_k d^k\| + 4 \epsH \|d^k\|
+ 2\mu + 2\|\hat{r}^k\|  && \text{since $\|\bX_k X_k^{-1} e\|_\infty \leq 1$}\\
&\leq L_H \|\bX_k d^k\|^2 + 2 \mu \|\bX_k X_k^{-2} \bX_k\| \|d^k\| \\
&\quad+ 4 \epsH \|d^k\| + 2\mu + \zeta_r \epsH \|d^k\|
&& \text{by (\ref{eq:fliphessian}), (\ref{eq:ccgstepSOLres}), and $\|\bX_k\| \leq 1$} \\
&< L_H c_d^2 \epsg + 2 \mu c_d \epsH + 4 c_d \epsg +
\epsg/2 + \zeta_r c_d \epsg,
\end{align*}
where we used $\|\bX_k\| \leq 1$, $\| \bX_k^{-1} X_k \| \le 1$,
$\|d^k\| < c_d \epsH$, $\epsH^2 = \epsg$,
and $\mu = \epsg/4$ for the last inequality.
Finally, since $\epsH < 1$, $c_d \leq 1$, and
$c_d \leq 1/\left(2\left(L_H + 9/2 + \zeta_r \right) \right)$,
it follows that
\begin{align*}
\|\bX_{k+1} \nabla f(x^{k+1})\|_\infty &< L_H c_d^2 \epsg + 2 \mu c_d \epsH + 4 c_d \epsg
+ \epsg/2 + \zeta_r c_d \epsg\\
&\leq L_H c_d \epsg + 2 \mu c_d + 4 c_d \epsg + \epsg/2 + \zeta_r c_d \epsg\\
&\leq L_H c_d \epsg + c_d \epsg/2 + 4 c_d \epsg + \epsg/2 + \zeta_r c_d \epsg \\
&\leq c_d \epsg \left( L_H + 9/2 + \zeta_r \right) + \epsg/2 \\
&\leq \epsg/2 + \epsg/2 = \epsg,
\end{align*}
completing the proof.
\end{proof}

Lemma~\ref{lem:shortdsmallKKT} is useful in the following line search
argument, because we need only consider cases in which $\|d^k\| \geq
c_d \epsH$. We now show that a sufficiently long step
is taken whenever d\_type=SOL and $x^{k+1}$ does not satisfy the
approximate first-order conditions (\ref{eq:epsgradpos}) and
(\ref{eq:epscompliment}).
\begin{lemma} \label{lem:LSccgSOL}
Suppose that \refer{Assumption~\ref{assum:fC22}} holds.
Suppose that at iteration $k$ of Algorithm~\ref{alg:lbncg}, we have either
$[\nabla f(x^k)]_i \leq -\epsg$ for some coordinate $i$ or
$\|\bX_k \nabla f(x^k)\|_\infty \geq \epsg$, so that Algorithm~\ref{alg:ccg} is called.
When Algorithm~\ref{alg:ccg} outputs a direction $\hat{d}^k$ with d\_type=SOL,
then either
\begin{enumerate}[label=(\Alph*)]
\item the backtracking line search terminates with $\alpha_k = 1$ and
both (\ref{eq:epsgradpos}) and (\ref{eq:epscompliment}) hold at $x^{k+1}$, or
\item the backtracking line search requires at most $j_k \le \jsol+1$ iterations,
where
\begin{equation} \label{eq:ccglsitsSOL}
\jsol \; = \; \left[ \frac12 \log_{\theta}\left( \frac{6(1-\beta)^2}
{(L_H + \eta)(1-\beta)^2 + (2 - \beta)} \; \frac{\epsH^2}{1.1(U_g + \mu \sqrt{n})} \right)
\right]_+,
\end{equation}
and 
\begin{equation}
\refer{\alpha_k \|d^k\| \geq \csol \epsH,}
\end{equation}
where 
\begin{equation*}
\csol = \min\left\{c_d, \frac{6(1-\beta)^2\theta^2}
{(L_H + \eta)(1-\beta)^2 + (2 - \beta)}\right\},
\end{equation*}
and $c_d$ is defined in (\ref{eq:dnorm}).
\end{enumerate}
\end{lemma}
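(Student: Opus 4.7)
My plan is to split the argument according to whether the unit step $\alpha_k = 1$ is accepted by the line search. If it is, Lemma~\ref{lem:shortdsmallKKT} provides the required dichotomy: either the first-order conditions \eqref{eq:epsgradpos}--\eqref{eq:epscompliment} hold at $x^{k+1}$, placing us in case (A); or $\|d^k\| \ge c_d\epsH$, in which case $j_k = 0 \le \jsol + 1$ and $\alpha_k\|d^k\| = \|d^k\| \ge c_d\epsH \ge \csol\epsH$, placing us in case (B). The remainder of the proof treats the situation $j_k \ge 1$.

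The main calculation combines the Taylor bound \eqref{eq:f2smooth} applied to $f$ with the log-barrier estimate of Lemma~\ref{lem:logbound} applied to the increment $\alpha d^k$, which is valid for $\alpha \in [0,1]$ in the SOL case since $\|X_k^{-1}\bX_k\alpha d^k\|_\infty \le \alpha\beta \le \beta$. Collecting terms and using $\bX_k(\nabla^2 f(x^k)+\mu X_k^{-2})\bX_k = \bX_k \nabla^2\phi_\mu(x^k)\bX_k$, I obtain
\[
\phi_\mu(x^k+\alpha\bX_k d^k) - \phi_\mu(x^k) \le \alpha (d^k)^\top \bX_k\nabla\phi_\mu(x^k) + \tfrac{\alpha^2}{2}(d^k)^\top \bX_k\nabla^2\phi_\mu(x^k)\bX_k d^k + \tfrac{\alpha^3}{6}\Bigl(L_H+\tfrac{\mu(2-\beta)}{(1-\beta)^2}\Bigr)\|d^k\|^3.
\]
Substituting the SOL identity \eqref{eq:ccgstepSOLgradcond} (with $\gamma_k \ge 1$) and the curvature lower bound \eqref{eq:ccgstepSOLcurv}, the first two terms on the right are bounded above, for $\alpha \le 1$, by $-\tfrac{\alpha}{2}\epsH\|d^k\|^2 - \alpha^2\epsH\|d^k\|^2$. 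Demanding that the full upper bound be strictly less than $-\tfrac{\eta}{6}\alpha^3\|d^k\|^3$ yields a sufficient condition quadratic in $\alpha$; after using $\mu \le 1$ to bound $\mu(2-\beta)/(1-\beta)^2 \le (2-\beta)/(1-\beta)^2$ and invoking \eqref{eq:ccgstepSOLnorm} to substitute $\|d^k\| \le 1.1(U_g+\mu\sqrt{n})/\epsH$, the sufficient condition becomes precisely $\alpha^2 \le $ the argument appearing inside the $\log_\theta$ defining $\jsol$. Since $\alpha_k^2 = \theta^{2j_k}$, this gives $j_k \le \jsol + 1$.

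The lower bound $\alpha_k\|d^k\| \ge \csol\epsH$ is obtained by reversing the same Taylor argument at the \emph{failed} step $\alpha = \theta^{j_k-1}$. The failure of sufficient decrease reads
\[
\alpha^3\|d^k\|^3\Bigl(\tfrac{L_H+\mu(2-\beta)/(1-\beta)^2+\eta}{6}\Bigr) \ge \tfrac{\alpha}{2}\epsH\|d^k\|^2 + \alpha^2\epsH\|d^k\|^2;
\]
discarding the $\tfrac{\alpha}{2}\epsH\|d^k\|^2$ term and dividing through by $\alpha^2\|d^k\|^2$ gives $\alpha\|d^k\| \ge 6\epsH(1-\beta)^2/\bigl((L_H+\eta)(1-\beta)^2+(2-\beta)\bigr)$ (using $\mu \le 1$ once more). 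Multiplying by $\theta$ and using $\theta \ge \theta^2$ then delivers $\alpha_k\|d^k\| \ge \csol\epsH$ directly.

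The main obstacle is the careful bookkeeping of constants: in particular, choosing which of the two pieces $-\tfrac{\alpha}{2}\epsH\|d^k\|^2$ and $-\alpha^2\epsH\|d^k\|^2$ to retain at each step so that the $(1-\beta)^2$ factors and the constant $6$ appearing in both $\jsol$ and $\csol$ emerge correctly, and correctly matching the quadratic-in-$\alpha$ structure to the factor $\tfrac12$ inside the $\log_\theta$. Each individual manipulation is elementary, but the algebra is mechanically fiddly.
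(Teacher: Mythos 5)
Your overall architecture is the paper's: dispose of the $\alpha_k=1$ case via Lemma~\ref{lem:shortdsmallKKT}, then combine \eqref{eq:f2smooth} with Lemma~\ref{lem:logbound} to get the expansion \eqref{eq:phitaylorexpansion}, substitute the SOL identities \eqref{eq:ccgstepSOLcurv}--\eqref{eq:ccgstepSOLgradcond}, and read off $\jsol$ and $\csol$ from the failed backtracking steps. Your derivation of the step-length bound is a valid variant that is in fact slightly more direct than the paper's: you retain the $\alpha^2\epsH\|d^k\|^2$ piece at the failed index $j_k-1$ and divide by $\alpha^2\|d^k\|^2$ to get $\alpha\|d^k\|\ge \epsH/C$ in one step, whereas the paper retains only the linear piece, obtains $\theta^{2(j_k-1)}\ge \epsH/(C\|d^k\|)$, and must then combine the resulting square root with the separate lower bound \eqref{eq:sollseq2} on $\|d^k\|$ (evaluated at $j=0$) to eliminate $\|d^k\|^{1/2}$. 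Both routes land on the same $\csol$.

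There is, however, a concrete slip in your derivation of $\jsol$. The exact value of the first two terms after substituting \eqref{eq:ccgstepSOLgradcond} is $-\alpha(\gamma_k-\alpha/2)\,(d^k)^\top(H+2\epsH I)d^k-\alpha^2\epsH\|d^k\|^2$, and your proposed upper bound $-\tfrac{\alpha}{2}\epsH\|d^k\|^2-\alpha^2\epsH\|d^k\|^2$ is correct but keeps only half of the available linear coefficient (you bound $\gamma_k-\alpha/2\ge 1/2$ rather than exploiting $\gamma_k\ge 1$). For the iteration count you must then discard the $\alpha^2$ piece (it is negligible for small $\alpha$), so failure of sufficient decrease only yields $C\alpha^2\|d^k\|\ge \epsH/2$ rather than $C\alpha^2\|d^k\|\ge\epsH$. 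The resulting sufficient condition is therefore \emph{not} "precisely" the argument inside the $\log_\theta$ in \eqref{eq:ccglsitsSOL}; it is that argument divided by $2$, which (since $\theta<1$) gives a larger threshold index, off by the additive constant $\tfrac{\ln 2}{2\ln(1/\theta)}$ — unbounded as $\theta\to 1$. The fix is exactly the paper's bookkeeping: bound $-\alpha(\gamma_k-\alpha/2)Q\le -\alpha\gamma_k\epsH\|d^k\|^2+\tfrac{\alpha^2}{2}\epsH\|d^k\|^2$ with $Q=(d^k)^\top(H+2\epsH I)d^k\ge\epsH\|d^k\|^2$, so that after absorbing $+\tfrac{\alpha^2}{2}\epsH\|d^k\|^2$ into $-\alpha^2\epsH\|d^k\|^2$ the retained linear term is $-\alpha\gamma_k\epsH\|d^k\|^2\le-\alpha\epsH\|d^k\|^2$ with coefficient $1$, which reproduces \eqref{eq:ccglsitsSOL} exactly. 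With that correction your argument is complete.
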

\begin{proof}
This result follows by largely the same argument as that of the proof of \cite[Lemma~13]{CWRoyer_SJWright_2018}.
The main difference is due to the result of Lemma \ref{lem:logbound} which,
together with (\ref{eq:f2smooth}), implies
\begin{equation} \label{eq:phitaylorexpansion}
\phi_\mu(x^k + \theta^j \bX_k d^k) - \phi_\mu(x^k) \leq \theta^j g^\top d^k
+ \frac{\theta^{2j}}{2} (d^k)^\top H d^k + \frac{L_H(1-\beta)^2 + (2-\beta)}{6(1-\beta)^2} \theta^{3j} \|d^k\|^3,
\end{equation}
where the notation $g = \bX_k \nabla \phi_\mu(x^k)$ and $H = \bX_k \nabla^2 \phi_\mu(x^k) \bX_k$ is used once more.
Replacing the Taylor series expansion around $f$ in the proof of \cite[Lemma~13]{CWRoyer_SJWright_2018}
with this expression yields the result. We provide a full proof in Appendix \ref{app:proof.LSccgSOL}.
\end{proof}

Now we show that a sufficiently long step always occurs when
d\_type=NC.
\begin{lemma} \label{lem:LSccgNC}
Suppose that \refer{Assumption~\ref{assum:fC22}} holds.
Suppose that at iteration $k$ of Algorithm~\ref{alg:lbncg}, we have either
$[\nabla f(x^k)]_i \leq -\epsg$ for some coordinate $i$ or\\
$\|\bX_k \nabla f(x^k)\|_\infty \geq \epsg$, so that Algorithm~\ref{alg:ccg} is called.
When Algorithm~\ref{alg:ccg} outputs a direction $\hat{d}^k$ with d\_type=NC,
then the backtracking line search requires at most $j_k \le \jnc+1$ iterations,
where
\begin{equation} \label{eq:ccglsitsNC}
\jnc \; = \; \left[ \log_{\theta}\left(
  \frac{3(1-\beta)^2}{(L_H + \eta) (1-\beta)^2 + (2 - \beta)} \right) \right]_+,
\end{equation}
and 
\begin{equation}
\refer{\alpha_k \|d^k\| \geq \cnc \epsH,}
\end{equation}
where 
\begin{equation*}
\cnc = \min\left\{1, \frac{3(1-\beta)^2 \theta}
{(L_H + \eta) (1-\beta)^2 + (2 - \beta)} \right\}.
\end{equation*}
\end{lemma}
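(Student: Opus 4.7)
The plan is to adapt the argument of Lemma \ref{lem:LSccgSOL} to the NC case, reusing the third-order Taylor-like expansion derived there. Specifically, with the shorthand $g = \bX_k \nabla \phi_\mu(x^k)$ and $H = \bX_k \nabla^2 \phi_\mu(x^k)\bX_k$, the inequality \eqref{eq:phitaylorexpansion}, which combines \eqref{eq:f2smooth} applied to $f$ with Lemma \ref{lem:logbound} applied to the log-barrier term, gives
\[
\phi_\mu(x^k + \theta^j \bX_k d^k) - \phi_\mu(x^k) \leq \theta^j g^\top d^k + \frac{\theta^{2j}}{2}(d^k)^\top H d^k + \frac{L_H(1-\beta)^2 + (2-\beta)}{6(1-\beta)^2}\theta^{3j}\|d^k\|^3.
\]

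Next, I would substitute the NC-branch conclusions from the second part of Lemma \ref{lem:ccgsteps}. The inequality $g^\top d^k \leq 0$ lets me discard the first-order term, and \eqref{eq:ccgstepNC} gives the cubic curvature bound $(d^k)^\top H d^k \leq -\|d^k\|^3$. Substituting produces
\[
\phi_\mu(x^k + \theta^j \bX_k d^k) - \phi_\mu(x^k) \leq \left(-\frac{\theta^{2j}}{2} + \frac{L_H(1-\beta)^2 + (2-\beta)}{6(1-\beta)^2}\theta^{3j}\right)\|d^k\|^3,
\]
which depends on $d^k$ only through $\|d^k\|^3$. Comparing this with the Armijo target $-\tfrac{\eta}{6}\theta^{3j}\|d^k\|^3$ in \eqref{eq:lsdecreasedamped} and dividing through by $\theta^{2j}\|d^k\|^3 > 0$ reduces the sufficient-decrease condition to a scalar inequality in $\theta^j$ alone. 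Routine algebra produces the explicit threshold $\theta^j \le 3(1-\beta)^2/[(L_H+\eta)(1-\beta)^2 + (2-\beta)]$, so the smallest nonnegative integer $j_k$ chosen by the backtracking loop satisfies $j_k \le \jnc + 1$, with the $[\cdot]_+$ in \eqref{eq:ccglsitsNC} handling the case in which the threshold already exceeds $1$ and $j_k = 0$ is accepted immediately.

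The step-length lower bound then follows by combining $\alpha_k = \theta^{j_k} \ge \min\{1,\theta\cdot 3(1-\beta)^2/[(L_H+\eta)(1-\beta)^2+(2-\beta)]\}$ with the bound $\|d^k\| \ge \epsH$ from \eqref{eq:ccgstepNC}, yielding $\alpha_k \|d^k\| \ge \cnc \epsH$ as claimed. I do not expect a substantive obstacle: the structure of the argument parallels the NC analysis in \cite{CWRoyer_SJWright_2018}, and the only genuinely new ingredient is the log-barrier correction $(2-\beta)/[6(1-\beta)^2]$ in the cubic coefficient, which is already absorbed into \eqref{eq:phitaylorexpansion} and simply propagates into the denominator of the threshold appearing in \eqref{eq:ccglsitsNC}. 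The main bookkeeping task is verifying the $[\cdot]_+$ edge case and confirming that the min in the definition of $\cnc$ correctly accounts for whether the unit step $\alpha_k = 1$ already satisfies \eqref{eq:lsdecreasedamped}.
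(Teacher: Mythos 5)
Your proposal is correct and follows essentially the same route as the paper's proof in Appendix~\ref{app:proof.LSccgNC}: apply \eqref{eq:phitaylorexpansion} (i.e., \eqref{eq:f2smooth} plus Lemma~\ref{lem:logbound}), drop the first-order term via $g^\top d^k \le 0$, use the cubic curvature bound $(d^k)^\top H d^k \le -\|d^k\|^3$ from \eqref{eq:ccgstepNC} to reduce the failed-decrease condition to $\theta^j \ge 3(1-\beta)^2/[(L_H+\eta)(1-\beta)^2+(2-\beta)]$, and then combine $\theta^{j_k} \ge \theta\,\theta^{j_k-1}$ with $\|d^k\|\ge\epsH$. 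The edge cases you flag (the $[\cdot]_+$ and the unit-step acceptance, which the paper handles separately at the outset using $\|d^k\|\ge\epsH$) are exactly the remaining bookkeeping and pose no difficulty.
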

\begin{proof}
This result follows from the same argument as the proof of \cite[Lemma~1]{CWRoyer_SJWright_2018}.
The main difference in the proof once again revolves around the use of (\ref{eq:phitaylorexpansion})
in place of the Taylor expansion around $f$. A full proof is provided in Appendix \ref{app:proof.LSccgNC}.
\end{proof}

\swmodify{Next, we bound the maximum decrease in the logarithmic terms
  over the iterations of Algorithm~\ref{alg:lbncg}.}
\begin{lemma} \label{lem:logdecrease}
\swmodify{Let $\omega$ be such that $\| x^0 \|_{\infty} \le
  \omega$. Then for any $k \ge 0$, we have
\begin{equation} \label{eq:logdecrease}
\sum_{i=1}^n \left( -\log x^{k+1}_i + \log x^0_i \right)
\geq - n\left(\log \omega -\min_i \log x^0_i \right)
-\frac{\sqrt{n}}{\omega} \sum_{j=0}^k \alpha_j \|d^j\|.
\end{equation}
}
\end{lemma}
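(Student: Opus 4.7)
The plan is to exploit concavity of $\log$ via the tangent-line inequality at $\omega$, namely $\log x \leq \log \omega + (x-\omega)/\omega$ for all $x>0$, which rearranges to $-\log x \geq -\log \omega + 1 - x/\omega$. Applied coordinate-wise to $x^{k+1}$ (which is strictly positive by construction of the algorithm, since every step satisfies $\|X_j^{-1}\bar{X}_j d^j\|_\infty \leq \beta < 1$), and summed over $i=1,\dots,n$, this gives
\[
-\sum_{i=1}^n \log x_i^{k+1} \;\geq\; -n \log \omega + n - \tfrac{1}{\omega}\|x^{k+1}\|_1.
\]
Adding $\sum_i \log x_i^0 \geq n \min_i \log x_i^0$ to both sides yields a lower bound whose only remaining problematic term is $-\|x^{k+1}\|_1/\omega$.

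Next I would control $\|x^{k+1}\|_1$ using the update rule $x^{k+1} = x^0 + \sum_{j=0}^k \alpha_j \bX_j d^j$. The triangle inequality gives $\|x^{k+1}\|_1 \leq \|x^0\|_1 + \sum_{j=0}^k \alpha_j \|\bX_j d^j\|_1$, and then the standard norm relation $\|\cdot\|_1 \leq \sqrt{n}\|\cdot\|_2$ together with $\|\bX_j\| \leq 1$ reduces this to $\|x^{k+1}\|_1 \leq \|x^0\|_1 + \sqrt{n}\sum_{j=0}^k \alpha_j \|d^j\|$. Finally, $\|x^0\|_1 \leq n\|x^0\|_\infty \leq n\omega$, so
\[
\tfrac{1}{\omega}\|x^{k+1}\|_1 \;\leq\; n + \tfrac{\sqrt{n}}{\omega}\sum_{j=0}^k \alpha_j \|d^j\|.
\]
Plugging this back into the earlier lower bound, the constant $+n$ from the tangent-line step cancels exactly with the $-n$ here, leaving precisely the right-hand side of \eqref{eq:logdecrease}.

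The steps are all routine; the only real decision is the choice of linearization point $\omega$, which is what lets us trade the factor $1/\omega$ against the additive $n\log\omega$ term. No step is an obstacle, and the ordering is: (i) write the tangent-line inequality for $-\log$; (ii) sum it over coordinates at $x^{k+1}$; (iii) telescope $\|x^{k+1}\|_1$ back to $\|x^0\|_1$ plus step contributions using $\|\bX_j\|\leq 1$ and $\|\cdot\|_1\leq \sqrt n\|\cdot\|_2$; (iv) use $\|x^0\|_\infty \leq \omega$ to absorb the additive $+n$; (v) lower bound $\sum_i \log x_i^0$ by $n\min_i \log x_i^0$ and collect.
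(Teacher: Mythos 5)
Your proof is correct, and it takes a genuinely different route from the paper's. The paper argues coordinate by coordinate and iteration by iteration: for each $i$ it splits into three cases according to whether $x_i^{k+1}$ and $x_i^k$ lie above or below $\omega$, uses $\log(1+t)\le t$ on the multiplicative form of each update, and has to introduce a crossing index $\bar k$ (the first iteration after which $x_i^j$ stays above $\omega$) to telescope the per-step losses correctly. You instead apply the concavity tangent bound $-\log x \ge -\log\omega + 1 - x/\omega$ only once, at the final iterate, and then control $\|x^{k+1}\|_1$ by telescoping the additive updates $x^{k+1}=x^0+\sum_{j=0}^k\alpha_j\bX_j d^j$, using $\|\bX_j\|\le 1$, $\|\cdot\|_1\le\sqrt n\|\cdot\|_2$, and $\|x^0\|_1\le n\omega$. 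The exact cancellation of the $+n$ from the tangent line against the $-n$ from $\|x^0\|_1/\omega$ is the right observation, and your bound reproduces the paper's constants exactly. What your argument buys is brevity and the elimination of the case analysis and crossing-index bookkeeping; what the paper's per-iteration argument buys is a slightly more local statement (a bound of the form \eqref{eq:qk1} for each coordinate separately, which is what they actually sum), but for the stated lemma your global linearization is entirely sufficient.
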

\begin{proof}
  \swmodify{We focus on a single coordinate $i$, and show that the
    following holds for any $k \ge 0$:
    \begin{equation} \label{eq:qk1}
       -\log x^{k+1}_i + \log x^0_i \ge - \log \omega + \log
       x^0_i  - \frac{1}{\omega} \sum_{j=0}^k \alpha_j |d^j_i|.
    \end{equation}
    We consider three cases.
    \begin{itemize}
      \item[1:] $x^{k+1}_i \le \omega$. Here we have $-\log x^{k+1}_i
        \geq - \log \omega $, so \eqref{eq:qk1} is satisfied
        trivially.
      \item[2:] $x^{k+1}_i > \omega$ and $x^k_i \le \omega$. Here, we have
\begin{align*}
-\log x_i^{k+1}  = -\log\left(x_i^k + \alpha_k \bar{x}_i^k d_i^k\right)
&\geq -\log\left(\omega + \alpha_k \bar{x}_i^k d_i^k\right) \\
&= -\log\left(\omega\left(1 + \frac{1}{\omega} \alpha_k \bar{x}_i^k d_i^k\right)\right) \\
&= -\log \omega -\log\left(1 + \frac{1}{\omega} \alpha_k \bar{x}_i^k d_i^k\right) \\
&\geq -\log \omega -\frac{1}{\omega} \alpha_k \bar{x}_i^k d_i^k \\
&\geq -\log \omega - \frac{1}{\omega} \alpha_k |d_i^k|,
\end{align*}
where the second to last inequality follows by $\log(1+x) \leq x$ and
the last by $\bar{x}_i^k \leq 1$. Therefore, we have
\begin{equation} \label{eq:logdecreasexcross}
-\log(x_i^{k+1}) + \log(x_i^0) \geq -\log(\omega) + \log(x^0_i) - \frac{1}{\omega} \alpha_k |d_i^k|,
\end{equation}
so \eqref{eq:qk1} is satisfied again.
\item[3:] $x^{k+1}_i>\omega$ and $x^k_i> \omega$. For this case, we have
  \begin{align}
    \nonumber
-\log(x_i^{k+1}) = -\log\left(x_i^k + \alpha_k \bar{x}_i^k d_i^k\right)
&= -\log\left(x_i^k \left(1 + \alpha_k \frac{\bar{x}_i^k}{x_i^k} d_i^k\right)\right) \\
\nonumber
&= -\log(x_i^k)-\log\left(1 + \alpha_k \frac{\bar{x}_i^k}{x_i^k} d_i^k\right) \\
\nonumber
&\geq -\log(x_i^k)-\alpha_k \frac{\bar{x}_i^k}{x_i^k} d_i^k \\
\label{eq:wh8}
&\geq -\log(x_i^k)- \frac{1}{\omega} \alpha_k |d_i^k|,
\end{align}
where the second to last inequality follows by $\log(1+x) \leq x$ and
the last by $\bar{x}_i^k \leq 1$ and $x_i^k \geq \omega$.  We define
$\bar{k}$ to be the smallest index such that $x^j_i > \omega$ for all
$j=\bar{k}, \bar{k}+1, \dotsc, k+1$. We have that $\bar{k}$ exists,
and lies in the range $\{1,2,\dotsc,k\}$. Moreover, we have that
\begin{equation} \label{eq:wh9}
  x^{\bar{k}}_i > \omega, \quad x^{\bar{k}-1}_i \le \omega.
  \end{equation}
Since \eqref{eq:wh8} holds when $k$ is replaced by any
$j=\bar{k},\dotsc,k$, we have
\begin{equation} \label{eq:wh1}
  -\log x_i^{k+1} +  \log x^{\bar{k}}_i
  = \sum_{j=\bar{k}}^k \left( -\log x^{j+1}_i + \log x^j_i \right) \ge -\frac{1}{\omega} \sum_{j=\bar{k}}^k \alpha_j | d^j_i|.
  \end{equation}
Since $\bar{k}-1$ is in Case 2, because of \eqref{eq:wh9}, we have
\[
-\log x^{\bar{k}}_i \ge -\log \omega - \frac{1}{\omega} \alpha_{\bar{k}-1} |d^{\bar{k}-1}_i|.
\]
By adding this expression to \eqref{eq:wh1}, and adding $\log x^0_i$ to both sides, we obtain
\[
-\log x^{k+1}_i + \log x^0_i \ge - \log \omega + \log
x^0_i  - \frac{1}{\omega} \sum_{j=\bar{k}-1}^k \alpha_j |d^j_i|,
\]
which  implies \eqref{eq:qk1}.
    \end{itemize}
    }

\swmodify{
By summing \eqref{eq:qk1} over all coordinates $i$, we obtain
\begin{align*}
\sum_{i=1}^n \left(-\log(x^{k+1}_i) + \log(x^0_i)\right)
&\geq -\sum_{i=1}^n \left(\log(\omega)-\log(x_i^0)\right)
-\frac{1}{\omega} \sum_{j=0}^k \sum_{i=1}^n \alpha_j |d_i^j|\\
&= -\sum_{i=1}^n \left(\log(\omega)-\log(x_i^0)\right)
-\frac{1}{\omega} \sum_{j=0}^k \alpha_j \|d^j\|_1 \\
&\geq - n\left(\log(\omega)-\min_i \, \log(x_i^0)\right)
-\frac{\sqrt{n}}{\omega}\sum_{j=0}^k \alpha_j \|d^j\|,
\end{align*}
which proves the result.
}
\end{proof}

Now we are ready to bound the maximum number of iterations of
Algorithm~\ref{alg:lbncg} that can occur before the approximate
first-order optimality conditions \eqref{eq:xstrictfeas},
\eqref{eq:epsgradpos}, and \eqref{eq:epscompliment} are satisfied.
\begin{theorem} \label{thm:wcc1}
Let \refer{Assumptions~\ref{assum:fC22} and \ref{assum:flow}}
hold. Then, some iterate $x^k$ generated by Algorithm~\ref{alg:lbncg},
where $k=0,1,\dotsc,\bar{K}_1+1$ and
\refer{
\begin{align*}
\bar{K}_1 & := \left\lceil \frac{12\left(\mu n \left(\log(\omega_1)
-\min_i \log(x_i^0)\right) +f(x^0) - \flow\right)}
{\eta \call^3} 
\epsg^{-3/2} \right\rceil, \\
\omega_1 & := \max\left\{\frac{3 \sqrt{n}}{\eta \call^2}, \,
\|x^0\|_{\infty} \right\},\\
\call & := \min\{\csol,\cnc\},
\end{align*}}
will satisfy the conditions
\begin{equation} \label{eq:eps1ON}
\nabla f(x^k)  \geq -\epsg e, \quad
\|\bX_k \nabla f(x^k) \|_\infty \leq \epsg.
\end{equation}
\end{theorem}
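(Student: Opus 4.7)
The plan is to run a potential function argument based on $\phi_\mu$, with the log-barrier decrease from Lemma~\ref{lem:logdecrease} providing the essential bookkeeping needed to offset the growth of $\sum_j \alpha_j \| d^j \|$ that arises when iterates drift far from the boundary.

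The proof proceeds by contradiction: suppose no $x^k$ for $k \in \{0,1,\dotsc,\bar{K}_1+1\}$ satisfies \eqref{eq:eps1ON}. Then at every iteration $k \in \{0,1,\dotsc,\bar{K}_1\}$, Algorithm~\ref{alg:ccg} is invoked. Lemma~\ref{lem:LSccgSOL} rules out case (A) (since $x^{k+1}$ would then satisfy the conditions, contradicting the assumption), so case (B) holds. Combined with Lemma~\ref{lem:LSccgNC} for d\_type=NC, we conclude that $\alpha_k \|d^k\| \ge \call \epsH$ for every such $k$. Using the line search inequality \eqref{eq:lsdecreasedamped} telescopically and the identity $\alpha_k^3 \| d^k \|^3 \ge (\call \epsH)^2 \cdot \alpha_k \| d^k \| = \call^2 \epsg \cdot \alpha_k \| d^k \|$, we obtain
\[
\phi_\mu(x^0) - \phi_\mu(x^{\bar{K}_1+1}) \;\ge\; \frac{\eta}{6} \sum_{k=0}^{\bar{K}_1} \alpha_k^3 \|d^k\|^3 \;\ge\; \frac{\eta \call^2 \epsg}{6} \sum_{k=0}^{\bar{K}_1} \alpha_k \|d^k\|.
\]

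Next I would upper bound the left side. By Assumption~\ref{assum:flow}, $f(x^{\bar{K}_1+1}) \ge \flow$, so that
\[
\phi_\mu(x^0) - \phi_\mu(x^{\bar{K}_1+1}) \;\le\; f(x^0) - \flow + \mu \sum_{i=1}^n \bigl( -\log x^{\bar{K}_1+1}_i + \log x^0_i \bigr) \cdot (-1),
\]
and applying Lemma~\ref{lem:logdecrease} with $\omega = \omega_1$ (which satisfies $\omega_1 \ge \|x^0\|_\infty$ by construction) yields
\[
\phi_\mu(x^0) - \phi_\mu(x^{\bar{K}_1+1}) \;\le\; f(x^0) - \flow + \mu n \bigl(\log \omega_1 - \min_i \log x^0_i \bigr) + \frac{\mu \sqrt{n}}{\omega_1} \sum_{k=0}^{\bar{K}_1} \alpha_k \|d^k\|.
\]

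The crucial step is the absorption of the offending $\sum_k \alpha_k \| d^k \|$ term. Substituting $\mu = \epsg/4$ and using $\omega_1 \ge 3\sqrt{n}/(\eta \call^2)$ gives $\mu \sqrt{n}/\omega_1 \le \eta \call^2 \epsg / 12$, so combining the two inequalities,
\[
\frac{\eta \call^2 \epsg}{12} \sum_{k=0}^{\bar{K}_1} \alpha_k \|d^k\| \;\le\; f(x^0) - \flow + \mu n \bigl(\log \omega_1 - \min_i \log x^0_i\bigr).
\]

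Finally, since each summand satisfies $\alpha_k \|d^k\| \ge \call \epsH = \call \epsg^{1/2}$, there are $\bar{K}_1 + 1$ such terms, so
\[
\frac{\eta \call^3 \epsg^{3/2}}{12} (\bar{K}_1 + 1) \;\le\; f(x^0) - \flow + \mu n \bigl(\log \omega_1 - \min_i \log x^0_i \bigr),
\]
which rearranges to $\bar{K}_1 + 1 \le \frac{12(f(x^0) - \flow + \mu n (\log \omega_1 - \min_i \log x^0_i))}{\eta \call^3} \epsg^{-3/2}$, contradicting the definition of $\bar{K}_1$. The main obstacle is correctly choosing $\omega_1$: the log-barrier decrease is bounded in Lemma~\ref{lem:logdecrease} by a term proportional to $\sum \alpha_j \|d^j\|$, and we need $\omega_1$ large enough that this term is dominated by (half of) the left-hand side — but also bookkeeping-consistent with $\omega_1 \ge \|x^0\|_\infty$ — hence the max in the definition of $\omega_1$.
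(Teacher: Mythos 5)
Your proposal is correct and follows essentially the same route as the paper's proof: contradiction, the uniform lower bound $\alpha_k\|d^k\|\ge\call\epsH$ from Lemmas~\ref{lem:LSccgSOL} and~\ref{lem:LSccgNC}, telescoping of \eqref{eq:lsdecreasedamped}, Lemma~\ref{lem:logdecrease} with $\omega=\omega_1$, and absorption of the $\sum_k\alpha_k\|d^k\|$ term via the choice of $\omega_1$. Your algebra is organized slightly differently (you apply $\alpha_k^3\|d^k\|^3\ge\call^2\epsg\,\alpha_k\|d^k\|$ up front rather than factoring $\alpha_k^2\|d^k\|^2-\call^2\epsH^2/2$ afterward), but it lands on the identical constant $\eta\call^3\epsg^{3/2}/12$ and the same contradiction.
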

\begin{proof}
Suppose for contradiction that at least one of the conditions in
\eqref{eq:eps1ON} is violated for all $k=0,1,\dotsc,\bar{K}_1+1$, so
that case A of Lemma~\ref{lem:LSccgSOL} does not occur for all
$k=0,1,\dotsc,\bar{K}_1$.  Algorithm~\ref{alg:ccg} will be invoked at
each of the first $\bar{K}_1+1$ iterates of
Algorithm~\ref{alg:lbncg}. For each iteration $l=0,1,\dotsc,\bar{K}_1$
for which Algorithm~\ref{alg:ccg} returns d\_type=SOL, we have from
Lemma~\ref{lem:LSccgSOL}, and the fact that case A does not occur,
that
\refer{$\alpha_k \|d^k\| \geq \csol \epsH$}.
For each iteration $l=0,1,\dotsc,\bar{K}_1$ for which
Algorithm~\ref{alg:ccg} returns d\_type=NC, we have by
 Lemma~\ref{lem:LSccgNC} that
\refer{$\alpha_k \|d^k\| \geq \cnc \epsH$}.
Thus, for either type of step, we have 
\refer{
\begin{equation} \label{eq:wcc1steplength}
\alpha_k \|d^k\| \geq \min\{\csol,\cnc\} \epsH = \call \epsH.
\end{equation}}

Now, by (\ref{eq:lsdecreasedamped}), we have
\[
-\frac{\eta}{6}\alpha_k^3 \|d^k\|^3 \geq \phi_\mu(x^{k+1}) - \phi_\mu(x^k)
= f(x^{k+1}) - f(x^k) + \mu \sum_{i=1}^n \left(-\log(x_i^{k+1}) + \log(x_i^k)\right). 
\]
By summing this bound over $k=0,1,\dotsc,\bar{K}_1$, and telescoping
both terms on the right-hand size, we obtain
\[
-\frac{\eta}{6}\sum_{k=0}^{\bar{K}_1} \alpha_k^3 \|d^k\|^3 \geq f(x^{\bar{K}_1+1}) - f(x^0)
+ \mu \sum_{i=1}^n \left(-\log(x_i^{\bar{K}_1+1}) + \log(x_i^0)\right). 
\]
\refer{By applying Lemma \ref{lem:logdecrease} with $\omega = \omega_1$, we
have
\begin{equation} \label{eq:wcc1eq1}
-\frac{\eta}{6}\sum_{k=0}^{\bar{K}_1} \alpha_k^3 \|d^k\|^3 \geq
f(x^{\bar{K}_1+1}) - f(x^0) - \mu n \left(\log(\omega_1)-\min_i \log(x^0_i)\right)
-\mu \frac{\sqrt{n}}{\omega_1} \sum_{k=0}^{\bar{K}_1} \alpha_k \|d^k\|. 
\end{equation}
From the definition of $\omega_1$, we obtain
\[
-\mu \frac{\sqrt{n}}{\omega_1} \sum_{k=0}^{\bar{K}_1} \alpha_k \|d^k\|
\geq -\frac{\mu \eta \call^2}{3} \sum_{k=0}^{\bar{K}_1} \alpha_k \|d^k\| 
= -\frac{\eta \call^2 \epsH^2}{12} \sum_{k=0}^{\bar{K}_1} \alpha_k \|d^k\|,
\]
where the final equality is due to $\mu = \epsg/4 = \epsH^2/4$. It
follows that
\begin{align*}
\frac{\eta}{6} \sum_{k=0}^{\bar{K}_1} \alpha_k^3 \|d^k\|^3 - \mu \frac{\sqrt{n}}{\omega_1}
\sum_{k=0}^{\bar{K}_1} \alpha_k \|d^k\|
&\geq\frac{\eta}{6} \sum_{k=0}^{\bar{K}_1} \alpha_k \|d^k\| \left(\alpha_k^2 \|d^k\|^2 
-\frac{\call^2 \epsH^2}{2}\right) \\
&\geq \frac{\eta}{12} \sum_{k=0}^{\bar{K}_1} \alpha_k \|d^k\| \call^2 \epsH^2\\
&\geq \frac{\eta}{12} \sum_{k=0}^{\bar{K}_1} \call^3 \epsH^3 \\
&= \frac{\eta}{12} \left( \bar{K}_1+1 \right) \call^3 \epsH^3,
\end{align*}
where the second and third inequalities follow by
\eqref{eq:wcc1steplength}.  By combining this inequality with
\eqref{eq:wcc1eq1}, we have
\begin{align*}
f(x^0) - f(x^{\bar{K}_1+1}) & +
\mu n \left(\log(\omega_1)-\min_i \log(x^0_i)\right) \\
&\geq  \left(\bar{K}_1+1\right)\frac{\eta}{12} \epsH^3 \call^3 \\
& > \mu n \left(\log(\omega_1)-\min_i \log(x^0_i)\right)  + f(x^0) - \flow,
\end{align*}
where we used the definition of $\bar{K}_1$ and $\epsH = \epsg^{1/2}$
for the final inequality. This inequality contradicts the definition
of $\flow$ \refer{(in Assumption \ref{assum:flow})}, so our claim is proved.}
\end{proof}

Recalling that the workload of Algorithm~\ref{alg:ccg} in terms of
Hessian-vector products depends on the index $J$ defined in
Lemma~\ref{lem:ccgits},
we obtain the following corollary. (Note the mild assumption on the
value of $M$ used at each instance of Algorithm~\ref{alg:ccg}, which
is satisfied provided that this algorithm is always invoked with an
initial estimate of $M$ in the range $[0,U_H+\mu]$.)
\begin{corollary} \label{coro:wcc1Hv}
Suppose that \refer{Assumptions \ref{assum:fC22}, \ref{assum:flow},
and \ref{assum:boundedgH}} hold, and let $\bar{K}_1$ be defined as in Theorem \ref{thm:wcc1}
and $J(M,\epsH,\zeta_r,c_\mu)$ be as defined \swmodify{in Lemma~\ref{lem:ccgits}.}
Suppose that the values of $M$ used or calculated at each instance of
Algorithm~\ref{alg:ccg} satisfy $M \le U_H+\mu$. Then the number of
Hessian-vector products and/or gradient evaluations required by
Algorithm~\ref{alg:lbncg} to output an iterate
satisfying~\eqref{eq:eps1ON} is at most
\begin{equation} \label{eq:ts2}
\left(2\min \left\{n,J(U_H+\mu,\epsH,\zeta_r,c_\mu) \right\}+2 \right)(\bar{K}_1+1). 
\end{equation}
\swmodify{
  If $J(U_H+\mu,\epsH,\zeta_r,c_\mu) < n$, this bound is
\begin{equation} \label{eq:ts3}
  \tcO (\epsg^{-7/4} + n\epsg^{-3/4} ),
\end{equation}
while if $J(U_H+\mu,\epsH,\zeta_r,c_\mu) \ge n$, it is
\begin{equation} \label{eq:ts4}
  \tcO (n \epsg^{-3/2} ).
\end{equation}
}
\end{corollary}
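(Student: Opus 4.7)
The plan is to combine the per-outer-iteration work bound from Capped CG with the outer iteration bound of Theorem~\ref{thm:wcc1}, and then simplify the product using the asymptotic estimates of its two factors. First, at any iterate $x^k$ at which \eqref{eq:eps1ON} fails, Algorithm~\ref{alg:lbncg} executes exactly one call to Algorithm~\ref{alg:ccg}; Procedure~\ref{alg:meo} is invoked only after the first-order test passes, so it plays no role in this corollary. By Lemma~\ref{lem:ccgits}, that call performs at most $2\min\{n,J(M,\epsH,\zeta_r,c_\mu)\}+1$ Hessian-vector products; together with the one gradient evaluation needed for the sufficient-decrease check in \eqref{eq:lsdecreasedamped}, this yields at most $2\min\{n,J\}+2$ units of work per outer iteration. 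The hypothesis $M\le U_H+\mu$ is self-consistent across all calls because \eqref{eq:hx9} and Assumption~\ref{assum:boundedgH} give $\|H\|\le U_H+\mu$, while Capped CG increases $M$ only when a concrete witness exceeds its current value of $M$, so $M$ can never exceed $\|H\|\le U_H+\mu$. Multiplying the per-iteration bound by the outer bound $\bar{K}_1+1$ from Theorem~\ref{thm:wcc1} gives \eqref{eq:ts2}.

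For the asymptotic rates, I would substitute the explicit forms of $J$ and $\bar{K}_1$. Using $\epsH=\epsg^{1/2}$, the estimates \eqref{eq:HvcountcappedCG1}--\eqref{eq:HvcountcappedCG} give $J(U_H+\mu,\epsH,\zeta_r,c_\mu)=\tcO(\epsH^{-1/2})=\tcO(\epsg^{-1/4})$. For $\bar{K}_1$, the value of $\omega_1$ is $\tcO(\sqrt{n})$ so $\log\omega_1=\tcO(\log n)$; combined with $\mu=\epsg/4$ this makes $\mu n(\log\omega_1-\min_i\log x_i^0)=\tcO(n\epsg)$, while the $f(x^0)-\flow$ term is $\mathcal{O}(1)$ under Assumption~\ref{assum:flow}. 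Dividing by the $\epsg^{3/2}/(\eta\call^3)$ prefactor of Theorem~\ref{thm:wcc1} yields $\bar{K}_1=\tcO(n\epsg^{-1/2}+\epsg^{-3/2})$.

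In the first regime $J<n$, the total work is $(2J+2)(\bar{K}_1+1)=\tcO(\epsg^{-1/4})\cdot\tcO(n\epsg^{-1/2}+\epsg^{-3/2})=\tcO(n\epsg^{-3/4}+\epsg^{-7/4})$, establishing \eqref{eq:ts3}. In the second regime $J\ge n$, the product becomes $(2n+2)(\bar{K}_1+1)=\tcO(n^2\epsg^{-1/2}+n\epsg^{-3/2})$. The main (and essentially the only nontrivial) obstacle is to simplify the first term here: I would exploit the hypothesis $n\le J$ combined with $J=\tcO(\epsg^{-1/4})$, which forces $n=\tcO(\epsg^{-1/4})$ and hence $n^2\epsg^{-1/2}=\tcO(n\epsg^{-3/4})$. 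Since $n\epsg^{-3/4}$ is dominated by $n\epsg^{-3/2}$ for $\epsg\in(0,1)$, we obtain the claimed bound \eqref{eq:ts4}. Apart from this small reduction exploiting the $J\ge n$ hypothesis, the argument is routine bookkeeping on top of Theorem~\ref{thm:wcc1} and Lemma~\ref{lem:ccgits}.
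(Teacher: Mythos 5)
Your proposal is correct and follows essentially the same route as the paper: the per-iteration work bound $2\min\{n,J\}+2$ from Lemma~\ref{lem:ccgits} multiplied by $\bar{K}_1+1$ from Theorem~\ref{thm:wcc1}, with the same asymptotic substitutions $J=\tcO(\epsg^{-1/4})$ and $\bar{K}_1=\tcO(n\epsg^{-1/2}+\epsg^{-3/2})$, and the same use of $n\le J=\tcO(\epsg^{-1/4})$ to collapse the $n^2\epsg^{-1/2}$ term in the second regime. The only quibble is cosmetic: the one gradient evaluation per iteration is needed to form $\bX_k\nabla\phi_\mu(x^k)$ and test the first-order conditions, not for the sufficient-decrease check \eqref{eq:lsdecreasedamped}, which uses only function values; this does not affect the count.
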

\begin{proof}
From Lemma~\ref{lem:ccgits}, the number of Hessian-vector
multiplications in the main loop of Algorithm~\ref{alg:ccg} is bounded
by $\min \left\{ n, J(U_H,\epsH,\zeta_r,c_\mu) \right\} + 1$.  An
additional $\min \left\{n, J(U_H,\epsH,\zeta_r,c_\mu) \right\}$
Hessian-vector products may be needed to return a direction
satisfying~\eqref{eq:weakcurvdir}, if Algorithm~\ref{alg:ccg} does not
store its iterates $y_j$. Each iteration also requires a single
evaluation of the gradient $\nabla f$, giving a bound of \\$(2 \min
\left\{n,J(U_H,\epsH,\zeta_r,c_\mu)\right\}+2)$ on the workload per
iteration of Algorithm~\ref{alg:lbncg}.  \swmodify{Per
  Theorem~\ref{thm:wcc1}, we obtain the result \eqref{eq:ts2} by
  multiplying this quantity by $\bar{K}_1+1$.}

\swmodify{ To obtain the estimate \eqref{eq:ts3}, we note from $\mu =
  \epsg/4$ that
  \[
  \bar{K}_1 = \tcO ( n \epsg^{-1/2} + \epsg^{-3/2} ),
  \]
  while from \eqref{eq:HvcountcappedCG1} and
  \eqref{eq:HvcountcappedCG}, using $\epsilon = \epsH = \epsg^{1/2}$,
  we have for $J(U_H+\mu,\epsH,\zeta_r,c_\mu) < n$ that
  \[
  J(U_H+\mu,\epsH,\zeta_r,c_\mu) = \tcO (\epsH^{-1/2} ) =
  \tcO (\epsg^{-1/4} ).
  \]
  We obtain \eqref{eq:ts3} by substituting these estimates into
  \eqref{eq:ts2}. For \eqref{eq:ts4}, we have from
  $J(U_H+\mu,\epsH,\zeta_r,c_\mu) \ge n$ together with
  \eqref{eq:HvcountcappedCG1} and \eqref{eq:HvcountcappedCG} that $n
  \le \tcO \left( \epsg^{-1/4} \right)$. Therefore,
  \swmodify{computational complexity} is bounded by
  \[
  \tcO ( n ( n \epsg^{-1/2} + \epsg^{-3/2} ))
  \le \tcO ( n (\epsg^{-3/4} + \epsg^{-3/2} ))
  = \tcO ( n \epsg^{-3/2} ),
  \]
  as claimed
}
\end{proof}

\subsection{Second-Order Complexity Analysis} \label{subsec:wcc2}

We now find bounds on iteration and computational complexity of
finding a point that satisfies all of the approximate optimality
conditions in \eqref{eq:epsKKT}.  In this section, as well as using
results from Sections~\ref{subsec:wccccg} and~\ref{subsec:wcc1}, we
need to use the properties of the minimum eigenvalue oracle,
Procedure~\ref{alg:meo}. To this end, we make the following generic
assumption.

\begin{assumption} \label{assum:wccmeo}
For every iteration $k$ at which Algorithm~\ref{alg:lbncg} calls
Procedure~\ref{alg:meo}, and for a specified failure probability
$\delta$ with $0 \le \delta \ll 1$, Procedure~\ref{alg:meo} either
certifies that $\bX_k \nabla^2 f(x_k) \bX_k \succeq -\epsH I$ or finds a vector
of curvature smaller than $-{\epsH}/{2}$ in at most
\begin{equation} \label{eq:wccmeo}
		N_{\mathrm{meo}}:=\min\left\{n,
		1+\left\lceil\Cmeo\epsH^{-1/2}\right\rceil\right\}
\end{equation}
Hessian-vector products, with probability $1-\delta$, where
$\Cmeo$ depends at most logarithmically on $\delta$ and $\epsH$.
\end{assumption}

Assumption~\ref{assum:wccmeo} encompasses the strategies we mentioned in
Section~\ref{subsec:meo}. Assuming the bound $U_H$ on
$\|H\|$ is available, for both the Lanczos method with a random
starting vector and the conjugate gradient algorithm with a random
right-hand side, \eqref{eq:wccmeo} holds with
$\Cmeo=\ln(2.75n/\delta^2)\sqrt{U_H}/2$. When a bound on
$\|H\|$ is not available in advance, it can be estimated efficiently
with minimal effect on the complexity bounds; see
Appendix B.3 of~\cite{CWRoyer_MONeill_SJWright_2019}.

The next lemma guarantees termination of the backtracking line search
for a negative curvature direction. As for Lemma~\ref{lem:LSccgSOL},
the result is deterministic.

\begin{lemma} \label{lem:LSgeneralNC}
Suppose that \refer{Assumptions~\ref{assum:fC22}
and~\ref{assum:wccmeo}} hold.
Suppose that at iteration $k$ of Algorithm~\ref{alg:lbncg}, the search
direction $d^k$ is of negative curvature type, obtained either directly
from Procedure~\ref{alg:meo} or as the output of Algorithm~\ref{alg:ccg}
with d\_type=NC. Then the backtracking line search terminates with step
length $\alpha_k = \theta^{j_k}$ with $j_k \le \jnc +1$, where $\jnc$ is defined
as in Lemma~\ref{lem:LSccgNC}, and the decrease in the function value
resulting from the chosen step length satisfies
\refer{
\begin{equation} \label{eq:LSgeneralNCdecrease}
\alpha_k \|d^k\| \geq \frac14 \cnc \epsH,
\end{equation}
}with $\cnc$ is defined in Lemma~\ref{lem:LSccgNC}.
\end{lemma}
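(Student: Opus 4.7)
The proof splits into two cases according to how $d^k$ is generated. If $d^k$ is produced by Algorithm~\ref{alg:ccg} with d\_type=NC, then Lemma~\ref{lem:LSccgNC} applies verbatim and yields $j_k \le \jnc + 1$ together with $\alpha_k \|d^k\| \ge \cnc \epsH \ge \tfrac{1}{4} \cnc \epsH$, so the conclusion holds trivially. The substance of the proof is therefore the case in which $d^k$ is built from the unit vector $v$ returned by Procedure~\ref{alg:meo}, which under Assumption~\ref{assum:wccmeo} satisfies $v^\top \bX_k \nabla^2 f(x^k) \bX_k v \le -\epsH/2$.

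For this case the plan is to recover the two structural properties of $d^k$ on which the proof of Lemma~\ref{lem:LSccgNC} depends, namely $(d^k)^\top \bX_k \nabla \phi_\mu(x^k) \le 0$ and $(d^k)^\top \bX_k \nabla^2 \phi_\mu(x^k) \bX_k d^k / \|d^k\|^2 \le -\|d^k\|$, together with a suitable lower bound on $\|d^k\|$. Descent is immediate from the sign adjustment built into the construction of $d^k$ in Algorithm~\ref{alg:lbncg}. To bridge from negative curvature of $\nabla^2 f$ to negative curvature of $\nabla^2 \phi_\mu$, I compute
\[
v^\top \bX_k \nabla^2 \phi_\mu(x^k) \bX_k v \;=\; v^\top \bX_k \nabla^2 f(x^k) \bX_k v + \mu\, v^\top \bX_k X_k^{-2} \bX_k v \;\le\; -\tfrac{1}{2}\epsH + \mu,
\]
using $\|\bX_k X_k^{-1}\| \le 1$ and $\|v\|=1$, and then invoke $\mu = \epsg/4 = \epsH^2/4$ with $\epsH \in (0,1)$ to conclude that this quantity is at most $-\epsH/4$.

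Turning to $\|d^k\|$, note that $\|d^k\|$ equals the minimum in the definition of $d^k$ since $\|v\|=1$. The first argument of the minimum is $|v^\top \bX_k \nabla^2 \phi_\mu(x^k) \bX_k v| \ge \epsH/4$ by the curvature bound just derived, while the second argument satisfies $\beta/\|X_k^{-1} \bX_k v\|_\infty \ge \beta \ge \epsH$ because $\|X_k^{-1} \bX_k v\|_\infty \le 1$. Hence $\|d^k\| \ge \epsH/4$. Moreover, since $\|d^k\| \le |v^\top \bX_k \nabla^2 \phi_\mu(x^k) \bX_k v|$ and the ratio $(d^k)^\top \bX_k \nabla^2 \phi_\mu(x^k) \bX_k d^k / \|d^k\|^2$ equals $v^\top \bX_k \nabla^2 \phi_\mu(x^k) \bX_k v$ (which is negative), the curvature inequality $(d^k)^\top \bX_k \nabla^2 \phi_\mu(x^k) \bX_k d^k / \|d^k\|^2 \le -\|d^k\|$ follows at once. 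With these ingredients in place, the backtracking analysis used in the proof of Lemma~\ref{lem:LSccgNC}, driven by the Taylor expansion \eqref{eq:phitaylorexpansion}, transfers unchanged and yields $j_k \le \jnc + 1$ together with $\alpha_k \ge \cnc$. Combining $\alpha_k \ge \cnc$ with $\|d^k\| \ge \epsH/4$ produces $\alpha_k \|d^k\| \ge \tfrac{1}{4} \cnc \epsH$, as required.

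The main obstacle, and the reason the conclusion loses the factor $\tfrac{1}{4}$ relative to Lemma~\ref{lem:LSccgNC}, is precisely the passage from curvature of $\nabla^2 f$ to curvature of $\nabla^2 \phi_\mu$: the extra term $\mu \bX_k X_k^{-2} \bX_k$ contributed by the log-barrier shifts the quadratic form upward by as much as $\mu$, and only the choice $\mu = \epsg/4 \le \epsH/4$ (valid because $\epsH \le 1$) keeps this perturbation strictly smaller than the certified curvature $-\epsH/2$ of $v$.
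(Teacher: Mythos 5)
Your proof is correct and follows essentially the same route as the paper's: split off the d\_type=NC case via Lemma~\ref{lem:LSccgNC}, transfer the curvature certificate from $\nabla^2 f$ to $\nabla^2\phi_\mu$ using $\mu=\epsH^2/4\le\epsH/4$, deduce $\|d^k\|\ge\epsH/4$ from the two arguments of the min together with $\beta\ge\epsH$, and rerun the backtracking argument of Lemma~\ref{lem:LSccgNC} with $\epsH/4$ in place of $\epsH$. No gaps.
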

\begin{proof}
Lemma~\ref{lem:LSccgNC} shows that the claim holds \swmodify{(with a
  factor of $1/4$ to spare)} when the direction of negative curvature
is obtained from Algorithm~\ref{alg:ccg}.  When the direction $v$ is
obtained from Procedure~\ref{alg:meo}, we have by $\|v\| = 1$ that
\[
v^\top \bX_k \nabla^2 f(x^k) \bX_k v \leq -\frac12 \epsH.
\]
Then, since $v^\top \bX_k X^{-2}_k \bX_k v \leq 1$, we have
\begin{align}
  \nonumber
v^\top \bX_k \nabla^2 \phi_\mu(x^k) \bX_k v & =
v^\top \bX_k \nabla^2 f(x^k) \bX_k v + \mu v^\top \bX_k X^{-2}_k \bX_k v \\
\label{eq:LSgeneralNCeq1}
& \leq 
-\frac12 \epsH + \mu \le -\frac14 \epsH,
\end{align}
where the last inequality follows from $\mu = \epsg/4 = \epsH^2/4$ and
$\epsH < 1$.
Now, when
\[
\min\left\{|v^\top \bX_k \nabla^2 \phi_\mu(x^k) \bX_k v|,
\frac{\beta}{\|X_k^{-1} \bX_k v\|_\infty}\right\} =
|v^\top \bX_k \nabla^2 \phi_\mu(x^k) \bX_k v|,
\]
we have $\|d^k\| = |v^\top \bX_k \nabla^2 \phi_\mu(x^k) \bX_k
v| \ge \epsH/4$. Otherwise, we have
\[
\beta = \|X_k^{-1} \bX_k d^k\|_\infty \leq \|X_k^{-1} \bX_k d^k\| \leq
\|X_k^{-1} \bX_k\| \|d^k\| \leq \|d^k\|.
\]
By combining the two cases, and using $\beta \ge \epsH$, we have
\[
\|d^k\| \geq  \min\left\{\frac14 \epsH, \beta\right\} = \frac14 \epsH.
\]
Finally, we note that in either case, we have
\[
\|d^k\| \leq -v^\top \bX_k \nabla^2 \phi_\mu(x^k) \bX_k v =
-\frac{(d^k)^\top \bX_k \nabla^2 \phi_\mu(x^k) \bX_k d^k}{\|d^k\|^2}.
\]
Therefore, we have
\[
\frac{(d^k)^\top \bX_k \nabla^2 \phi_\mu(x^k) \bX_k d^k}{\|d^k\|^2} \leq - \|d^k\| \leq -\frac14 \epsH.
\]
The result can now be obtained by following the proof of
Lemma~\ref{lem:LSccgNC}, with $\frac14 \epsH$ replacing $\epsH$.
\end{proof}

We are now ready to state our iteration complexity result for
Algorithm~\ref{alg:lbncg}.
\begin{theorem} \label{thm:wccits}
Suppose that \refer{Assumptions~\ref{assum:fC22},~\ref{assum:flow},
and~\ref{assum:wccmeo}} hold and define
\refer{
\begin{equation} \label{eq:omega2def}
\omega_2 := \max\left\{\frac{96 \sqrt{n}}{\eta \call^2}, \, \| x^0\|_{\infty} \right\},
\end{equation}
and
\begin{align} \label{eq:wccits}
\bar{K}_2 & :=\left\lceil \frac{1536\left(f(x^0) - \flow
+\mu n \left(\log(\omega_2)-\min_i \log(x^0_i)\right)\right)}
    {\eta \call^3} \epsg^{-3/2} \right\rceil + 2 \\
    \nonumber
    & = \tcO(n \epsg^{-1/2} + \epsg^{-3/2}),
\end{align}
where the constant $\call$ is defined in Theorem~\ref{thm:wcc1}.
}Then with probability at least
$(1-\delta)^{\bar{K}_2}$, Algorithm~\ref{alg:lbncg} terminates
at a point satisfying \eqref{eq:epsKKT} in at most $\bar{K}_2$
iterations. (With probability at most
$1-(1-\delta)^{\bar{K}_2}$, it terminates incorrectly within
$\bar{K}_2$ iterations at a point for which (\ref{eq:xstrictfeas}), (\ref{eq:epsgradpos}),
and (\ref{eq:epscompliment}) hold but (\ref{eq:epspsd}) does not.)
\end{theorem}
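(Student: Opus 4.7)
The plan is to combine a union-bound argument over the $\bar{K}_2$ possible calls to Procedure~\ref{alg:meo} with a deterministic descent analysis patterned on the proof of Theorem~\ref{thm:wcc1}. Since Procedure~\ref{alg:meo} is invoked at most once per iteration and fails with probability at most $\delta$ per call, with probability at least $(1-\delta)^{\bar{K}_2}$ every such call over the first $\bar{K}_2$ iterations either correctly certifies $\bX_k \nabla^2 f(x^k)\bX_k \succeq -\epsH I$ or returns a unit vector with curvature at most $-\epsH/2$. Call this the \emph{good event}. On its complement (probability at most $1-(1-\delta)^{\bar{K}_2}$), the only way termination can occur is via an erroneous certificate from Procedure~\ref{alg:meo}, producing exactly the incorrect termination described in the theorem statement.

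It remains to prove deterministically that, on the good event, Algorithm~\ref{alg:lbncg} terminates at a point satisfying \eqref{eq:epsKKT} within $\bar{K}_2$ iterations. I would argue by contradiction. Each non-terminating iteration $k$ falls into one of four scenarios: (a) Algorithm~\ref{alg:ccg} returns d\_type=SOL under case (B) of Lemma~\ref{lem:LSccgSOL}, yielding $\alpha_k\|d^k\|\geq \csol\epsH$; (b) Algorithm~\ref{alg:ccg} returns d\_type=NC and Lemma~\ref{lem:LSccgNC} yields $\alpha_k\|d^k\|\geq \cnc\epsH$; (c) Algorithm~\ref{alg:ccg} returns d\_type=SOL under case (A) of Lemma~\ref{lem:LSccgSOL}, so only $\alpha_k=1$ is guaranteed; or (d) Procedure~\ref{alg:meo} is invoked and, by the good event, returns a negative-curvature direction, to which Lemma~\ref{lem:LSgeneralNC} applies and gives $\alpha_k\|d^k\|\geq \tfrac14\cnc\epsH$. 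Cases (a), (b), (d) therefore all guarantee $\alpha_k\|d^k\|\geq \tfrac14\call\epsH$, where $\call=\min\{\csol,\cnc\}$, the factor $1/4$ coming from Lemma~\ref{lem:LSgeneralNC}.

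The main obstacle is case (c), in which no useful lower bound on $\|d^k\|$ is available and the iterate could in principle produce negligible decrease in $\phi_\mu$. The key observation is that case (c) forces the approximate first-order conditions \eqref{eq:epsgradpos} and \eqref{eq:epscompliment} to hold at $x^{k+1}$, so iteration $k+1$ branches into scenario (d). Conditioned on the good event, iteration $k+1$ therefore either terminates the algorithm (contradicting non-termination) or delivers the $\tfrac14\call\epsH$ progress bound. Thus case (c) cannot repeat at two consecutive non-terminating iterations, and at least half of the first $\bar{K}_2+1$ iterations satisfy $\alpha_k\|d^k\|\geq \tfrac14\call\epsH$. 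This halving explains the ``$+2$'' term and the inflated leading constant of $\bar{K}_2$ relative to $\bar{K}_1$ (essentially a factor $2\cdot 4^3=128$).

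The remainder mirrors the proof of Theorem~\ref{thm:wcc1}: summing \eqref{eq:lsdecreasedamped}, splitting $\phi_\mu$ via its definition, telescoping $f$, and applying Lemma~\ref{lem:logdecrease} with $\omega=\omega_2$ yields
\[
f(x^0) - \flow + \mu n\bigl(\log \omega_2 - \min_i \log x_i^0\bigr) \; \geq \; \tfrac{\eta}{6}\sum_{k=0}^{\bar{K}_2} \alpha_k^3\|d^k\|^3 \; - \; \mu\,\tfrac{\sqrt{n}}{\omega_2}\sum_{k=0}^{\bar{K}_2}\alpha_k\|d^k\|.
\]
The choice $\omega_2\geq 96\sqrt{n}/(\eta\call^2)$, combined with $\mu=\epsH^2/4$ and the $\tfrac14\call\epsH$ lower bound on at least half of the iterations, absorbs the log-term into a fraction of the cubic sum, leaving a lower bound proportional to $(\bar{K}_2/2)\cdot(\tfrac14\call)^3\epsH^3$. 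Substituting the definition of $\bar{K}_2$ contradicts Assumption~\ref{assum:flow}, and the asymptotic bound $\bar{K}_2=\tcO(n\epsg^{-1/2}+\epsg^{-3/2})$ follows directly from $\mu=\epsg/4$ and $\epsH=\epsg^{1/2}$.
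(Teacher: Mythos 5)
Your proposal is correct and follows essentially the same route as the paper's proof: the same disjunction/union-bound argument for the probabilistic part, the same three-way classification of iterations (the paper's sets $\cK_1,\cK_2,\cK_3$ correspond exactly to your scenarios (d), (a)--(b), and (c)), the same observation that a short step forces case A of Lemma~\ref{lem:LSccgSOL} and hence sends the next iteration to the minimum-eigenvalue branch (giving $|\cK_3|\le|\cK_1|+1$, i.e.\ your ``at least half the iterations make progress''), and the same telescoping descent argument with Lemma~\ref{lem:logdecrease} at $\omega=\omega_2$. Your constant accounting (the factor $2\cdot 4^3=128$ inflating $12$ to $1536$) also matches the paper's bookkeeping.
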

\begin{proof}
Algorithm~\ref{alg:lbncg} terminates incorrectly with probability
$\delta$ at any iteration at which Procedure~\ref{alg:meo} is called,
when Procedure~\ref{alg:meo} certifies erroneously that \\
$\lambda_{\min}(\bX_k \nabla^2 f(x^k) \bX_k) \geq -\epsH$.
Such an erroneous certificate only leads to termination.
Therefore, an erroneous certificate at iteration $k$ means that
Procedure~\ref{alg:meo} did not produce an erroneous certificate at
iterations $0$ to $k-1$. By a disjunction argument, we have that the
overall probability of terminating with an erroneous certificate
during the first $\bar{K}_2$ iterations is bounded by
$1-(1-\delta)^{\bar{K}_2}$. Therefore, with probability at least
$(1-\delta)^{\bar{K}_2} $, no incorrect termination occurs in the
first $\bar{K}_2$ iterations.

Suppose now for contradiction that Algorithm~\ref{alg:lbncg} runs for
$\bar{K}_2$ iterations without terminating. That is, for all 
$l=0,1,\dotsc,\bar{K}_2$, we have at least one of:
$[\nabla f(x^l)]_i <  -\epsg$ for some coordinate $i$,
$\| \bX_l \nabla f(x^l) \|_\infty > \epsg$,
or $\lambdamin(\bX_l \nabla^2 f (x^l) \bX_l) < -\epsH$.
Consider the following  partition of the set of iteration indices:
\begin{equation} \label{eq:K123}
  \cK_1 \cup \cK_2 \cup \cK_3 = \{0,1,\dotsc,\bar{K}_2-1\},
  \end{equation}
where $\cK_1$, $\cK_2$, and $\cK_3$ are defined as follows.

\textbf{Case 1:} $\cK_1 := \{ l =0,1,\dotsc,\bar{K}_2-1 \, : \,
  \nabla f(x^l) \geq -\epsg e$ and
  $\| \bX_l \nabla f(x^l) \|_\infty \leq \epsg \}$.
  
  

\refer{\textbf{Case 2:} $\cK_2 := \{ l=0,1,\dotsc,\bar{K}_2-1 \, : \, [\nabla
  f(x^l)]_i < -\epsg$ for some coordinate $i$ and/or $\| \bX_l \nabla
  f(x^l) \|_\infty > \epsg$ and $\alpha_l \|d^l\| \geq  (\call/4) \epsH \}$.}
  
\refer{\textbf{Case 3:} $\cK_3 := \{ l=0,1,\dotsc,\bar{K}_2-1 \, : \,
  [\nabla f(x^l)]_i <  -\epsg$ for some coordinate $i$ and/or
  $\| \bX_l \nabla f(x^l) \|_\infty > \epsg$ and $\alpha_l \|d^l\| < (\call/4) \epsH \}$.}
  
  \refer{Then, for all $l \in \cK_1 \cup \cK_2$, the fact that the algorithm
  does not satisfy \eqref{eq:epsKKT} at iteration $l+1$ together with
  Lemmas~\ref{lem:LSccgSOL}, \ref{lem:LSccgNC}, and
  \ref{lem:LSgeneralNC} guarantee that
\begin{equation} \label{eq:wcc2steplength}
\alpha_l \|d^l\| \geq  \min\{\csol,\cnc/4\} \epsH  \ge (\call/4) \epsH.
\end{equation}
On the other hand, for $l \in \cK_3$, case A of Lemma
\ref{lem:LSccgSOL} must have occured. Therefore, for any $l \in
\cK_3$, we must have $\nabla f(x^{l+1}) \geq -\epsg e$ and $\|
\bX_{l+1} \nabla f(x^{l+1}) \|_\infty \leq \epsg$, so that $l+1 \in
\cK_1$ for $l<\bar{K}_2-1$. Thus, a sufficiently long step will be
taken at the {\em next} iteration, and we have
\begin{equation} \label{eq:K13}
  | \cK_3| \leq |\cK_1|+1 \leq |\cK_1| + |\cK_2| + 1.
\end{equation}}

\refer{Now, by a similar argument to Theorem \ref{thm:wcc1}
that led to (\ref{eq:wcc1eq1}), we have
\begin{equation} \label{eq:wcc2eq1}
-\frac{\eta}{6} \sum_{j=0}^{\bar{K}_2-1} \alpha_j^3 \|d^j\|^3 \geq
f(x^{\bar{K}_2}) - f(x^0) -\mu\frac{\sqrt{n}}{\omega_2}
\sum_{l=0}^{\bar{K}_2-1} \alpha_l \|d^l\| -\mu
n\left(\log(\omega_2)-\min_i \log(x_i^0)\right).
\end{equation}}

\refer{Using the definition of $\omega_2$, we have
\[
-\mu \frac{\sqrt{n}}{\omega_2} \sum_{l=0}^{\bar{K}_2-1} \alpha_l
\|d^l\| \geq -\frac{\mu \eta \call^2}{96} \sum_{l=0}^{\bar{K}_2-1}
\alpha_l \|d^l\| = -\frac{\eta \call^2 \epsH^2}{384}
\sum_{l=0}^{\bar{K}_2-1} \alpha_l \|d^l\|,
\]
where the second equality is due to $\mu = \epsg/4 = \epsH^2/4$.
Therefore, we have
\begin{align*}
& \frac{\eta}{6} \sum_{l=0}^{\bar{K}_2-1} \alpha_l^3 \|d^l\|^3
 - \mu \frac{\sqrt{n}}{\omega_2} \sum_{l=0}^{\bar{K}_2-1} \alpha_l \|d^l\| \\
& \geq\frac{\eta}{6} \sum_{l=0}^{\bar{K}_2-1}\left(\alpha_l^3 \|d^l\|^3 
-\frac{\call^2 \epsH^2}{64}\alpha_l \|d^l\|\right) \\
&= \frac{\eta}{6} \sum_{j \in \cK_1 \cup \cK_2} \alpha_j \|d^j\| \left(\alpha_j^2 \|d^j\|^2
- \frac{\call^2 \epsH^2}{64}\right)
+ \frac{\eta}{6} \sum_{l \in \cK_3} \left(\alpha_l^3 \|d^l\|^3
- \frac{\call^2 \epsH^2}{64}\alpha_l \|d^l\|\right) \\
&\geq \frac{3\eta}{384} \sum_{j \in \cK_1 \cup \cK_2} \alpha_j \|d^j\| \call^2 \epsH^2
- \frac{\eta}{1536} \sum_{l \in \cK_3} \call^3 \epsH^3 \\
&\geq (|\cK_1| +|\cK_2|) \frac{3\eta}{1536} \call^3 \epsH^3
- \left(|\cK_1| + |\cK_2| + 1\right) \frac{\eta}{1536} \call^3 \epsH^3 \\
&\geq \left(|\cK_1| +|\cK_2| -\frac12\right) \frac{\eta}{768} \call^3 \epsH^3,
\end{align*}
where the second inequality follows by (\ref{eq:wcc2steplength}) and
the definition of $\cK_3$, while the third inequalities follows by
(\ref{eq:wcc2steplength}) and (\ref{eq:K13}).}

\refer{Thus, this inequality, (\ref{eq:wcc2eq1}) and
$|\cK_1| + |\cK_2| + |\cK_3| - 2 \leq 2(|\cK_1| + |\cK_2| - 1/2)$, imply
\begin{align*}
& f(x^0) - f(x^{\bar{K}_2}) +
\mu n \left(\log(\omega_2)-\min_i \log(x^0_i)\right) \\
& \geq \left(|\cK_1| +|\cK_2| - 1/2\right) \frac{\eta}{768} \call^3 \epsH^3 \\
&\geq \left(|\cK_1| +|\cK_2| + |\cK_3|-2\right) \frac{\eta}{1536} \call^3 \epsH^3 \\
&\geq \left(\bar{K}_2-1\right) \frac{\eta}{1536} \call^3 \epsH^3 \\
&>  f(x^0) - \flow
 + \mu n \left(\log(\omega_2)-\min_i \log(x^0_i)\right).
\end{align*}
where the final inequality follows from the definition of $\bar{K}_2$
and $\epsH = \epsg^{1/2}$. The final inequality implies that $\flow >
f(x^{\bar{K}_2})$, which contradicts the definition of $\flow$,
proving the claim.}

\refer{The estimate $\bar{K}_2 = \tcO(n \epsg^{-1/2} + \epsg^{-3/2})$ follows
directly from $\mu = \epsg/4$.}
\end{proof}

Finally, we provide a computational complexity result, a bound on the
number of Hessian-vector products and gradient evaluations necessary
for Algorithm~\ref{alg:lbncg} to find a point that satisfies
(\ref{eq:epsKKT}).
\begin{corollary} \label{coro:wcc2Hv}
Suppose that \refer{Assumptions~\ref{assum:fC22},~\ref{assum:flow},
\ref{assum:boundedgH}, and~\ref{assum:wccmeo}} hold, and let
$\bar{K}_2$ be defined as in \eqref{eq:wccits}.  Suppose that the
values of $M$ used or calculated at each instance of
Algorithm~\ref{alg:ccg} satisfy $M \le U_H+\mu$. Then with probability at
least $(1-\delta)^{\bar{K}_2}$, Algorithm~\ref{alg:lbncg} terminates
at a point satisfying \eqref{eq:epsKKT} after at most
\begin{equation} \label{eq:qj8}
\left(\max \{ 2\min\{n,J(U_H+\mu,\epsH,\zeta_r,c_\mu)\}+2,N_{\mathrm{meo}} \} \right) 
\bar{K}_2
\end{equation}
Hessian-vector products and/or gradient evaluations. (With probability
at most $1-(1-\delta)^{\bar{K}_2}$, it terminates incorrectly with
this complexity at a point \refer{for which} (\ref{eq:xstrictfeas}),
(\ref{eq:epsgradpos}), and (\ref{eq:epscompliment}) hold but
(\ref{eq:epspsd}) does not.)
  
\end{corollary}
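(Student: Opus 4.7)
The plan is to stitch together three ingredients already available: (i) the iteration bound $\bar{K}_2$ from Theorem~\ref{thm:wccits}, (ii) the per-iteration cost of Algorithm~\ref{alg:ccg} derived for Corollary~\ref{coro:wcc1Hv}, and (iii) the per-iteration cost of Procedure~\ref{alg:meo} guaranteed by Assumption~\ref{assum:wccmeo}. Each outer iteration of Algorithm~\ref{alg:lbncg} executes exactly one of two branches depending on whether the approximate first-order conditions \eqref{eq:epsgradpos}, \eqref{eq:epscompliment} hold at $x^k$, so the per-iteration Hessian-vector/gradient cost is bounded by the larger of the two branch costs.

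For the Capped CG branch, Lemma~\ref{lem:ccgits} bounds the main-loop matrix-vector products by $\min\{n,J(M,\epsH,\zeta_r,c_\mu)\}+1$, with at most another $\min\{n,J(M,\epsH,\zeta_r,c_\mu)\}$ required to regenerate the iterates $y_j$ if \eqref{eq:weakcurvdir} is invoked, plus one gradient evaluation per outer iteration. Under the standing assumption $M \le U_H+\mu$ this gives the bound $2\min\{n,J(U_H+\mu,\epsH,\zeta_r,c_\mu)\}+2$ per outer iteration, exactly as in Corollary~\ref{coro:wcc1Hv}. For the minimum-eigenvalue branch, Assumption~\ref{assum:wccmeo} directly supplies the bound $N_{\mathrm{meo}}$ on the number of Hessian-vector products (with probability $1-\delta$ of correctness at that single call).

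Taking the maximum of these two per-iteration bounds and multiplying by the iteration bound $\bar{K}_2$ from Theorem~\ref{thm:wccits} yields the total workload claimed in \eqref{eq:qj8}. For the probability assertion, I would recycle the disjunction argument from the proof of Theorem~\ref{thm:wccits}: Procedure~\ref{alg:meo} is called at most once per iteration, so across at most $\bar{K}_2$ iterations the probability that every call correctly identifies an approximate second-order point (when it certifies) is at least $(1-\delta)^{\bar{K}_2}$. With that probability Algorithm~\ref{alg:lbncg} terminates at a point satisfying all of \eqref{eq:epsKKT} within the stated computational budget; with the complementary probability it may terminate incorrectly at a point that satisfies \eqref{eq:xstrictfeas}, \eqref{eq:epsgradpos}, and \eqref{eq:epscompliment} but fails \eqref{eq:epspsd}, still within the same complexity envelope.

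There is no real obstacle here; the corollary is a bookkeeping consequence of results already proved. The only mild subtlety is being careful that exactly one of the two branch costs is incurred per outer iteration (justifying the $\max$ rather than a sum), and that the probability accounting uses independence/disjunction across the at most $\bar{K}_2$ invocations of Procedure~\ref{alg:meo}, exactly as in Theorem~\ref{thm:wccits}.
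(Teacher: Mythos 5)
Your proposal is correct and follows essentially the same route as the paper: the paper's proof is a one-line combination of Theorem~\ref{thm:wccits} (iteration count), Lemma~\ref{lem:ccgits} (Capped CG workload), and Assumption~\ref{assum:wccmeo} (minimum-eigenvalue-oracle workload), which is precisely the bookkeeping you carry out, including taking the maximum of the two per-iteration branch costs and inheriting the probability accounting from Theorem~\ref{thm:wccits}.
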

\begin{proof}
The proof follows by combining Theorem~\ref{thm:wccits} (which bounds
the number of iterations) with Lemma~\ref{lem:ccgits} and
Assumption~\ref{assum:wccmeo} (which bound the workload per
iteration).
\end{proof}

\sjwresolved{Moved these ``loose'' statements outside of the formal statement
  of the corollary. Are they correct?} \monresolved{These appear to be correct.}
  \swmodify{For large $n$, the
  operation bound \eqref{eq:qj8} is $\tcO( \epsg^{-7/4} + n
  \epsg^{-3/4} )$, because the multiplier of $\bar{K}_2$ in
  \eqref{eq:qj8} is $\tcO(\epsg^{-1/4})$ while $\bar{K}_2$ is $\tcO(n
  \epsg^{-1/2} + \epsg^{-3/2})$. For small $n$, the multiplier of
  $\bar{K}_2$ in \eqref{eq:qj8} is $\cO(n)$, and the dominant term in
  $\bar{K}_2$ is $\epsg^{-3/2}$, leading to a computational complexity
  bound of $\tcO(n \epsg^{-3/2})$ for this case.}

\swmodify{These computational complexity bounds are the same as those
  obtained for {\em unconstrained} smooth minimization discussed in
  Section~\ref{sec:relatedworks}, except for the inclusion of the $n
  \epsg^{-3/4}$ term for the case of large $n$. In the latter case,
  our algorithm acheives a superior worst-case computational
  complexity bound to that of \cite{GHaeser_HLiu_YYe_2018}, whose
  worst-case computational complexity appear to be $\cO(n
  \epsg^{-3/2})$. The $n \epsg^{-3/4}$ term is a consequence
  of using the log-barrier term to
  monitor descent. It may be avoided by making an additional
  assumption that $f$ grows rapidly enough to overcome the improvement
  in the logarithmic term of $\phi_\mu$, as $x$ moves away from the
  solution set for \eqref{eq:fdef} and becomes large. Indeed, we made
  such an assumption in an earlier version of the paper. It makes the
  analysis somewhat more straightforward in that it allows us assume
  that the iterates $\{ x^k \}$ are bounded. However, prompted by a
  referee's comment and a desire for generality, we have dropped this
  assumption in the current version.}


\section{Discussion} \label{sec:discussion}
We have presented a log-barrier Newton-CG algorithm which combines
recent advances in complexity of algorithms for large-scale
unconstrained optimization with results on the primal log-barrier
function for bound constraints. Our algorithm uses the Capped CG
method of \cite{CWRoyer_MONeill_SJWright_2019} to compute Newton-type
steps for the log-barrier function, while monitoring convexity during
the CG iterations to detect possible directions of negative curvature.
Once the algorithm has found a point satisfying the first-order
optimality conditions, a Minimum Eigenvalue Oracle is used to find a
direction of negative curvature for the scaled Hessian matrix or to
certify (with high probability) that the second-order optimality
conditions hold at the current iterate.  Both types of steps can be
computed using efficient iterative solvers, enabling good overall
computational complexity results. The resulting method finds a point
satisfying (\ref{eq:epsKKT}) in at most
\refer{$\cO(\epsg^{-3/2}+n\epsg^{-1/2})$} iterations, with at most
\refer{$\tcO(n \epsg^{-3/2})$ gradient evaluations and/or Hessian
  vector products when $n$ is small and at most $\tcO(\epsg^{-7/4} + n
  \epsg^{-3/4})$ gradient evaluations and/or Hessian vector products
  for $n$ sufficiently large. This overall computational complexity
  compares favorably with the worst-case bounds of recently proposed
  methods.}


There are a number of ways to align our algorithm more closely with
the interior-point methods in common use. One possible extension is to
embed this method in a primal-dual interior-point framework, which is
more \swmodify{widely used} than the primal log-barrier framework. A
second is to extend the log-barrier approach to minimize $\phi_\mu$
for a decreasing positive sequence of values of $\mu$, rather than the
``one-shot'' approach using a small fixed value of $\mu$ that we
describe in this paper. Finally, generalizations of our approach to
problems with more complex constraint sets, such as problems with
general linear constraints, remains an open problem.

\section*{Acknowledgement}
We thank Cl\'ement Royer for his valuable advice and many suggestions
during the preparation of this manuscript. We are also grateful for
the very helpful comments of the associate editor and two referees on
an earlier version.

\bibliographystyle{siam} 
\bibliography{refs-newtonlanczoscg}

\appendix
\section{Proofs of Technical Results} \label{app:proof}

\subsection{Proof of Lemma~\ref{lem:logbound}.} \label{app:proof.logbound}

\begin{proof}
For scalar $y>-1$, define $g(y) = -\log(1+y)$. We have $g'(y) =
-1/(1+y)$, $g''(y) = 1/(1+y)^2$ and $g^{(3)}(y) = -2/(1+y)^3$.  By
Taylor's theorem, we have
\begin{equation} \label{eq:logboundeq1}
g(y) = g(0) + y g'(0) + \frac12 y^2 g''(0) + \frac12 \int_0^y (y-t)^2 g^{(3)}(t) dt.
\end{equation}
Substituting $t = yu$ and using $|y| \leq \beta < 1$, we have
\[
\frac12 \int_0^y (y-t)^2 g^{(3)}(t) dt = -y \int_0^1 (y-yu)^2 \frac{du}{(1+yu)^3}
\leq |y|^3 \int_0^1 (1 - u)^2 \frac{du}{(1-\beta u)^3}.
\]
Now, since $(1 - u)^2$ is monotonically decreasing in $u$ and
$1/(1-\beta u)^3$ is monotonically increasing in $u$, we can apply
Chebyshev's integral inequality:
\begin{align*}
|y|^3 \int_0^1 (1 - u)^2 \frac{du}{(1-\beta u)^3}
&\leq |y|^3 \left[\int_0^1 (1 - u)^2  du \right] \left[ \int_0^1 \frac{du}{(1-\beta u)^3} \right] \\
&=  \frac{|y|^3}{3} \left[ \int_0^1 \frac{du}{(1-\beta u)^3} \right] \\
&= \frac{|y|^3}{6} \frac{1}{\beta} \left(\frac{1}{(1-\beta)^2} - 1\right) 
= \frac{|y|^3}{6} \frac{2 - \beta}{(1-\beta)^2}.
\end{align*}
By combining with (\ref{eq:logboundeq1}), we obtain
\[
-\log(1+y) \leq -y + \frac12 y^2 + \frac{|y|^3}{6} \frac{2 - \beta}{(1-\beta)^2}.
\]
Now, for some coordinate $i$, let $y = \left(\bar{x}_i/x_i\right) d_i$.
Clearly, we have $|y| \leq \beta$ so
\begin{align}
  \nonumber
-\log\left(1 + \frac{\bar{x}_i}{x_i} d_i\right)
&\leq -\frac{\bar{x}_i}{x_i} d_i + \frac12 \left(\frac{\bar{x}_i}{x_i} d_i \right)^2
+ \frac{| \frac{\bar{x}_i}{x_i} d_i|^3}{6} \frac{2 - \beta}{(1-\beta)^2} \\
\label{eq:su3}
&\leq -\frac{\bar{x}_i}{x_i} d_i + \frac12 \left(\frac{\bar{x}_i}{x_i} d_i \right)^2
+ \frac{|d_i|^3}{6} \frac{2 - \beta}{(1-\beta)^2}
\end{align}
holds. By the properties of logarithms, we have
\[
-\log \left(x_i \left(1 + \frac{\bar{x}_i}{x_i} d_i \right) \right) =-\log (x_i + \bar{x}_i d_i) =
-\log(x_i) - \log \left( 1 + \frac{\bar{x}_i}{x_i} d_i  \right).
\]
By rearranging this inequality and substituting from \eqref{eq:su3},
we have
\[
-\log \left(x_i + \bar{x}_id_i \right) + \log(x_i) \leq -\frac{\bar{x}_i}{x_i} d_i
+ \frac12 \left(\frac{\bar{x}_i}{x_i} d_i \right)^2
+ \frac{|d_i|^3}{6} \frac{2 - \beta}{(1-\beta)^2}.
\]
By summing this inequality over $i=1,2,\dotsc, n$, we obtain
\begin{align*}
&-\sum_{i=1}^n \log \left(x_i + \bar{x}_i d_i \right) + \sum_{i=1}^n \log(x_i) \\
&\leq -e^\top X^{-1} \bX d + \frac12 d^\top \bX X^{-2} \bX d
+ \sum_{i=1}^n \frac{|d_i|^3}{6} \frac{2 - \beta}{(1-\beta)^2} \\
&= -e^\top X^{-1} \bX d + \frac12 d^\top \bX X^{-2} \bX d
+ \frac{2 - \beta}{6(1-\beta)^2}\|d\|_3^3  \\
&\leq -e^\top X^{-1} \bX d + \frac12 d^\top \bX X^{-2} \bX d
+ \frac{2 - \beta}{6(1-\beta)^2}\|d\|^3,
\end{align*}
where $\| d \|_3$ denotes the $\ell_3$ norm of $d$. (The final inequality
follows from $\|d\|_3 \leq \|d\|_2$).
\end{proof}

\subsection{Proof of Lemma~\ref{lem:LSccgSOL}.} \label{app:proof.LSccgSOL}
\begin{proof}
For simplicity of notation, we again use $H = \bX_k \nabla^2 \phi_\mu(x^k) \bX_k$
and $g = \bX_k \nabla \phi_\mu(x^k)$ in the proof.

Suppose first that the unit step length $\alpha_k = 1$ is accepted.
Then, if $\|d^k\| < c_d \epsH$, it follows from Lemma
\ref{lem:shortdsmallKKT} that both (\ref{eq:epsgradpos}) and
(\ref{eq:epscompliment}) hold at $x^{k+1}$, so we are in case
A. Otherwise, the statment of case B holds by
\[
\alpha_k \|d^k\| = \|d^k\| \geq c_d \epsH \geq \csol \epsH.
\]

For the remainder of the proof, we assume that $\alpha_k < 1$.  Recall from
the statement of Lemma~\ref{lem:ccgsteps} that
\[
\gamma_k = \max\left\{\frac{\|X_k^{-1} \bX_k
  \hat{d}^k\|_\infty}{\beta}, 1 \right\}.
\]
For any $j \geq 0$ such that the sufficient decrease condition
(\ref{eq:lsdecreasedamped}) does not hold, we have from
(\ref{eq:f2smooth}), (\ref{eq:ccgstepSOLcurv}),
(\ref{eq:ccgstepSOLgradcond}), and Lemma~\ref{lem:logbound} that
\begin{align*}
& -\frac{\eta}{6} \theta^{3j} \|d^k\|^3 \\
&\leq \phi_\mu(x^k + \theta^j \bX_k d^k) - \phi_\mu(x^k)\\
&\leq \theta^j \nabla f(x^k)^\top \bX_k d^k
+ \frac{\theta^{2j}}{2} (d^k)^\top \bX_k \nabla^2 f(x^k) \bX_k d^k
+ \frac{L_H}{6} \theta^{3j}\|\bX_k d^k\|^3 && \text{by (\ref{eq:f2smooth})} \\
&\quad- \mu \theta^j e^\top X_k^{-1} \bX_k d^k
+ \frac{\mu \theta^{2j}}{2} (d^k)^\top \bX_k X_k^{-2} \bX_k d^k
+ \frac{\mu (2 - \beta)}{6(1-\beta)^2} \theta^{3j} \|d^k\|^3 && \text{by Lemma \ref{lem:logbound}} \\
&= \theta^j g^\top d^k
+ \frac{\theta^{2j}}{2} (d^k)^\top H d^k
+ \frac{L_H}{6} \theta^{3j}\|\bX_k d^k\|^3
+ \frac{\mu (2 - \beta)}{6(1-\beta)^2} \theta^{3j} \|d^k\|^3 \\
&= -\theta^j \gamma_k (d^k)^\top \left(H + 2\epsH I\right) d^k 
+ \frac{\theta^{2j}}{2} (d^k)^\top H d^k  && \text{by (\ref{eq:ccgstepSOLgradcond})} \\
&\quad+ \frac{L_H}{6} \theta^{3j}\|\bX_k d^k\|^3
+ \frac{\mu (2 - \beta)}{6(1-\beta)^2} \theta^{3j} \|d^k\|^3  \\
&= -\theta^j \left(\gamma_k - \frac{\theta^j}{2}\right)
(d^k)^\top \left(H + 2\epsH I \right) d^k - \theta^{2j} \epsH \|d^k\|^2 \\
&\quad+ \frac{L_H}{6} \theta^{3j}\|\bX_k d^k\|^3
+ \frac{\mu (2 - \beta)}{6(1-\beta)^2} \theta^{3j} \|d^k\|^3 \\
&\leq -\theta^j \gamma_k \epsH \|d^k\|^2
+ \frac12 \theta^{2j} \epsH \|d^k\|^2 - \theta^{2j} \epsH \|d^k\|^2
&& \text{by (\ref{eq:ccgstepSOLcurv})} \\
&\quad+ \frac{ L_H (1-\beta)^2 + (2 - \beta)}{6(1-\beta)^2} \theta^{3j}\|d^k\|^3 && \text{by $\mu<1$} \\
&\leq -\theta^j \gamma_k \epsH \|d^k\|^2
+ \frac{ L_H (1-\beta)^2 + (2 - \beta)}{6(1-\beta)^2} \theta^{3j}\|d^k\|^3.
\end{align*}
Therefore, for any $j \geq 0$ at which sufficient decrease is not
attained, we have by rearranging terms in the inequality above and
using the definition of $\gamma_k$ that
\begin{align} \nonumber
\frac{ (L_H + \eta)(1-\beta)^2 + (2 - \beta)}{6(1-\beta)^2} \theta^{2j}
& \geq \max\left\{\frac{\|X_k^{-1} \bX_k \hat{d}^k\|_\infty}{\beta}, 1\right\} \epsH \|d^k\|^{-1} \\
 \label{eq:sollseq1}
& \geq \epsH \|d^k\|^{-1}.
\end{align}
Evaluating this expression at $j = 0$, we have that
\begin{equation} \label{eq:sollseq2}
\|d^k\| \geq \frac{6(1-\beta)^2}{(L_H + \eta)(1-\beta)^2 + (2 - \beta)} \epsH.
\end{equation}
From (\ref{eq:ccgstepSOLnorm}), we have
\[
\|d^k\| \leq 1.1 \epsH^{-1} \|g\| \leq 1.1 \epsH^{-1} (\|\bX_k \nabla f(x^k)\| + \mu \|\bX_k X_k^{-1} e\|)
\leq 1.1 \epsH^{-1} (U_g + \mu \sqrt{n}),
\]
where we used $\|\bX_k\| \leq 1$, $\|\nabla f(x^k)\| \leq U_g$, and
$\|\bX_k X_k^{-1} e\| \leq \sqrt{n}$ in the final inequality. Thus,
for any $j > \jsol$ we have from definition (\ref{eq:ccglsitsSOL}) and
this bound on $\|d^k\|$ that
\begin{align*}
\theta^{2j} < \theta^{2\jsol} &\leq \frac{6(1-\beta)^2}
{(L_H + \eta)(1-\beta)^2 + (2 - \beta)} \frac{\epsH^2}{1.1(U_g + \mu \sqrt{n})} \\
&\leq \frac{6(1-\beta)^2 \epsH}{(L_H + \eta)(1-\beta)^2 + (2 - \beta)}\|d^k\|^{-1}.
\end{align*}
Therefore, (\ref{eq:sollseq1}) cannot be satisfied for any $j > \jsol$ so the line search
must terminate with $\alpha_k = \theta^{j_k}$ for some $1 \leq j_k \leq \jsol + 1$.
The previous index $j_k-1$ satisfies (\ref{eq:sollseq1}), so we  also have
\[
\theta^{2(j_k-1)} = \frac{\theta^{2j_k}}{\theta^2}
\geq \frac{6(1-\beta)^2  \epsH}
{(L_H + \eta)(1-\beta)^2 + (2 - \beta)}\|d^k\|^{-1}.
\]
It follows that
\begin{align*}
\alpha_k \|d^k\| = \theta^{j_k} \|d^k\|
&\geq \left(\frac{6(1-\beta)^2\theta^2  \epsH}
{(L_H + \eta)(1-\beta)^2 + (2 - \beta)}\right)^{1/2} \|d^k\|^{1/2} \\
&\geq \frac{6(1-\beta)^2\theta^2}
{(L_H + \eta)(1-\beta)^2 + (2 - \beta)} \epsH
\end{align*}
holds, where the final inequality comes from (\ref{eq:sollseq2}) and $\theta < 1$.
Thus, the conclusion holds in this case as well and the proof is complete.
\end{proof}

\subsection{Proof of Lemma~\ref{lem:LSccgNC}.} \label{app:proof.LSccgNC}
\begin{proof}
We again use the notation \\$H = \bX_k \nabla^2 \phi_\mu(x^k) \bX_k$ and
$g = \bX_k \nabla \phi_\mu(x^k)$ in this proof.

We begin by noting that when the unit step, $\alpha_k = 1$, is taken, we have
\[
\alpha_k \|d^k\| = \|d^k\| \geq \epsH,
\]
where the inequality follows from (\ref{eq:ccgstepNC}).

In the remainder of the proof, we assume that the unit step length is
not accepted. Then, for any $j \geq 0$ such that
(\ref{eq:lsdecreasedamped}) does not hold, we have from
(\ref{eq:f2smooth}) and (\ref{eq:ccgstepNC}) along with the result of
Lemma \ref{lem:logbound} that
\begin{align*}
& -\frac{\eta}{6}\theta^{3j} \|d^k\|^3 \\
&\leq \phi_\mu(x^k + \theta^j \bX_k d^k) - \phi_\mu (x^k) \\
&\leq \theta^j \nabla f(x^k)^\top \bX_k d^k
+ \frac{\theta^{2j}}{2} (d^k)^\top \bX_k \nabla^2 f(x^k) \bX_k d^k
+ \frac{L_H}{6} \theta^{3j}\|\bX_k d^k\|^3 && \text{by (\ref{eq:f2smooth})}\\
&\quad - \mu \theta^j e^\top X_k^{-1} \bX_k d^k
+ \frac{\mu \theta^{2j}}{2} (d^k)^\top \bX_k X_k^{-2} \bX_k d^k
+ \frac{\mu (2 - \beta)}{6(1-\beta)^2} \theta^{3j} \|d^k\|^3  && \text{by Lemma \ref{lem:logbound}}\\
&= \theta^j g^\top d^k + \frac{\theta^{2j}}{2} (d^k)^\top H d^k
+ \frac{L_H}{6} \theta^{3j}\|\bX_k d^k\|^3
+ \frac{\mu (2 - \beta)}{6(1-\beta)^2} \theta^{3j} \|d^k\|^3  \\
&\leq -\frac{\theta^{2j}}{2} \|d^k\|^3
+\frac{L_H (1-\beta)^2 + (2 - \beta)}{6(1-\beta)^2} \theta^{3j}\|d^k\|^3,
\quad \text{by (\ref{eq:ccgstepNC}) and $\mu < 1$.}
\end{align*}
By rearranging this
expression, we have for all such $j$ that
\[
\theta^j \geq \frac{3(1-\beta)^2}{(L_H + \eta) (1-\beta)^2 + (2 - \beta)}
\]
which is true only for $j \leq \jnc$. Thus, the line search must terminate
for some $j_k \leq \jnc + 1$. Since the line search failed to stop at
iteration $j_k-1$, we must have
\[
\theta^{j_k-1} = \frac{\theta^{j_k}}{\theta} \geq \frac{3(1-\beta)^2 }{(L_H + \eta) (1-\beta)^2 + (2 - \beta)}.
\]
Therefore, using $\|d^k\| \geq \epsH$ from (\ref{eq:ccgstepNC}), we have that
\[
\alpha_k \|d^k\| = \theta^{j_k} \|d^k\| \geq
\frac{3(1-\beta)^2 \theta}{(L_H + \eta) (1-\beta)^2 + (2 - \beta)} \epsH
\]
as required.
\end{proof}

\end{document}